\newcommand{\be}{\begin{equation}}
\newcommand{\ee}{\end{equation}}
\newcommand{\bea}{\begin{eqnarray}}
\newcommand{\eea}{\end{eqnarray}}
\newcommand{\beas}{\begin{eqnarray*}}
\newcommand{\eeas}{\end{eqnarray*}}
\newcommand{\bbD}{\mathbb D}
\newcommand{\bbE}{\mathbb E}
\newcommand{\bbH}{\mathbb H}
\newcommand{\bbL}{\mathbb L}
\newcommand{\bbP}{\mathbb P}
\newcommand{\bbR}{\mathbb R}
\newcommand{\bbS}{\mathbb S}
\newcommand{\E}[1]{\mathbb{E} \left[#1\right]}
\newcommand{\scF}{\mathcal F}
\newcommand{\scL}{\mathcal L}
\newcommand{\scN}{\mathcal N}
\newcommand{\scO}{\mathcal O}
\newcommand{\scS}{\mathcal S}
\DeclareMathOperator*{\esssup}{ess\,sup}
\newcommand{\ang}[1]{\ensuremath{ \left \langle #1 \right \rangle }}
\newcommand{\crl}[1]{\ensuremath{ \left\{ #1 \right\} }}
\newcommand{\brak}[1]{\ensuremath{\left( #1 \right)}}
\newcommand{\veps}{\varepsilon}
\newcommand{\half}{\frac{1}{2}}
\newcommand{\p}{\mathbb{P}}
\newcommand{\norm}[1]{\ensuremath{\left\| #1 \right\|}}
\newcommand{\abs}[1]{\ensuremath{\left| #1 \right|}}
\newtheorem{theorem}{Theorem}[section]
\newtheorem{definition}[theorem]{Definition}
\newtheorem{proposition}[theorem]{Proposition}
\newtheorem{corollary}[theorem]{Corollary}
\newtheorem{lemma}[theorem]{Lemma}
\newtheorem{remark}[theorem]{Remark}
\newtheorem{example}[theorem]{Example}
\newtheorem{examples}[theorem]{Examples}
\newtheorem{foo}[theorem]{Remarks}
\newenvironment{Example}{\begin{example}\rm}{\end{example}}
\newenvironment{Remark}{\begin{remark}\rm}{\end{remark}}
\title{BSDEs with terminal conditions that have\\ bounded Malliavin derivative}
\author{Patrick Cheridito\footnote{Supported by NSF Grant DMS-0642361.}\\
ORFE, Sherrerd Hall 204\\
Princeton University\\
Princeton, NJ 08544, USA
\and
Kihun Nam\\
PACM, Fine Hall 210\\
Princeton University\\
Princeton, NJ 08544, USA
}
\date{}
\begin{document}
\maketitle

\begin{abstract}
We show existence and uniqueness of solutions to BSDEs of the form
$$
Y_t = \xi + \int_t^T f(s,Y_s,Z_s)ds - \int_t^T Z_s dW_s$$
in the case where the terminal condition $\xi$ has bounded Malliavin derivative. 
The driver $f(s,y,z)$ is assumed to be Lipschitz continuous in $y$ but only locally Lipschitz continuous in 
$z$. In particular, it can grow arbitrarily fast in $z$. If in addition to having bounded 
Malliavin derivative, $\xi$ is bounded, the driver 
needs only be locally Lipschitz continuous in $y$. In the special case where the 
BSDE is Markovian, we obtain existence and uniqueness results for 
semilinear parabolic PDEs with non-Lipschitz nonlinearities.
We discuss the case where there is no lateral boundary as well as 
lateral boundary conditions of Dirichlet and Neumann type.\\[2mm]
{\bf Keywords:} Backward stochastic differential equation, Malliavin derivative, 
forward-backward stochastic differential equation,
semilinear parabolic PDE, Dirichlet boundary condition, Neumann boundary condition, 
viscosity solution.\\[2mm]
{\bf MSC 2010:} 60H07, 60H10, 35K58
\end{abstract}

\setcounter{equation}{0}
\section{Introduction}
\label{sec:intro}

We study BSDEs (backward stochastic differential equations) of the form
\begin{equation}\label{bsde}
Y_t = \xi + \int_t^T f(s,Y_s,Z_s)ds - \int_t^T Z_s dW_s,
\end{equation}
for an $n$-dimensional Brownian motion $W$ and a terminal condition
$\xi$ that is measurable with respect to the sigma-algebra generated by $W$.
If the driver $f(s,y,z)$ is Lipschitz in $(y,z)$, it can be shown with a Picard iteration argument that
\eqref{bsde} has a unique solution for any square-integrable terminal condition $\xi$; 
see Pardoux and Peng \cite{PP90}. Kobylanski \cite{Kobylanski} proved the existence
of a unique solution in the case where $f$ does not grow faster than quadratically in $z$ and $\xi$ is bounded.
BSDEs with drivers of quadratic growth in $z$ and unbounded terminal conditions have been studied by
Briand and Hu \cite{BH, BH2} as well as Delbaen et al. \cite{DHR}. Delbaen et al. \cite{DHB} showed that if the driver
$f$ only depends on $z$, is convex and has superquadratic growth, 
there exist bounded terminal conditions such that the BSDE
\eqref{bsde} has no solution with bounded $Y$, and if the BSDE admits a solution with bounded $Y$, 
it has infinitely many of them. Moreover, they proved the existence of a solution 
for Markovian BSDEs when the terminal value is a bounded continuous function of the
terminal value of a forward process. Richou \cite{Richou} derived the existence of solutions to more general
Markovian BSDEs in the case where $f$ and $\xi$ satisfy a local Lipschitz condition with respect to the underlying 
forward process. In Cheridito and Stadje \cite{CS} it is shown that BSDEs whose drivers are 
convex in $z$ have unique solutions with bounded $Z$ if $f$ and $\xi$ are Lipschitz continuous functionals
of the path of the underlying Brownian motion.

In this paper $f$ can grow arbitrarily fast in $z$, and we do not make Markov or convexity assumptions.
On the other hand, we require $f$ and $\xi$ to be Malliavin differentiable. Recently, Malliavin calculus 
has also been applied in the study of BSDEs by Hu et al. \cite{HNS} and Briand and Elie \cite{BE}.
In Section \ref{sec:general} we show that if $\xi$ has bounded Malliavin derivative, 
\eqref{bsde} has a unique solution for drivers $f$ that are Lipschitz in $y$ and locally Lipschitz in $z$.
If $\xi$ is also bounded, $f$ only needs to be locally Lipschitz in $y$.
In Section \ref{sec:Lipschitz} we show that every terminal condition that is Lipschitz in 
the underlying Brownian motion has a bounded Malliavin derivative. On the other 
hand, we give an example of a terminal condition with bounded Malliavin derivative 
that is not Lipschitz in the underlying Brownian motion. This shows that our condition is
weaker than Lipschitz continuity in the underlying Brownian motion.
In Sections \ref{sec:pde}--\ref{sec:pdeNeu} we generalize results on the relation between Markovian
BSDEs and semilinear parabolic PDEs to the case of non-Lipschitz nonlinearities. 
In Section \ref{sec:pde} we study Markovian BSDEs based on forward processes 
following standard diffusion dynamics and related PDEs for functions 
$u : [0,T] \times \mathbb{R}^m \to \mathbb{R}$. The general results of Section \ref{sec:general} 
allow us to extend findings of Amour and Ben-Artzi \cite{AB} and Gilding et al. \cite{Gilding} 
on the existence of solutions to nonlinear heat equations without lateral boundaries. 
Section \ref{sec:pdeDiri} is devoted to BSDEs with random terminal times and 
parabolic PDEs with lateral boundary conditions of Dirichlet type.
Finally, Section \ref{sec:pdeNeu} discusses BSDEs based on reflected forward processes and their 
relation to parabolic PDEs with lateral boundary conditions of Neumann type.

\setcounter{equation}{0}
\section{General BSDEs with terminal conditions that have bounded Malliavin derivative}
\label{sec:general}

Let $(W_t)_{0 \le t \le T}$ be an $n$-dimensional Brownian motion on a probability
space $(\Omega, {\cal F}, \p)$ and $\xi$ an ${\cal F}_T$-measurable random variable, where
$({\cal F}_t)_{0 \le t \le T}$ is the augmented filtration generated by $W$. The driver $f$ of the 
BSDE \eqref{bsde} is assumed to be a
function from $[0,T] \times \Omega \times \mathbb{R} \times \mathbb{R}^n$ to $\mathbb{R}$
that is measurable with respect to ${\cal P} \otimes {\cal B}(\mathbb{R}) \otimes {\cal B}(\mathbb{R}^n)$,
where ${\cal P}$ is the predictable sigma-algebra on $[0,T] \times \Omega$.
As usual, we identify random variables that are equal $\p$-almost surely and accordingly,
understand equalities and inequalities between them in the
$\p$-almost sure sense. The Euclidean norm on $\mathbb{R}^d$ is denoted
by $|.|$, and $xy$ stands for $\sum_{i=1}^d x_i y_i$, $x,y \in \mathbb{R}^d$.
We work with the following

\begin{definition}
A solution of the BSDE \eqref{bsde} is a pair $(Y_t,Z_t)_{0 \leq t \leq T}$
of predictable processes taking values in $\mathbb{R} \times \mathbb{R}^n$
such that $\int_0^T \left(|f(t,Y_t,Z_t)| + |Z_t|^2\right) dt < \infty$ and $\eqref{bsde}$ holds for all $0 \le t \le T$.
\end{definition}

\noindent
For $p \in [1,\infty]$, we denote
\begin{itemize}
\item $\mathbb{S}^p(\bbR^{d}) :=$ the space of $\mathbb{R}^d$-valued continuous 
adapted processes $X$ satisfying
\begin{align*}
\|X\|_{\mathbb{S}^p}: &= \norm{\sup_{0 \le t \le T}|X_{t}|}_{L^p}
<\infty
\end{align*}
where processes $X,Y$ are identified if $\|X-Y\|_{\mathbb{S}^p} = 0$.

\item $\bbH^{p}(\bbR^{d}) :=$ the space of $\mathbb{R}^d$-valued predictable processes
$X$ satisfying
\beas
&& \|X\|_{\mathbb{H}^p} := \norm{\brak{\int_{0}^{T} |X_{t}|^2
dt}^{1/2}}_{L^p} <\infty \quad \mbox{if } p<\infty \mbox{ and }\\
&& \|X\|_{\mathbb{H}^\infty} := \esssup_{(t,\omega)\in[0,T]\times\Omega} |X_{t}(\omega)|<\infty
\quad \mbox{if } p = \infty,
\eeas
where processes $X,Y$ are identified if $\|X-Y\|_{\mathbb{H}^p} = 0$.
\end{itemize}

\noindent
$(f,\xi)$ are said to be $p$-standard parameters if they satisfy the following three conditions:
\begin{itemize}
\item[(S1)] $\xi \in L^p({\cal F}_T)$
\item[(S2)]
$|f(t,y,z) - f(t,y',z')| \le L( |y-y'| + |z-z'|)$ for a constant $L \in \mathbb{R}_+$
\item[(S3)] $f(.,0,0) \in \mathbb{H}^p(\mathbb{R})$.
\end{itemize}

It is shown in Section 5 of El Karoui et al. \cite{ElKaroui} that for all $p \in (1, \infty)$, 
a BSDE of the form \eqref{bsde} with p-standard parameters has a unique solution 
$(Y,Z) \in \mathbb{S}^p(\bbR) \times \bbH^p(\bbR^n)$.

We recall that ${\cal H} := L^2([0,T]; \mathbb{R}^n)$ is a Hilbert space with
scalar product $\ang{h_1, h_2} := \int_0^T h_1(t)h_2(t) dt$, and the mapping
$h \mapsto \int_0^T h(t) dW_t$ is a Hilbert space isomorphism between
${\cal H}$ and the first Wiener chaos of $W$. The corresponding Malliavin derivative
of a Malliavin differentiable random variable $\xi$ is an $n$-dimensional
stochastic process $D_t \xi$, $0 \le t \le T$, whose components we denote by $D^i_t \xi$, $i =1, \dots, n$. 
The Sobolev space $\mathbb{D}^{1,2}$ is defined as the closure of the class of smooth random variables $\xi$ with
respect to the norm $\|\xi\|_{1,2} := \brak{\E{\xi^2 + \int_0^T |D_t \xi|^2 dt}}^{1/2}$; see Nualart \cite{Nualart}.
$\bbL^{1,2}_a(\bbR^{d})$ denotes the space of $\bbR^{d}$-valued progressively
measurable processes $X$  satisfying
\begin{itemize}
\item[{\rm (i)}] $X_t \in (\bbD^{1,2})^d$ for almost all $t$
\item[{\rm (ii)}] $(t,\omega) \mapsto DX_t(\omega) \in (L^2[0,T])^{d \times n}$ admits a 
progressively measurable version
\item[{\rm (iii)}]
$\|X\|_{\mathbb{L}^{1,2}_a}^2 := \|X\|_{\mathbb{H}^2} 
+ \norm{\brak{\int_{0}^{T}\int_{0}^{T}|D_r X_t|^2 dr dt}^{1/2}}_{L^2} < \infty,$
\end{itemize}
where processes $X,Y$ are identified if $\|X-Y\|_{\mathbb{L}^{1,2}_a} = 0$.

Now consider the conditions:

\begin{itemize}\label{hypo}
\item[(A1)] The terminal condition $\xi$ is in $\mathbb{D}^{1,2}$ and
there exist constants $A_i \in \mathbb{R}_+$ such that
$|D^i_t \xi| \le A_i$ $dt \otimes d\p \text{-a.e.}$ for all $i=1, \dots, n$.

\item[(A2)] 
There exist a constant $B \in \mathbb{R}_+$ and a nondecreasing function 
$\rho : \mathbb{R}_+ \to \mathbb{R}_+$ such that
$$
|f(t,y,z)-f(t,y',z)| \le B |y-y'| \quad \mbox{and} \quad |f(t,y,z)-f(t,y,z')|  \le \rho(|z| \vee |z'|) |z-z'|
$$
for all $t \in[0,T]$, $y,y' \in \mathbb{R}$ and $z,z' \in \mathbb{R}^n$.

\item[(A3)] $f(.,0,0) \in \mathbb{H}^4(\mathbb{R})$ and there exist
Borel-measurable functions $q_i : [0,T]\rightarrow \bbR_+$ satisfying $\int_{0}^{T} q^2_i(t) dt <\infty$
such that for every pair $(y,z) \in \mathbb{R} \times \mathbb{R}^n$ with
$$
|z| \le Q := \sqrt{\sum_{i=1}^n \brak{A_i + \int_0^T q_i(t) e^{-B(T-t)} dt}^2} \; e^{BT},
$$
one has $f(\cdot,y,z) \in \bbL^{1,2}_a(\bbR)$ and
$|D^i_r f(t,y,z)| \le q_i(t)$ $dr \otimes d\p \text{-a.e.}$ for all $i =1, \dots, n$.

\item[(A4)] 
For a.a. $r \in[0,T]$, there exists a non-negative process $K_{r.}$ in $\mathbb{H}^4(\mathbb{R})$
such that 
$$
\int_0^T \|K_{r.}\|_{\mathbb{H}^4}^4 dr < \infty \quad \mbox{and} \quad
|D_r f(t,y,z) - D_r f(t,y',z')| \le K_{rt}(|y-y'|+|z-z'|)
$$
for all $t \in [0,T]$, $y,y' \in \mathbb{R}$ and $z,z'\in \mathbb{R}^n$ satisfying $|z|,|z'|\le Q$.
\end{itemize}

Then one has the following

\begin{theorem} \label{thmmain}
If {\rm (A1)--(A4)} hold, then the BSDE \eqref{bsde} has a unique solution $(Y,Z)$ 
in $\mathbb{S}^4(\mathbb{R}) \times \mathbb{H}^{\infty}(\mathbb{R}^n)$, and
for all $i=1, \dots, n$,
$$
|Z^i_t| \le \left(A_i+ \int_t^T q_i(s) e^{-B(T-s)} ds \right) e^{B(T-t)} \quad dt \otimes d\mathbb{P}\mbox{-a.e.}
$$
\end{theorem}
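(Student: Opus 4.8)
The plan is to neutralise the local Lipschitz growth of $f$ in $z$ by a truncation at level $Q$, to solve the resulting globally Lipschitz BSDE, and then to use Malliavin calculus to show \emph{a posteriori} that the $Z$-component of that solution never leaves the ball $\{|z|\le Q\}$, so that the truncation was irrelevant. Concretely, let $\theta_Q\colon\bbR^n\to\{|z|\le Q\}$ be the (1-Lipschitz) metric projection and set $f_Q(t,y,z):=f(t,y,\theta_Q(z))$. By (A2), $f_Q$ is Lipschitz in $y$ with constant $B$ and \emph{globally} Lipschitz in $z$ with constant $\rho(Q)$, and $f_Q(\cdot,0,0)=f(\cdot,0,0)\in\mathbb{H}^4$. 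Since $\xi\in\mathbb{D}^{1,2}$ has a bounded Malliavin derivative, the Clark--Ocone representation $\xi=\E{\xi}+\int_0^T\E{D_t\xi\mid\scF_t}\,dW_t$ has a bounded integrand, hence $\xi\in L^p$ for every $p<\infty$; in particular $(f_Q,\xi)$ are $4$-standard parameters, so by El Karoui et al.\ \cite{ElKaroui} the BSDE with driver $f_Q$ has a unique solution $(Y^Q,Z^Q)\in\mathbb{S}^4(\bbR)\times\mathbb{H}^4(\bbR^n)$.

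Next I would show $(Y^Q,Z^Q)$ is Malliavin differentiable. Since $\theta_Q$ takes values in $\{|z|\le Q\}$, the truncated driver $f_Q$ inherits the bounds of (A3) and (A4) for \emph{all} $z\in\bbR^n$, with the same $q_i$ and the same kernel $K$. Mollifying $f_Q$ in $(y,z)$ preserves the Lipschitz constants and, as such mollification commutes with $D_r$, also preserves these Malliavin bounds; so one may reduce to $f_Q$ smooth in $(y,z)$ and invoke the Malliavin differentiability theory for Lipschitz BSDEs (El Karoui et al.\ \cite{ElKaroui}, Nualart \cite{Nualart}): $(Y^Q,Z^Q)\in\mathbb{L}^{1,2}_a\times(\mathbb{L}^{1,2}_a)^n$, a version of $Z^Q$ is given by $D_tY^Q_t$, and for each $i$ and a.a.\ $r$ the pair $(D^i_rY^Q,D^i_rZ^Q)$ solves on $[r,T]$ the linear BSDE
\begin{equation*}
D^i_rY^Q_t=D^i_r\xi+\int_t^T\edg{(D^i_rf_Q)(s,Y^Q_s,Z^Q_s)+\partial_yf_Q(s,\cdot)\,D^i_rY^Q_s+\nabla_zf_Q(s,\cdot)\cdot D^i_rZ^Q_s}\,ds-\int_t^TD^i_rZ^Q_s\,dW_s,
\end{equation*}
with $D^i_rY^Q_t=0$ for $t<r$. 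Checking that $DY^Q\in L^2(dr\,d\p\,dt)$, so that this representation and the chain rule are justified, is where $f(\cdot,0,0)\in\mathbb{H}^4$, the $\mathbb{H}^4$-integrability of $K$ in (A4), and $(Y^Q,Z^Q)\in\mathbb{S}^4\times\mathbb{H}^4$ enter (via Cauchy--Schwarz on the cross terms $K_{rs}(|Y^Q_s|+|Z^Q_s|)$); I expect this, together with the mollification/limit passage, to be the main technical point.

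Now I would read off the estimate. In the linear BSDE the $y$-coefficient is bounded by $B$, the $z$-coefficient $\nabla_zf_Q$ by $\rho(Q)$ --- so Girsanov's theorem applies without any growth condition --- the source term is bounded by $q_i(s)$ and the terminal value by $A_i$. Solving the linear BSDE under the measure that removes the $z$-coefficient gives, for $t\ge r$,
\begin{equation*}
\abs{D^i_rY^Q_t}\le e^{B(T-t)}A_i+\int_t^Te^{B(s-t)}q_i(s)\,ds=\brak{A_i+\int_t^Tq_i(s)e^{-B(T-s)}\,ds}e^{B(T-t)}.
\end{equation*}
Taking $r=t$ and using $Z^{Q,i}_t=D^i_tY^Q_t$ yields exactly the asserted bound on $\abs{Z^{Q,i}_t}$, and summing the squares over $i=1,\dots,n$ gives $\abs{Z^Q_t}\le Q$ $dt\otimes d\p$-a.e. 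Consequently $\theta_Q(Z^Q_t)=Z^Q_t$, so $f_Q(t,Y^Q_t,Z^Q_t)=f(t,Y^Q_t,Z^Q_t)$ and $(Y^Q,Z^Q)$ is a solution of \eqref{bsde} lying in $\mathbb{S}^4(\bbR)\times\mathbb{H}^\infty(\bbR^n)$ and satisfying the stated inequality.

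It remains to argue uniqueness. If $(Y,Z)$ is any solution of \eqref{bsde} in $\mathbb{S}^4(\bbR)\times\mathbb{H}^\infty(\bbR^n)$, choose $M\ge Q$ with $\norm{Z}_{\mathbb{H}^\infty}\le M$; then $\theta_M(Z_t)=Z_t$, so $(Y,Z)$ also solves the BSDE with the globally Lipschitz driver $f_M$. The latter has $2$-standard parameters and hence a unique solution in $\mathbb{S}^2(\bbR)\times\mathbb{H}^2(\bbR^n)$; since both $(Y,Z)$ and the solution constructed above belong to $\mathbb{S}^4\times\mathbb{H}^\infty\subset\mathbb{S}^2\times\mathbb{H}^2$, they coincide.
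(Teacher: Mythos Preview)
Your proof is correct and follows essentially the same strategy as the paper: truncate $f$ in $z$ at level $Q$, mollify in $(y,z)$ to apply the Malliavin differentiability results of El Karoui et al.\ \cite{ElKaroui}, obtain the linear BSDE for $D^i_r Y$, deduce the bound on $Z$, and remove the truncation. The two minor differences are that you estimate $|D^i_r Y_t|$ via Girsanov and the explicit linear-BSDE representation whereas the paper sandwiches it between two deterministic comparison BSDEs, and that the paper spells out the mollification limit you flag as ``the main technical point'' by applying the $\mathbb{S}^2\times\mathbb{H}^2$ stability estimate (Proposition~5.1 of \cite{ElKaroui}) on successive subintervals of length $\mu$ to transfer the pointwise bound from $Z^m$ to $Z$.
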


\begin{Remark} \label{rmk1} 
If for a.a. $r \in [0,T]$, the process 
$K_{r.}$ in (A4) is bounded, the condition $f(.,0,0) \in \mathbb{H}^4(\mathbb{R})$ can be dropped from
(A3). Then the statement of Theorem \ref{thmmain} still holds, except that 
$Y$ is in $\mathbb{S}^2(\mathbb{R})$ instead of $\mathbb{S}^4(\mathbb{R})$.
This is due to the fact that in this case, $f(.,0,0) \in \mathbb{H}^4(\mathbb{R})$
is not needed in Proposition \ref{propsandwich} below; see Remark \ref{rmk2}.
\end{Remark}

We first prove Theorem \ref{thmmain} under the following 
stronger versions of conditions (A2)--(A4):

\begin{itemize}
\item[(A2')] $f(t,y,z)$ is continuously differentiable in $(y,z)$ and
there exist constants $B, \rho \in \mathbb{R}_+$ such that 
$$
|\partial_{y}f(t,y,z)| \le B, \quad |\partial_{z}f(t,y,z)| \le \rho
$$
for all $t \in[0,T]$, $y \in \mathbb{R}$ and $z \in \mathbb{R}^n$.
\item[(A3')]
Condition (A3) holds for all $(y,z) \in \mathbb{R} \times \mathbb{R}^n$.
\item[(A4')]
Condition (A4) holds for all $t \in[0,T]$, $y,y' \in \mathbb{R}$ and $z,z' \in \mathbb{R}^n$.
\end{itemize}

\begin{proposition} \label{propsandwich}
If {\rm (A1), (A2'), (A3'), (A4')} hold, then
the BSDE \eqref{bsde} has a unique solution $(Y,Z)$ in 
$\mathbb{S}^4(\mathbb{R}) \times \mathbb{H}^4(\mathbb{R}^n)$, and 
\be \label{Zbound}
|Z^i_t| \le \left(A_i+ \int_t^T q_i(s) e^{-B(T-s)} ds \right)e^{B(T-t)} \quad dt \otimes d\p
\mbox{-a.e.}
\ee
\end{proposition}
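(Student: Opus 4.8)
The plan is to produce $(Y,Z)$ from the classical $L^p$-theory for Lipschitz BSDEs, to upgrade it to a Malliavin differentiable solution, and then to read the bound \eqref{Zbound} off the linear BSDE satisfied by $DY$. Under (A2$'$) the driver is globally Lipschitz in $(y,z)$ with constant $B\vee\rho$; by (A1), $\xi\in\mathbb{D}^{1,2}$ has an a.e.\ bounded Malliavin derivative, so by Gaussian concentration (equivalently, the Clark--Ocone representation $\xi=\E{\xi}+\int_0^T\E{D_t\xi\mid\mathcal{F}_t}\,dW_t$ combined with the Burkholder--Davis--Gundy inequality) $\xi\in L^p(\mathcal{F}_T)$ for every $p<\infty$, in particular $\xi\in L^4(\mathcal{F}_T)$. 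Together with $f(\cdot,0,0)\in\mathbb{H}^4(\mathbb{R})$ from (A3$'$), $(f,\xi)$ are $4$-standard parameters, so by Section~5 of \cite{ElKaroui} the BSDE \eqref{bsde} has a unique solution $(Y,Z)\in\mathbb{S}^4(\mathbb{R})\times\mathbb{H}^4(\mathbb{R}^n)$.

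Next I would check that the hypotheses of the Malliavin differentiability theorem for Lipschitz BSDEs (see, e.g., Proposition~5.3 of \cite{ElKaroui}, or \cite{Nualart}) hold under (A1), (A2$'$), (A3$'$), (A4$'$): $\xi\in\mathbb{D}^{1,2}$; $f(\cdot,y,z)\in\mathbb{L}^{1,2}_a(\mathbb{R})$ for all $(y,z)$ and $f\in C^1$ in $(y,z)$ with $|\partial_yf|\le B$, $|\partial_zf|\le\rho$; $D_rf(t,\cdot,\cdot)$ uniformly Lipschitz with a Lipschitz process in the required space (from (A4$'$)); and $\int_0^T\E{|D_r\xi|^2+\brak{\int_r^T|D_rf(s,Y_s,Z_s)|\,ds}^2}dr<\infty$, which follows from $|D_r^i\xi|\le A_i$ and $|D_r^if(s,Y_s,Z_s)|\le q_i(s)$ with $\int_0^Tq_i^2(s)\,ds<\infty$ --- bounds that hold for \emph{all} $(y,z)$ by (A3$'$). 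This gives $(Y,Z)\in\mathbb{L}^{1,2}_a(\mathbb{R})\times(\mathbb{L}^{1,2}_a(\mathbb{R}))^n$, a version of $(D_rY_t,D_rZ_t)$ with $D_rY_t=0$ for $t<r$ and $\crl{D_tY_t:0\le t\le T}$ a version of $\crl{Z_t:0\le t\le T}$, and, for each $i$ and a.e.\ $r$, the linear BSDE
\be\label{mallbsde}
D_r^iY_t=D_r^i\xi+\int_t^T\brak{(D_r^if)(s,Y_s,Z_s)+\partial_yf(s,Y_s,Z_s)D_r^iY_s+\partial_zf(s,Y_s,Z_s)D_r^iZ_s}\,ds-\int_t^TD_r^iZ_s\,dW_s
\ee
for $r\le t\le T$, where $(D_r^if)(s,y,z)$ denotes the $i$-th Malliavin derivative of $\omega\mapsto f(s,\omega,y,z)$.

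To conclude, I would solve \eqref{mallbsde} explicitly. Since $|\partial_yf|\le B$ and $|\partial_zf|\le\rho$ are bounded, $N:=\mathcal{E}\brak{\int_0^{\cdot}\partial_zf(u,Y_u,Z_u)\,dW_u}$ is a true martingale, and under $\q$ given by $d\q/d\p=N_T$ the representation formula for linear BSDEs yields, for $r\le t$,
\be\label{mallrep}
D_r^iY_t=\bbE^{\q}\edg{e^{\int_t^T\partial_yf(s,Y_s,Z_s)\,ds}D_r^i\xi+\int_t^Te^{\int_t^s\partial_yf(u,Y_u,Z_u)\,du}(D_r^if)(s,Y_s,Z_s)\,ds\ \Big|\ \mathcal{F}_t}.
\ee
Using $|\partial_yf|\le B$, $|D_r^i\xi|\le A_i$ from (A1), and $|(D_r^if)(s,Y_s,Z_s)|\le q_i(s)$ from (A3$'$), we obtain
\eq{|D_r^iY_t|\le e^{B(T-t)}A_i+\int_t^Te^{B(s-t)}q_i(s)\,ds=\brak{A_i+\int_t^Tq_i(s)e^{-B(T-s)}\,ds}e^{B(T-t)}.}
Taking $r=t$ and using that $D_tY_t$ is a version of $Z_t$ gives \eqref{Zbound}; in particular $Z\in\mathbb{H}^{\infty}(\mathbb{R}^n)\subset\mathbb{H}^4(\mathbb{R}^n)$, consistent with the first step.

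I expect the Malliavin differentiability step to be the main obstacle: one must justify differentiating under the stochastic integral, apply the chain rule to the composition $s\mapsto f(s,\cdot,Y_s,Z_s)$, and verify that $DY,DZ$ land in the spaces making \eqref{mallbsde} and the identification $Z_t=D_tY_t$ legitimate --- this is exactly what the $\mathbb{H}^4$-type bounds in (A3)--(A4) are designed to control (cf.\ Remark~\ref{rmk1}). The estimate in Step~3 is then routine; the structural point is that (A2$'$) makes $\partial_yf,\partial_zf$ \emph{globally} bounded and (A3$'$) makes the Malliavin derivative bound on $f$ hold for \emph{all} $(y,z)$, so that no a priori bound on $|Z|$ is needed. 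That circularity appears only when Theorem~\ref{thmmain} is deduced from this proposition, where it is broken by truncating $f$ in $z$ at the level $Q$.
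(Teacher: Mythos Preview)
Your proposal is correct and follows the paper's proof almost exactly: obtain $\xi\in L^p$ from Clark--Ocone and BDG, invoke Theorem~5.1 and Proposition~5.3 of \cite{ElKaroui} to get existence/uniqueness in $\mathbb{S}^4\times\mathbb{H}^4$ together with Malliavin differentiability and the linear BSDE \eqref{mallbsde} for $(D^i_rY,D^i_rZ)$, and then bound $D^i_rY_t$ using $|\partial_yf|\le B$, $|D^i_r\xi|\le A_i$, $|D^i_rf|\le q_i$. The only difference is in how you extract the bound from the linear BSDE: the paper sandwiches $D^i_rY_t$ between the solutions of the two deterministic BSDEs $\overline U_t=A_i+\int_t^T(B|\overline U_s|+\rho|\overline V_s|+q_i(s))\,ds-\int_t^T\overline V_s\,dW_s$ and its negative via the comparison theorem, whereas you use the explicit representation formula for linear BSDEs under the Girsanov shift by $\partial_zf$. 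Both routes are standard and yield the identical estimate; yours is slightly more direct (no auxiliary BSDEs to introduce), while the paper's avoids the measure change and the need to check that the Dol\'eans exponential is a true martingale (trivial here since $|\partial_zf|\le\rho$).
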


\begin{proof}
By Lemma \ref{lemmalp} below, condition (A1) implies $\mathbb{E}|\xi|^p < \infty$ for all $p \in \mathbb{R}_+$.
So it follows from Theorem 5.1 and Proposition 5.3 of El Karoui et al. \cite{ElKaroui} that the 
BSDE \eqref{bsde} has a unique 
solution $(Y,Z)$ in $\mathbb{S}^4(\mathbb{R}) \times \mathbb{H}^4(\mathbb{R}^n)$. Moreover,
$(Y,Z) \in \mathbb{L}^{1,2}_a(\mathbb{R}^{n+1})$, and for fixed $i=1, \dots, n$, 
$$(D^i_r Y_t, D^i_r Z_t) = (U^r_t, V^r_t) \; \; dr \otimes dt \otimes d\p\mbox{-a.e.} \quad \mbox{and} \quad
Z^i_t = U^t_t \; \; dt \otimes d\p\mbox{-a.e.},
$$ 
where 
$$
U^r_t = 0, \; V^r_t = 0, \quad 0 \le t < r \le T,
$$
and for each fixed $r$, $(U^r_t,V^r_t)_{r \le t \le T}$ is the unique pair in 
$\mathbb{S}^2(\mathbb{R}) \times \mathbb{H}^2(\mathbb{R}^n)$ solving the BSDE
\be \label{bsdedydz}
U^r_t= D_r^i \xi + \int_t^T [\partial_y f(s,Y_s,Z_s)
U^r_s + \partial_z f(s,Y_s,Z_s) V^r_s + D_r^i f(s,Y_s,Z_s)]ds -\int_t^T V^r_s dW_s.
\ee
Since \eqref{bsdedydz} and the two BSDEs 
\begin{align}
\label{bsde1}
& \overline{U}_t = A_i+\int_t^T \left(B|\overline{U}_s|+ \rho |\overline{V}_s|+ q_i(s) \right)ds
-\int_{t}^{T} \overline{V}_s dW_{s}\\
\label{bsde2}
& \underline{U}_t =  - A_i - \int_{t}^{T}\left(B|\underline{U}_s|+ \rho |\underline{V}_s|
+ q_i(s) \right)ds - \int_{t}^{T} \underline{V}_s dW_{s}
\end{align}
have 2-standard parameters, one obtains from the comparison result, Theorem 2.2 in 
El Karoui et al. \cite{ElKaroui}, that $\underline{U}_t \le U^r_t \le \overline{U}_t$ for all $t \in [0,T]$. 
But the solutions to \eqref{bsde1} and \eqref{bsde2} are given by
$$
\overline{U}_t = - \underline{U}_t =
\brak{A_i+\int_{t}^{T} q_i(s) e^{-B(T-s)}ds} e^{B(T-t)}, \quad \overline{V}_t = \underline{V}_t =0.
$$
This shows \eqref{Zbound}.
\end{proof}

\begin{lemma} \label{lemmalp}
If $\xi$ satisfies {\rm (A1)}, then $\mathbb{E} |\xi|^p < \infty$ for all $p \in [1, \infty)$.
\end{lemma}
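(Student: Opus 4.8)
The plan is to deduce from (A1) that $\xi$ has a Gaussian tail, which immediately gives finiteness of all its polynomial moments. The two ingredients are the Clark--Ocone representation of $\xi$ and the standard estimate for exponential martingales.

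First I would set $A := \bigl(\sum_{i=1}^n A_i^2\bigr)^{1/2}$, so that (A1) yields $|D_t\xi|^2 = \sum_{i=1}^n |D^i_t\xi|^2 \le A^2$ for $dt\otimes d\p$-a.e.\ $(t,\omega)$. Since $\xi\in\bbD^{1,2}\subset L^2(\scF_T)$, the Clark--Ocone formula (see Nualart \cite{Nualart}) applies and gives
$$
\xi = \E{\xi} + \int_0^T \psi_t\,dW_t, \qquad \psi_t := \E{D_t\xi \mid \scF_t},
$$
and, by Jensen's inequality applied componentwise, $|\psi_t|^2 \le \E{|D_t\xi|^2 \mid \scF_t} \le A^2$ for a.e.\ $t$. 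Hence the continuous martingale $M_t := \E{\xi\mid\scF_t}$ satisfies $M_0 = \E{\xi}$, $M_T = \xi$, and $\ang{M}_T = \int_0^T|\psi_t|^2\,dt \le A^2T$, a \emph{deterministic} bound.

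Next, for fixed $\lambda\in\bbR$ consider $\scE_t := \exp\!\bigl(\lambda M_t - \tfrac{\lambda^2}{2}\ang{M}_t\bigr)$. Because $\ang{M}_T\le A^2T$ is bounded, Novikov's condition holds (indeed $\E{\exp(\tfrac{\lambda^2}{2}\ang{M}_T)}\le e^{\lambda^2 A^2 T/2}<\infty$), so $(\scE_t)_{0\le t\le T}$ is a true martingale and $\E{\scE_T} = \scE_0 = e^{\lambda\E{\xi}}$. Writing $e^{\lambda\xi} = \scE_T\, e^{\frac{\lambda^2}{2}\ang{M}_T}$ and using $\ang{M}_T\le A^2T$ once more,
$$
\E{e^{\lambda\xi}} \le e^{\frac{\lambda^2}{2}A^2T}\,\E{\scE_T} = \exp\!\Bigl(\tfrac{\lambda^2}{2}A^2T + \lambda\,\E{\xi}\Bigr) < \infty .
$$
Applying this with $\lambda$ and with $-\lambda$ and adding shows $\E{e^{\lambda|\xi|}}<\infty$ for every $\lambda\ge 0$; since $|x|^p\le c_p\,e^{|x|}$ for a suitable constant $c_p$ and every $p\in[1,\infty)$, it follows that $\E{|\xi|^p}<\infty$ for all $p$, which is the assertion.

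I do not expect a genuine obstacle. The only points meriting a word of care are that Clark--Ocone is being invoked for a merely $\bbD^{1,2}$ random variable (which is precisely the generality in which it holds) and that $\scE$ is a true, not just local, martingale --- and it is exactly the deterministic bound $\ang{M}_T\le A^2T$ that guarantees this. As an alternative one could quote directly the Gaussian concentration inequality for functionals with bounded Malliavin derivative, which gives $\p\bigl(|\xi-\E{\xi}|>t\bigr)\le 2\exp\bigl(-t^2/(2A^2T)\bigr)$ and hence the same conclusion; the martingale route above has the advantage of being self-contained given the tools already cited.
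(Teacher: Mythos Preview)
Your proof is correct. Both you and the paper start from the Clark--Ocone representation $\xi=\E{\xi}+\int_0^T\psi_t\,dW_t$ with $|\psi_t|\le A$; from there the paper simply applies the Burkholder--Davis--Gundy inequality to the martingale $M_t=\int_0^t\psi_s\,dW_s$, obtaining $\E{\sup_t|M_t|^p}\le c_p\,\E{(\int_0^T|\psi_t|^2\,dt)^{p/2}}\le c_p(A^2T)^{p/2}$, which is the conclusion in one line. You instead exploit the deterministic bound $\ang{M}_T\le A^2T$ via Novikov to show that the exponential $\scE_t=\exp(\lambda M_t-\tfrac{\lambda^2}{2}\ang{M}_t)$ is a true martingale, and deduce $\E{e^{\lambda\xi}}<\infty$ for every $\lambda$. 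Your route is slightly longer but proves strictly more---sub-Gaussian tails and finite exponential moments rather than just polynomial moments---while the paper's BDG step is the minimal tool for the stated lemma. Either argument is entirely adequate here.
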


\begin{proof}
If $\xi$ satisfies (A1), it is square-integrable.
By the Clark--Ocone formula, one can represent $\xi$ as 
$\xi= \E{\xi} + \int_0^T \E{D_t \xi | \scF_t} dW_t$. Applying the
Burkholder--Davis--Gundy inequality to the martingale $M_t = \int_0^t \E{D_t \xi | \scF_s} dW_s$, 
one obtains a constant $c_p \in \mathbb{R}_+$ such that
$$
\E{\sup_{0 \le t \le T} |M_t|^p} \le c_p \E{\brak{\int_0^T |\E{D_t\xi|\scF_t}|^2 dt}^{p/2}} < \infty,
$$
which proves the lemma.
\end{proof}

\begin{Remark} \label{rmk2}
If for a.a. $r \in [0,T]$, the process $K_{r.}$ in (A4') is bounded, 
Proposition \ref{propsandwich} still holds if the condition $f(.,0,0)\in \bbH^4(\bbR)$
is dropped from (A3') except that then,
$(Y,Z)$ is in $\mathbb{S}^2(\mathbb{R}) \times \mathbb{H}^2(\mathbb{R}^n)$
and not necessarily in $\mathbb{S}^4(\mathbb{R}) \times \mathbb{H}^4(\mathbb{R}^n)$.
This is true because in this case, the proof of Proposition 5.3 in El Karoui et al. \cite{ElKaroui} still works
without the assumption $f(.,0,0) \in \bbH^4(\bbR)$ with the difference that it yields a solution 
$(Y,Z)$ of the BSDE \eqref{bsde} in $\mathbb{S}^2(\mathbb{R}) \times \mathbb{H}^2(\mathbb{R}^n)$
instead of $\mathbb{S}^4(\mathbb{R}) \times \mathbb{H}^4(\mathbb{R}^n)$.
\end{Remark}

To derive Theorem \ref{thmmain} from Proposition \ref{propsandwich}, we need the following result,
which is Proposition 5.1 of El Karoui et al. \cite{ElKaroui} in the special case of 
a Brownian filtration and $p=2$.

\begin{proposition} {\bf (El Karoui et al., 1997)} \label{propdiff}
For every $L \in \mathbb{R}_+$ there exist constants $\mu,\nu > 0$ satisfying the following:
If $T \le \mu$, then for all $2$-standard parameters $(f^i,\xi^i)$, $i=1,2$, such that 
$f^1$ fulfills the Lipschitz condition {\rm (S2)} with Lipschitz constant $L$, 
the BSDE solutions $(Y^i,Z^i)$ corresponding to $(f^i, \xi^i)$ satisfy
\beas
\norm{Y^1-Y^2}_{\bbS^2}^2 + \norm{Z^1-Z^2}_{\bbH^2}^2 \le \nu \, 
\E{\abs{\xi^1-\xi^2}^2 + \int_0^T \brak{f^1(t,Y^2_t,Z^2_t) - f^2(t,Y^2_t,Z^2_t)}^2dt}.
\eeas
\end{proposition}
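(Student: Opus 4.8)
The plan is to prove the estimate by the classical a priori computation for Lipschitz BSDEs, based on \Ito's formula, Gronwall's inequality and the Burkholder--Davis--Gundy (Doob) inequality, the only subtlety being that the difference of the two drivers must be linearised using \emph{only} the Lipschitz constant $L$ of $f^1$, so that the resulting constant $\nu$ does not see the (possibly enormous) Lipschitz constant of $f^2$. Write $\delta Y := Y^1-Y^2$, $\delta Z := Z^1-Z^2$, $\delta\xi := \xi^1-\xi^2$, and $g_s := f^1(s,Y^2_s,Z^2_s) - f^2(s,Y^2_s,Z^2_s)$; note that $g \in \bbH^2(\bbR)$ because $(Y^2,Z^2)\in\bbS^2(\bbR)\times\bbH^2(\bbR^n)$ and each $f^i(\cdot,0,0)\in\bbH^2(\bbR)$, and set $C_0 := \E{\abs{\delta\xi}^2 + \int_0^T g_s^2\,ds}$, the right-hand side of the claimed bound. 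Subtracting the two BSDEs and inserting $\pm f^1(s,Y^2_s,Z^2_s)$, the pair $(\delta Y,\delta Z)$ solves
\[
\delta Y_t = \delta\xi + \int_t^T \brak{\varphi_s + g_s}\,ds - \int_t^T \delta Z_s\,dW_s, \qquad \abs{\varphi_s} \le L\brak{\abs{\delta Y_s} + \abs{\delta Z_s}},
\]
where $\varphi_s := f^1(s,Y^1_s,Z^1_s) - f^1(s,Y^2_s,Z^2_s)$.

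Next I would apply \Ito's formula to $\abs{\delta Y}^2$ on $[t,T]$ and take expectations; the stochastic integral is a true martingale since $(\delta Y,\delta Z)\in\bbS^2\times\bbH^2$ (alternatively one localises by stopping times and passes to the limit). Using $\abs{\varphi_s}\le L(\abs{\delta Y_s}+\abs{\delta Z_s})$ together with Young's inequality to absorb half of $\int_t^T\abs{\delta Z_s}^2 ds$ on the left, one arrives at
\[
\E{\abs{\delta Y_t}^2} + \half\,\E{\int_t^T \abs{\delta Z_s}^2 ds} \le C_0 + c\,\E{\int_t^T \abs{\delta Y_s}^2 ds}, \qquad c = 2L+2L^2+1.
\]
Dropping the $Z$-term and invoking Gronwall gives $\sup_{0\le t\le T}\E{\abs{\delta Y_t}^2}\le e^{cT}C_0$, and reinserting this bound controls $\E{\int_0^T\abs{\delta Z_s}^2 ds}$ by a constant depending only on $L$ and $T$ times $C_0$. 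For the $\bbS^2$-norm of $\delta Y$ I would take the supremum over $t$ directly in the BSDE, estimate $\brak{\int_0^T \abs{\varphi_s+g_s}\,ds}^2 \le T\int_0^T\brak{2L^2(\abs{\delta Y_s}+\abs{\delta Z_s})^2 + 2 g_s^2}\,ds$, and bound $\E{\sup_t\abs{\int_t^T \delta Z_s dW_s}^2}$ by a multiple of $\E{\int_0^T\abs{\delta Z_s}^2 ds}$ via Doob's $L^2$-inequality. Combining the three estimates yields $\norm{\delta Y}_{\bbS^2}^2 + \norm{\delta Z}_{\bbH^2}^2 \le \nu(L,T)\,C_0$.

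Every constant produced above is an explicit, nondecreasing function of the time horizon $T$, so it suffices to fix an arbitrary $\mu>0$ (the downstream contraction argument in \cite{ElKaroui} wants $\mu$ small so that the leading constant is close to $1$, but any choice validates the present statement) and set $\nu := \nu(L,\mu)$; then $T\le\mu$ gives the asserted inequality for all admissible $(f^i,\xi^i)$ with $f^1$ being $L$-Lipschitz. The computation is routine; the two points deserving attention are the linearisation step — adding and subtracting $f^1(s,Y^2_s,Z^2_s)$ rather than $f^2(s,Y^1_s,Z^1_s)$, which is precisely what keeps $\nu$ independent of $f^2$ — and the justification that the local martingale appearing in \Ito's formula is a genuine martingale, which here is immediate from the a priori membership of both solutions in $\bbS^2\times\bbH^2$ guaranteed by the cited existence result.
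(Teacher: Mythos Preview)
Your argument is correct. The paper does not give its own proof of this proposition; it merely quotes it as Proposition~5.1 of El Karoui et al.\ \cite{ElKaroui} specialised to a Brownian filtration and $p=2$. What you have written is precisely the standard derivation behind that result: apply \Ito's formula to $|\delta Y|^2$, linearise the driver difference through $f^1$ only so that the constant depends just on $L$, absorb the $|\delta Z|^2$ term via Young's inequality, use Gronwall for $\sup_t\E{|\delta Y_t|^2}$, and upgrade to the $\bbS^2$-norm by Doob/BDG. Your remark that the restriction $T\le\mu$ is not actually needed for the inequality itself (the constant $\nu$ is simply nondecreasing in $T$) is also correct; the smallness of $\mu$ matters only for the contraction step in the existence proof in \cite{ElKaroui}, not for the stability estimate as stated here.
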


\noindent
{\bf Proof of Theorem \ref{thmmain}}\\
Define 
$$
\hat{f}(t,y,z) = \left\{
\begin{array}{cc}
f(t,y,z) & \mbox{ if } |z| \le Q\\
f(t,y, Q z/|z|) & \mbox{ if } |z| > Q
\end{array} 
\right..
$$
Then $(\hat{f},\xi)$ are $4$-standard parameters. So the corresponding BSDE has a unique solution 
$(Y,Z)$ in $\mathbb{S}^4(\mathbb{R}) \times \mathbb{H}^4(\mathbb{R}^n)$.
Denote $x = (y,z) \in \mathbb{R}^{n+1}$ and let $\beta \in C^{\infty}_c (\mathbb{R}^{n+1})$ be the 
mollifier
$$
\beta(x) := 
\left\{
\begin{array}{ll}
\lambda \exp \brak{- \frac{1}{1- |x|^2}} &\text { if } |x|<1\\
0 &\text{ otherwise}
\end{array} \right.,
$$
where the constant $\lambda \in \mathbb{R}_+$ is chosen so that $\int_{\mathbb{R}^{n+1}} \beta(x) dx = 1$.
Set $\beta^m(x) := m^{n+1} \beta(m x)$, $m \in \mathbb{N} \setminus \crl{0}$, and define
$$
f^m(t,\omega,x):= \int_{\bbR^{n+1}} \hat{f}(t,\omega,x') \beta^m(x - x') dx'.
$$
Then all $f^m$ satisfy (A2')--(A4').
Therefore, one obtains from Proposition \ref{propsandwich} that there exist unique solutions $(Y^m,Z^m)$ in
$\mathbb{S}^4(\mathbb{R}) \times \mathbb{H}^4(\mathbb{R})$ to the BSDEs corresponding to
$(f^m,\xi)$, and $|Z_t^{m,i}| \le a_i(t) :=  (A_i + \int_t^T q_i(s) e^{-B(T-s)} ds) e^{B(T-t)}$. 
Since $\hat{f}$ satisfies the Lipschitz condition (S2) 
for some constant $L \in \mathbb{R}_+$, one can choose 
constants $\mu,\nu > 0$ such that the statement of Proposition \ref{propdiff} holds. This gives
$$ 
\norm{Y-Y^{m}}_{\bbS^2,[T-\mu,T]}^2+\norm{Z-Z^{m}}_{\bbH^2, [T-\mu,T]}^2
\le \nu \, \E{ \int_{T-\mu}^T \brak{\hat{f}(t,Y^{m}_t,Z^{m}_t) - f^{m}(t,Y^{m}_t,Z^{m}_t)}^2 dt}.
$$
Since $\abs{\hat{f}-f^m} \to 0$ uniformly in $(t,\omega,y,z)$ as $m \to \infty$,
one obtains $\E{(Y_{T-\mu} - Y^m_{T-\mu})^2} \to 0$ and $|Z^i_t| \le a_i(t)$ for $T- \mu \le t \le T$. 
Proposition \ref{propdiff} applied on the interval $[T-2\mu, T- \mu]$ yields
\beas
&& \norm{Y-Y^{m}}_{\bbS^2,[T-2\mu,T-\mu]}^2+\norm{Z-Z^{m}}_{\bbH^2, [T-2\mu,T-\mu]}^2\\
&\le& \nu \, \E{(Y_{T-\mu} - Y^m_{T-\mu})^2 + \int_{T-2\mu}^{T-\mu} 
\brak{\hat{f}(t,Y^{m}_t,Z^{m}_t) -f^{m}(t,Y^{m}_t,Z^{m}_t)}^2 dt}.
\eeas
So $\E{(Y_{T-2\mu} - Y^m_{T-2\mu})^2} \to 0$ 
and $|Z^i_t| \le a_i(t)$ for $T- 2\mu \le t \le T-\mu$. By repeating this argument,
one gets $|Z^i(t)| \le a_i(t)$ for all $t \in [0,T]$. It follows that $(Y,Z)$ is also a solution 
of the BSDE \eqref{bsde} with parameters $(f,\xi)$.

Finally, if $(\tilde{Y}, \tilde{Z})$ is another solution in 
$\mathbb{S}^4(\mathbb{R}) \times \mathbb{H}^{\infty}(\mathbb{R}^n)$ corresponding to $(f,\xi)$,
it must be equal to $(Y,Z)$ since both solve the BSDE \eqref{bsde} with a $4$-standard 
driver $\tilde{f}$ that coincides with $f$ for $|z| \le \tilde{Q}$, where 
$\tilde{Q} \in \mathbb{R}_+$ is a bound on $Z$ and $\tilde{Z}$.
\qed

\bigskip

In the following corollary, we assume that the terminal condition $\xi$ is bounded
and has bounded Malliavin derivative. This allows us to relax some of the 
assumptions of Theorem \ref{thmmain} on the driver $f$. 
The precise conditions we need are the following:

\begin{itemize}
\item[(B1)] 
$\xi$ satisfies (A1) and there exists a constant $C \in \mathbb{R}_+$ such that $|\xi| \le C$.

\item[(B2)] 
There exist constants $B,D \in \mathbb{R}_+$ and a nondecreasing function
$\rho : \mathbb{R}_+ \to \mathbb{R}_+$ such that 
\beas
&& |f(t,y,z)-f(t,y',z)| \le B|y-y'|\\
&& |f(t,y,z)-f(t,y,z')| \le \rho(|z| \vee |z'|) |z-z'|\\
&& |f(t,y,z)| \le D(1+|y|) + \rho(|z|) |z|
\eeas
for all $t \in[0,T]$, $y,y' \in \mathbb{R}$ with $|y|, |y'| \le R := (C+1) e^{DT}-1$ and all
$z,z' \in \mathbb{R}^n$.

\item[(B3)]
Condition (A3) holds for all $(y,z) \in \mathbb{R} \times \mathbb{R}^n$ such that
$|y| \le R$ and 
$$|z| \le Q := \sqrt{\sum_{i=1}^n \brak{A_i + \int_0^T q_i(t) e^{-B(T-t)} dt}^2} \; e^{BT}.$$

\item[(B4)] 
Condition (A4) holds for all $t \in [0,T]$, $y,y' \in \mathbb{R}$ and $z,z'\in \mathbb{R}^n$ such that
$|y|, |y'| \le R$ and $|z|, |z'| \le Q$.
\end{itemize}

\begin{corollary} \label{cormain}
Assume {\rm (B1)--(B4)}. Then the BSDE \eqref{bsde} has a unique solution $(Y,Z)$ 
in $\mathbb{S}^{\infty}(\mathbb{R}) \times \mathbb{H}^{\infty}(\mathbb{R}^n)$, and
\begin{align*}
|Y_t| & \le (C+1) e^{D(T-t)} -1 \quad \text{ for all } t\in[0,T]\\
|Z^i_t| &\le \left(A_i+ \int_t^T q_i(s) e^{-B(T-s)}ds\right)
e^{B(T-t)} \quad dt \otimes d\mathbb{P}\mbox{-a.e.} \quad \mbox{for all } i = 1, \dots, n.
\end{align*}
\end{corollary}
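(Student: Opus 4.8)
The plan is to reduce Corollary \ref{cormain} to Theorem \ref{thmmain} by truncating the driver in the $y$-variable, and then to reinstate the original equation by means of an a priori bound on $Y$.

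Write $h(t):=(C+1)e^{D(T-t)}-1$, so that $h\ge 0$ is decreasing with $-h'(t)=D(1+h(t))$, $h(T)=C$ and $h(0)=R$, and set $\tilde f(t,y,z):=f(t,\pi_R(y),z)$, where $\pi_R(y):=(y\wedge R)\vee(-R)$. Since $\pi_R$ is $1$-Lipschitz with range $[-R,R]$, the restriction $|y|\le R$ in (B2)--(B4) is harmless for $(\tilde f,\xi)$: $\tilde f$ is $B$-Lipschitz in $y$, locally Lipschitz in $z$ with modulus $\rho$, $\tilde f(\cdot,0,0)=f(\cdot,0,0)\in\mathbb{H}^4(\mathbb{R})$, and the Malliavin bounds of (A3)--(A4) hold on $\{|z|\le Q\}$ for all $y$ — that is, $(\tilde f,\xi)$ satisfies (A1)--(A4). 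Theorem \ref{thmmain} then produces a unique $(Y,Z)\in\mathbb{S}^4(\mathbb{R})\times\mathbb{H}^\infty(\mathbb{R}^n)$ solving the BSDE with driver $\tilde f$, with $|Z^i_t|\le a_i(t):=(A_i+\int_t^T q_i(s)e^{-B(T-s)}ds)e^{B(T-t)}$; each $a_i$ being nonincreasing and $Q^2=\sum_i a_i(0)^2$, this forces $|Z_t|\le Q$ almost everywhere.

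The heart of the matter is the bound $|Y_t|\le h(t)$: the possibly superlinear $z$-dependence of $f$ becomes harmless once $Z$ is known to be bounded. Set $\zeta_t:=\big(\tilde f(t,Y_t,Z_t)-\tilde f(t,Y_t,0)\big)Z_t/|Z_t|^2$, with $\zeta_t:=0$ where $Z_t=0$; then $|\zeta_t|\le\rho(Q)$ since $|Z_t|\le Q$, so by Girsanov's theorem there is an equivalent measure $\mathbb{Q}$ making $\tilde W_t:=W_t-\int_0^t\zeta_s\,ds$ a Brownian motion, and under $(\mathbb{Q},\tilde W)$ the pair $(Y,Z)$ solves the BSDE whose driver $g(t,y):=\tilde f(t,y,0)$ does not depend on $z$, is $B$-Lipschitz in $y$, and satisfies $-D(1+|y|)\le g(t,y)\le D(1+|y|)$. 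Comparing $(Y,Z)$ under $\mathbb{Q}$ with the deterministic solutions $(h,0)$ and $(-h,0)$ of the BSDEs with drivers $\pm D(1+|y|)$ and terminal data $\pm C$ — legitimate by Theorem 2.2 of \cite{ElKaroui} since $g$ is Lipschitz, using $|\xi|\le C$ and the two-sided growth bound on $g$ — yields $-h(t)\le Y_t\le h(t)$, hence $|Y_t|\le h(t)\le R$. Then the truncation is inactive along the solution, so $\tilde f(\cdot,Y_\cdot,Z_\cdot)=f(\cdot,Y_\cdot,Z_\cdot)$, $(Y,Z)$ solves \eqref{bsde}, lies in $\mathbb{S}^\infty(\mathbb{R})\times\mathbb{H}^\infty(\mathbb{R}^n)$, and obeys the stated estimates.

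For uniqueness, note that $(Y,Z)$ is the unique solution in $\mathbb{S}^4(\mathbb{R})\times\mathbb{H}^\infty(\mathbb{R}^n)$ of the $\tilde f$-BSDE, so any solution $(Y',Z')\in\mathbb{S}^\infty(\mathbb{R})\times\mathbb{H}^\infty(\mathbb{R}^n)$ of \eqref{bsde} that stays in $\{|y|\le R\}$ solves the $\tilde f$-BSDE as well and hence equals $(Y,Z)$. To remove this a priori restriction I would localize to the sub-interval $[\sigma,T]$ of $[0,T]$ adjacent to $T$ on which $Y'$ stays in $[-R,R]$ — nonempty since $|Y'_T|=|\xi|\le C<R$ when $T>0$ (and $T=0$ is trivial) — and run the computation of the previous paragraph there on the difference $f(s,Y_s,Z_s)-f(s,Y'_s,Z'_s)$, whose $y$- and $z$-increments are controlled by $B$ and by $\rho$ at a common bound on $|Z|$ and $|Z'|$; this forces $Y'=Y$ on $[\sigma,T]$. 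If $\sigma>0$, then $|Y'_\sigma|=R$ by construction while $|Y'_\sigma|=|Y_\sigma|\le h(\sigma)$, which is strictly smaller than $R$ — a contradiction. Hence $\sigma=0$ and $(Y',Z')=(Y,Z)$. The step I expect to be the real obstacle is precisely the a priori bound $|Y_t|\le h(t)$: it cannot be obtained before the Malliavin-based estimate on $Z$ is in hand, and once $Z$ is controlled the fast $z$-growth has to be absorbed into the change of measure so that the comparison barriers — which live at $z=0$ — no longer feel it; everything else is a routine translation of (B1)--(B4) into (A1)--(A4) together with the standard comparison and stability theory for Lipschitz BSDEs.
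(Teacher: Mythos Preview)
Your argument is correct and follows the same overall architecture as the paper's proof: truncate the driver in $y$ so that (B1)--(B4) become (A1)--(A4), invoke Theorem~\ref{thmmain} to obtain the solution and the $Z$-bound, use a comparison against the barriers $\pm h(t)=\pm\bigl((C+1)e^{D(T-t)}-1\bigr)$ to get $|Y_t|\le R$, deduce that the truncation is inactive, and finish uniqueness by a localization-to-$[t^*,T]$/contradiction argument.

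The one tactical difference is the comparison step. The paper truncates in \emph{both} $y$ and $z$, so the modified driver $\hat f$ is globally Lipschitz and the pointwise sandwich $-D(1+|y|)-\rho(Q)|z|\le \hat f(t,y,z)\le D(1+|y|)+\rho(Q)|z|$ lets one apply the standard comparison theorem directly; the barrier BSDEs still have deterministic solutions $(\pm h,0)$ because the $\rho(Q)|z|$ term vanishes there. You instead keep $z$ untruncated, absorb the $z$-increment into a bounded Girsanov kernel, and compare under $\mathbb Q$ against barriers with $z$-independent drivers. Both routes arrive at $|Y_t|\le h(t)$, but the paper's is shorter: it avoids the measure change and the need to verify that $(Y,Z)$ remains in the right $\mathbb S^p\times\mathbb H^p$ class under $\mathbb Q$. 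Your uniqueness argument is essentially the paper's $t^*$-argument, phrased slightly less precisely but with the same content.
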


\begin{proof}
Consider the following three BSDEs
\bea
\label{bsdea}
Y_t &=& \xi+\int_t^T \hat{f}(s,Y_s,Z_s)ds - \int_t^T Z_s dW_s\\
\label{bsdeb}
\overline{Y}_t &=& C + \int_t^T \overline{f} (s, \overline{Y}_s, \overline{Z}_s)ds-\int_t^T \overline{Z}_sdW_s\\
\label{bsdec}
\underline{Y}_t&=& - C +\int_t^T \underline{f}(s,\underline{Y}_s,\underline{Z}_s)ds-\int_t^T \underline{Z}_sdW_s,
\eea
where $\hat{f}(t,y,z) := f(t,\tilde{y}, \tilde{z})$ for 
$$
\tilde{y} := \left\{ \begin{array}{cc}
y & \mbox{ if } |y| \le R\\
R y/|y| & \mbox{ if } |y| > R
\end{array} 
\right. \quad \mbox{and} \quad
\tilde{z} := \left\{ \begin{array}{cc}
z & \mbox{ if } |z| \le Q\\
Q z/|z| & \mbox{ if } |z| > Q
\end{array} 
\right.,
$$
$\overline{f} (t,y,z) := D(1+|y|) + \rho(Q) |z|$ and
$\underline{f}(t,y,z)  := - \overline{f} (t,y,z)$.
$\hat{f}$ satisfies (A2)--(A4) and has the following two properties:
\begin{itemize}
\item[1)] $\hat{f}(t,y,z) = f(t,y,z)$ for all $(t,y,z)$ such that $|y| \le R$ and $|z| \le Q$
\item[2)] $\underline{f}(t,y,z) \le \hat{f}(t,y,z)\le \overline{f}(t,y,z)$ for all $(t,y,z)$.
\end{itemize}
It follows from Theorem \ref{thmmain} that \eqref{bsdea} has a unique solution $(Y,Z)$ in 
$\mathbb{S}^4(\mathbb{R}) \times \mathbb{H}^{\infty}(\mathbb{R}^n)$, and
$$
|Z^i_t| \le \brak{A_i+ \int_t^T q_i(s) e^{-B(T-s)} ds} e^{B(T-t)}.
$$
Moreover, one obtains from Theorem 2.2 in El Karoui et al. \cite{ElKaroui} that
$$
\underline{Y}_t \le Y_t \le \overline{Y}_t, \quad 0 \le t \le T,
$$
and it can easily be checked that
$$
\overline{Y}_t = - \underline{Y}_t = (C+1) e^{D(T-t)} -1, \quad \overline{Z}_t = \underline{Z}_t =0.
$$
This gives $|Y_t| \le (C+1) e^{D(T-t)} -1 \le R$. So $(Y,Z)$ solves the 
BSDE \eqref{bsde} with parameters $(f, \xi)$.

To conclude the proof, assume that $(\tilde{Y}, \tilde{Z})$ is another solution in
$\mathbb{S}^{\infty}(\mathbb{R}) \times \mathbb{H}^{\infty}(\mathbb{R}^n)$. Let $\tilde{Q} \in \mathbb{R}_+$ be
a bound on $\tilde{Z}$ and assume
$$
t^* := \sup \crl{s \in [0,T] : \p[|\tilde{Y}_s| \ge R] > 0} > 0.
$$
On $[t^*,T]$, $\tilde{Y}$ is bounded by $R$, and hence, $(\tilde{Y}, \tilde{Z})$ 
is equal to $(Y,Z)$ since both solve the BSDE \eqref{bsde} with a $4$-standard 
driver $\tilde{f}$ that coincides with $f$ for $|y| \le R$ and $|z| \le Q \vee \tilde{Q}$. 
In particular, $|\tilde{Y}_{t^*}| \le (C+1) e^{D(T-t^*)} -1 < R$. It follows that 
there exists an $\varepsilon > 0$ such that
$$
|\tilde{Y}_t| = |\bbE_t \tilde{Y}_{t^*} + \int_t^{t^*} \bbE_t f(s,\tilde{Y}_s,\tilde{Z}_s) ds|
\le (C+1) e^{D(T-t^*)} -1 + (t^*-t)[D(1+R) + \rho(\tilde{Q}) \tilde{Q}] < R
$$
for all $t \in [t^*-\varepsilon, t^*]$, a contradiction to the definition of $t^*$.
This shows that $t^* = 0$ and $(\tilde{Y}, \tilde{Z}) = (Y,Z)$.
\end{proof}

\setcounter{equation}{0}
\section{Lipschitz continuity and bounded Malliavin derivatives}
\label{sec:Lipschitz}

In this section we show that terminal conditions $\xi$ which
are Lipschitz continuous in the underlying Brownian motion $W$
are Malliavin differentiable with bounded Malliavin derivative.
On the other hand, we give an example of a terminal condition with 
bounded Malliavin derivative that is not Lipschitz continuous in $W$.
This shows that condition (A1) is weaker than Lipschitz continuity in $W$.

\begin{definition}
We denote the space of all continuous functions from $[0,T]$ to $\mathbb{R}^n$ starting from $0$ 
by $C^n_0[0,T]$ and call a random variable $\xi$ Lipschitz continuous in the Brownian motion $W$ 
with constants $A_1, \dots, A_n \in \mathbb{R}_+$ if $\xi = \varphi(W)$ for a function 
$\varphi : C^n_0[0,T] \to \mathbb{R}$ satisfying 
\be \label{Lip}
|\varphi(v) - \varphi(w)| \le \sum_{i=1}^n A_i \sup_{0 \le t \le T} |v^i(t) - w^i(t)|.
\ee
\end{definition}

\begin{proposition} \label{lipschitzprop}
Let $\xi$ be Lipschitz continuous in $W$ with constants $A_1, \dots, A_n \in \mathbb{R}_+$.
Then $\xi \in \bbD^{1,2}$ and $|D^i_t \xi| \le A_i$ $dt \otimes d\p$-a.e. for all $i=1, \dots, n$.
\end{proposition}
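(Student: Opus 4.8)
The plan is to approximate $\xi$ by functionals of finitely many increments of $W$, for which the Malliavin derivative can be computed via the finite-dimensional chain rule, and then to pass to the limit using the closability of $D$. A finite-dimensional reduction is needed here because $C^n_0[0,T]$ is not a Hilbert space, so the chain rule for Lipschitz $\varphi$ is not directly available. Concretely, for $N \ge 1$ set $t_k := kT/2^N$, $k=0,\dots,2^N$, let $W^N$ be the continuous process that coincides with $W$ at each $t_k$ and is affine on every $[t_{k-1},t_k]$, and put $\xi_N := \varphi(W^N)$. For each fixed continuous path the piecewise-linear interpolants converge uniformly as the mesh shrinks, so $\sup_{0\le t\le T}|W^N_t-W_t|\to 0$ $\p$-a.s.; moreover an interpolant never exceeds the supremum of the interpolated path, so $|W^{N,i}_t|\le \sup_{0\le s\le T}|W^i_s|$. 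Hence, by \eqref{Lip}, $|\xi_N-\xi|\le \sum_{i=1}^n A_i\sup_{0\le t\le T}|W^{N,i}_t-W^i_t|\to 0$ $\p$-a.s., dominated by $2\brak{\sum_i A_i}\sup_{0\le t\le T}|W_t|\in L^2$, and therefore $\xi_N\to\xi$ in $L^2$.

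Next I would carry out the finite-dimensional reduction. A piecewise-linear path in $C^n_0[0,T]$ is determined by its values at the nodes, and each component attains its supremum at a node, so $\xi_N=\psi_N(W_{t_1},\dots,W_{t_{2^N}})$ for a function $\psi_N:(\bbR^n)^{2^N}\to\bbR$ with $|\psi_N(x)-\psi_N(x')|\le \sum_{i=1}^n A_i\max_{1\le k\le 2^N}|x^i_k-x'^i_k|$. In particular $\psi_N$ is globally Lipschitz, so by the standard results on Lipschitz functions of non-degenerate Gaussian vectors (see \cite{Nualart}), $\xi_N\in\bbD^{1,2}$ and, using $D^i_t W^j_{t_k}=\delta_{ij}\indicator{[0,t_k]}(t)$,
$$
D^i_t\xi_N=\sum_{k=1}^{2^N}\partial_{x^i_k}\psi_N(W_{t_1},\dots,W_{t_{2^N}})\,\indicator{[0,t_k]}(t)\quad\p\text{-a.s.},
$$
so that for $dt\otimes d\p$-a.e. $(t,\omega)$ only the terms with $t_k\ge t$ contribute.

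The hard part is the pointwise bound on this quantity. Fix $i$, a time $t$, and a point $x$ at which $\psi_N$ is differentiable, and let $e\in(\bbR^n)^{2^N}$ be the vector with $e^i_k=1$ whenever $t_k\ge t$ and all other entries $0$. Then $\max_k|e^j_k|=\delta_{ij}$, so $|\psi_N(x+se)-\psi_N(x)|\le A_i|s|$ for every $s\in\bbR$; dividing by $|s|$ and letting $s\to 0$ gives $\abs{\sum_{k:\,t_k\ge t}\partial_{x^i_k}\psi_N(x)}\le A_i$. Hence $|D^i_t\xi_N|\le A_i$ $dt\otimes d\p$-a.e. for every $N$ and every $i$, and in particular $\sup_N\E{\int_0^T|D_t\xi_N|^2\,dt}\le T\sum_i A_i^2<\infty$. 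By the closability of the Malliavin derivative (see \cite{Nualart}), $\xi_N\to\xi$ in $L^2$ together with this uniform bound forces $\xi\in\bbD^{1,2}$ and $D\xi_N\to D\xi$ weakly in $L^2(\Omega;\scH^n)$. Since $\crl{g\in L^2([0,T]\times\Omega):|g|\le A_i\ dt\otimes d\p\text{-a.e.}}$ is convex and strongly closed, hence weakly closed, the weak limit satisfies $|D^i_t\xi|\le A_i$ $dt\otimes d\p$-a.e. for all $i=1,\dots,n$, which is the claim.

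The main obstacle, as the argument shows, is exactly the pointwise-in-$t$ bound $|D^i_t\xi_N|\le A_i$: a naive Cameron--Martin directional-derivative estimate only gives $\norm{D^i\xi}_{\scH}\le A_i\sqrt{T}$, and extracting the stronger pointwise bound requires both the finite-dimensional reduction and the specific "supremum norm, componentwise" form of \eqref{Lip}, exploited through the step-function perturbation direction $e$ above.
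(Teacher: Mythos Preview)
Your proof is correct and follows the same overall strategy as the paper: approximate $\xi$ by functionals of a piecewise-linear interpolation of $W$, bound the Malliavin derivative of the approximants uniformly by $A_i$, and pass to the limit via Lemma~1.2.3 of \cite{Nualart}. The technical execution, however, differs in an instructive way. The paper parametrizes the interpolant by the \emph{increments} $\Delta W_{t^m_j}$ and mollifies the resulting finite-dimensional function; since $D^i_t(\Delta W^i_{t^m_j})=1_{(t^m_{j-1},t^m_j]}(t)$ are \emph{disjoint} indicators, the bound $|D^i_t\tilde\xi^m|\le A_i$ follows at once from $|\partial_{x^i_j}\varphi^m|\le A_i$. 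You instead parametrize by the \emph{values} $W_{t_k}$ and invoke the Lipschitz chain rule (Proposition~1.2.4 of \cite{Nualart}) directly, avoiding mollification; the price is that $D^i_t W^i_{t_k}=1_{[0,t_k]}(t)$ are \emph{nested}, so you need the additional directional-derivative argument in the step direction $e$ to extract the pointwise-in-$t$ bound. Both parametrizations are natural; the paper's gives a one-line bound but requires the extra smoothing step, while yours is more direct on the chain-rule side but shifts the work to the clever choice of perturbation direction. Your use of a.s.\ convergence plus dominated convergence to get $\xi_N\to\xi$ in $L^2$ is also a bit quicker than the paper's quantitative $L^p$ estimate.
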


\begin{proof}
Assume $\xi$ is of the form $\varphi(W)$ for a function $\varphi$ satisfying \eqref{Lip}.
For $m \in \mathbb{N}$, set $t^m_j := jT/m$, $j=0, \dots, m$, and define the mapping
$
l^m:\left\{x = \left(x_j \right)_{j=1}^m : x_j \in \bbR^n \right\} \to C^n_0[0,T]
$ by 
$$ 
l^m_0(x) := 0 \quad \mbox{and} \quad
l^m_t(x) :=  x_1 + \dots + x_{j-1} + \frac{t-t^m_{j-1}}{T/m} x_j \quad \mbox{for } t^m_{j-1} < t \le t^m_j.
$$
Set $\xi^m := \varphi \circ l^m(\Delta W_{t^m_1},\dots, \Delta W_{t^m_m})$. 
For every $p \in [2, \infty)$, there exists a constant $b_p \in \mathbb{R}_+$ such that 
\beas
&& \bbE |\xi-\xi^m|^p \le b_p \, \bbE\sup_{0 \le t \le T} 
|W^1_t-l^{m,1}_t(\Delta W_{t^m_1},\dots, \Delta W_{t^m_m})|^p\\ 
&\le& b_p \, \bbE \max_{j=1,\dots, m} \sup_{t^m_{j-1} < t \le t^m_j} \left|W^1_t- W^1_{t^m_{j-1}} - 
\frac{t-t^m_{j-1}}{T/m} \Delta W^1_{t^m_j}\right|^p\\
&\le& b_p m \bbE \sup_{0 < t \le T/m} \left|W^1_t-\frac{tW^1_{T/m}}{T/m}\right|^p,
\eeas
where for the last inequality, we used that $W$ has stationary increments. It follows that
\beas
&& \norm{\xi-\xi^m}_p \le (b_p m)^{1/p} \norm{\sup_{0 < t \le T/m} \left|W^1_t-\frac{t W^1_{T/m}}{T/m}\right|}_p \le
(b_p m)^{1/p} \brak{\norm{\sup_{0 < t \le T/m} \left|W^1_t \right|}_p + \norm{W^1_{T/m}}_p}\\
&\le& (b_p m)^{1/p} c_p  \norm{W^1_{T/m}}_p \le (b_p m)^{1/p} d_p \sqrt{T/m},
\eeas
where $c_p$ and $d_p$ are constants depending on $p$, and the third inequality follows from 
Doob's maximal inequality. For $p>2$ the last term goes to $0$ as
$m \to\infty$. This shows that $\xi^m \to\xi$ in $L^p$ for all $p \in (2,\infty)$ and therefore also in $L^2$.
 
Note that for $x,y \in \mathbb{R}^{mn}$, 
\be \label{phiLip}
|\varphi \circ l^m(x) - \varphi \circ l^m(y)| \le \sum_{i,j} A_i |x^i_j - y^i_j|.
\ee
Let $\beta \in C^\infty_c(\bbR^{mn})$ be the mollifier
$$
\beta(x) := \left\{
\begin{array}{ll}
\lambda \exp \brak{- \frac{1}{1- |x|^2}} &\text { if } |x|<1\\
0 &\text{ otherwise}
\end{array} \right.,
$$
where $\lambda$ is a constant so that $\int_{\mathbb{R}^{m n}} \beta(x) dx = 1$.
Set $\beta^m(x) := m^{mn} \beta(m x)$ and define
$$
\varphi^m(x) := \int_{\mathbb{R}^{mn}} \varphi \circ l^m(y) \beta^m(x-y) dy, \quad
\tilde{\xi}^m := \varphi^m(\Delta W_{t^m_1}, \dots, \Delta W_{t^m_m}).
$$
By Proposition 1.2.3 of Nualart \cite{Nualart}, one has 
$$
D^i \tilde{\xi}^m = \sum_{j=1}^m \frac{\partial}{\partial x^i_j} \varphi^m(\Delta W_{t^m_1}, \dots, \Delta W_{t^m_m})
1_{(t^m_{j-1}, t^m_j]}.
$$
But it follows from \eqref{phiLip} that 
$\abs{\frac{\partial}{\partial x^i_j} \varphi^m(x)} \le A_i \quad \mbox{for all } i,j$.
So $|D^i_t \tilde{\xi}^m| \le A_i$ $dt \otimes d\p$-a.e.
Moreover $\tilde{\xi}^m \to \xi$ in $L^2$. Hence, one obtains from
Lemma 1.2.3 of Nualart \cite{Nualart} that $\xi$ is in $\mathbb{D}^{1,2}$ and 
$D \tilde{\xi}^m \to D \xi$ in the weak topology of $L^2(\Omega;{\cal H})$.
This implies that $|D^i_t \xi| \le A_i$ $dt \otimes d\p$-a.e.
\end{proof}

In the following example we construct a random variable with bounded Malliavin derivative that 
is not Lipschitz in the underlying Brownian motion.

\begin{Example}
Assume $T = n = 1$. Define
$$
g(t) := \sum_{k=1}^\infty (-1)^{k-1}2^k 1_{\crl{1-2^{1-k} < t \le 1-2^{-k}}}, \quad h(t) :=\int_0^t g(s) ds,
$$
and set
$$
\xi :=\int_0^1 h(t)dW_t.
$$
Then $\xi \in\bbD^{1,2}$ and $D \xi = h$ is bounded by $1$.

On the other hand, it follows from integration by parts that
$$
\int_0^{1-2^{-2k}} h(t) dW_t = - \int_0^{1 - 2^{-2k}} g(t) W_t dt \quad \mbox{for all } k \ge 1.
$$
Therefore,
$$
\xi = - \lim_{k \to \infty} \int_0^{1 - 2^{-2k}} g(t) W_t dt, 
$$ 
which shows that $\xi$ cannot be of the form $\xi = \varphi(W)$ for a Lipschitz 
continuous function $\varphi : C_0[0,1] \to \mathbb{R}$.
\end{Example}

\setcounter{equation}{0}
\section{Markovian BSDEs and semilinear parabolic PDEs}
\label{sec:pde}

For $(t,x) \in [0,T] \times \mathbb{R}^m$, consider an SDE of the form
\be
\label{sde}
X^{t,x}_{s} = x+\int_{t}^{s}b(r,X^{t,x}_{r})dr+\int_{t}^{s}\sigma(r)dW_r \quad t \le s \le T,
\ee
where $b : [0,T] \times \mathbb{R}^m \to \mathbb{R}^m$ 
and $\sigma : [0,T] \to \mathbb{R}^{m \times n}$ are Borel measurable functions 
for which there exist constants $E,F \in \mathbb{R}_+$ 
such that for all $t \in [0,T]$, $x,x' \in \mathbb{R}^m$ and $i,j$,
\bea
\label{sigmaL} |\sigma_{ij}(t)| &\le& E\\
\label{bbound} |b_i(t,x)| &\le& F(1+ \max_k |x_k|)\\
\label{bL} |b_i(t,x)-b_i(t,x')| &\le& F \max_k |x_k-x'_k|.
\eea

Denote $W^t_s := W_s - W_t$, $s \in [t,T]$, and let $({\cal F}^t_s)_{s \in [t,T]}$ be the filtration 
generated by $W^t$. By $\mathbb{S}^p_t(\mathbb{R}^d)$ we denote the space of all
$\mathbb{R}^d$-valued continuous $({\cal F}^t_s)$-adapted processes with finite $\mathbb{S}^p$-norm on $[t,T]$,
and by $\mathbb{H}^p_t(\mathbb{R}^d)$ the space of all $\mathbb{R}^d$-valued
$({\cal F}^t_s)$-predictable processes with finite $\mathbb{H}^p$-norm on $[t,T]$.
Analogously, we denote by $\mathbb{D}^{1,2}_t$ and $\mathbb{L}^{1,2}_{a,t}$ the spaces
$\mathbb{D}^{1,2}$ and $\mathbb{L}^{1,2}_a$ with respect to $(W^t_s)_{s \in [t,T]}$.

Under \eqref{sigmaL}--\eqref{bL} the SDE \eqref{sde} has a unique strong solution in 
$\mathbb{S}^2_t (\mathbb{R}^m)$; see for instance, Karatzas and Shreve \cite{KS}.
A Markovian BSDE based on $X^{t,x}$ is of the form
\be \label{markovian}
Y^{t,x}_{s} = h(X^{t,x}_{T})+ \int_{s}^{T} g(r,X^{t,x}_{r},Y^{t,x}_{r},Z^{t,x}_{r})dr-\int_{s}^{T}Z^{t,x}_{r}dW_{r}
\ee
for measurable functions $g : [0,T]\times\bbR^m \times\bbR\times\bbR^n \to \bbR$
and $h : \mathbb{R}^m \to \mathbb{R}$.

It is well-known that if $g$ is sufficiently regular in $(r,x)$ and 
Lipschitz in $(y,z)$, $u(t,x) = Y^{t,x}_t$ is a viscosity solution 
of the parabolic PDE with terminal condition
\be \label{pde}
u_t (t,x)+ {\cal L}_{(t,x)} u(t,x)+ g(t,x,u(t,x),\nabla u\sigma(t,x)) = 0, \quad
u(T,x) = h(x), 
\ee
where
$$
{\cal L}_{(t,x)} :=\frac{1}{2} \sum_{i,j} (\sigma \sigma^T)_{ij}(t)\partial_{x_{i}}\partial_{x_{j}}+\sum_{i}b_{i}(t,x)\partial_{x_{i}};
$$
see El Karoui et al. \cite{ElKaroui}.
Since Theorem \ref{thmmain} and Corollary \ref{cormain} give bounds 
on solutions of BSDEs, we can generalize this relationship between BSDEs and PDEs to the case 
where $g$ is non-Lipschitz in $(y,z)$. To do that we require $g$ and $h$ to satisfy the following conditions:

\begin{itemize}
\item[(C1)] There exists a constant $A \in \mathbb{R}_+$ such that 
$|h(x)-h(x')| \le A \max_i |x_i-x'_i|$ for all $x,x' \in \mathbb{R}^m$.

\item[(C2)] 
There exist a constant $B \in \mathbb{R}_+$ and a nondecreasing function 
$\rho : \mathbb{R}_+ \to \mathbb{R}_+$ such that
$$
|g(t,x,y,z)-g(t,x,y',z)| \le B|y-y'| \mbox{ and }
|g(t,x,y,z)-g(t,x,y,z')| \le \rho\brak{|z|\vee|z'|}|z-z'|
$$
for all $t\in[0,T]$, $x \in \mathbb{R}^m$, $y,y'\in\bbR$ and $z,z'\in\bbR^n$.

\item[(C3)]
$\int_0^T g(t,0,0,0)^2 dt < \infty$ and there exists a constant $G \in \mathbb{R}_+$ such that 
$$
|g(t,x,y,z)- g(t,x',y,z)| \le G \max_i |x_i-x'_i|
$$
for all $t \in [0,T]$, $x,x' \in \mathbb{R}^m$, $y \in \mathbb{R}$ and 
$z \in \mathbb{R}^n$ with
$$
|z| \le N := \sqrt{n} \brak{A + \frac{1-e^{-BT}}{B}G} E e^{(B+F)T}.
$$
\item[(C4)]  
There exists a constant $H \in \mathbb{R}_+$ such that 
$$
\abs{g(t,x,y,z) -g(t,x',y,z)-g(t,x,y',z')+g(t,x',y',z')} \le H \max_i |x_i-x'_i|\brak{|y-y'|+|z-z'|}$$
for all $t \in [0,T]$, $x,x' \in \mathbb{R}^m$, $y,y' \in \mathbb{R}$ and 
$z,z' \in \mathbb{R}^n$ with $|z|,|z'|\le N$.
\end{itemize}

\begin{proposition}\label{propm1}
Assume {\rm (C1)--(C4)}. Then for every $(t,x) \in [0,T] \times \mathbb{R}^m$,
the Markovian BSDE \eqref{markovian} has a unique solution $(Y^{t,x},Z^{t,x})$ in 
$\mathbb{S}^2_t(\mathbb{R}) \times \mathbb{H}^{\infty}_t(\mathbb{R}^n)$, and
$$
|Z^{t,x,i}_s| \le \brak{A + \frac{1-e^{-B(T-s)}}{B}G} E e^{B(T-s)} e^{F(T-t)} \quad \mbox{$ds \otimes d\p$-a.e.}
\quad \mbox{for all } i = 1, \dots, n.
$$ 
\end{proposition}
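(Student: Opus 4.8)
The plan is to realize \eqref{markovian} as a special instance of the general BSDE \eqref{bsde}, on the time interval $[t,T]$ with Brownian motion $W^t$ and filtration $(\mathcal{F}^t_s)_{s\in[t,T]}$ (so that, after the evident time shift, the results of Section \ref{sec:general} apply), by setting
$$
f(s,\omega,y,z) := g\big(s, X^{t,x}_s(\omega), y, z\big), \qquad \xi := h\big(X^{t,x}_T\big),
$$
and then checking that $(f,\xi)$ satisfies (A1)--(A4). Since $X^{t,x}$ is continuous and $(\mathcal{F}^t_s)$-adapted and $g$ is Borel measurable, $f$ has the measurability required in Section \ref{sec:general}. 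The process $K_{r\cdot}$ obtained below will turn out to be bounded, so Remark \ref{rmk1} will yield a solution in $\mathbb{S}^2_t(\mathbb{R}) \times \mathbb{H}^\infty_t(\mathbb{R}^n)$, which is precisely the assertion.

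The main ingredient is the Malliavin derivative of the forward process. Because $\sigma$ depends only on time, the noise in \eqref{sde} is additive, and it is standard that $X^{t,x}_s \in \mathbb{D}^{1,2}_t$ for every $s$, that $D^i_r X^{t,x,k}_s = 0$ for $r > s$, and that for $t \le r \le s$ the process $r \mapsto D^i_r X^{t,x}_s$ solves the linear equation obtained by formally differentiating \eqref{sde}, with the drift gradient $\partial_{x_l} b_k(u, X^{t,x}_u)$ understood as a Borel version of the a.e.\ partial derivative. (For a merely Lipschitz $b$ this is justified, as in the proof of Proposition \ref{lipschitzprop}, by mollifying $b$, using the uniform bound on the derivative of the mollification, and passing to a weak limit; see also Nualart \cite{Nualart}.) Condition \eqref{bL} means that $b_k(t,\cdot)$ is Lipschitz for the max-norm, so $\sum_l |\partial_{x_l} b_k| \le F$ a.e.; together with \eqref{sigmaL} and Gronwall's lemma this gives
$$
\max_{k} \big|D^i_r X^{t,x,k}_s\big| \le E\, e^{F(T-t)} \qquad \text{for } t \le r \le s \le T \text{ and } i = 1, \dots, n.
$$

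Next I would verify (A1)--(A4) with this bound. For (A1): $h$ is Lipschitz for the max-norm by (C1), so by the chain rule for Malliavin derivatives of Lipschitz functions (Nualart \cite{Nualart}), $\xi \in \mathbb{D}^{1,2}_t$ with $D^i_r \xi = \sum_k \partial_k h(X^{t,x}_T)\, D^i_r X^{t,x,k}_T$, hence $|D^i_r \xi| \le A \max_k |D^i_r X^{t,x,k}_T| \le A E e^{F(T-t)} =: A_i$. Condition (A2) holds with the same $B$ and $\rho$, directly from (C2). For (A3): the same chain rule gives, for $|z| \le N$, $D^i_r f(s,y,z) = \sum_k \partial_{x_k} g(s, X^{t,x}_s, y, z)\, D^i_r X^{t,x,k}_s$ with $\sum_k |\partial_{x_k} g| \le G$ by (C3), so $|D^i_r f(s,y,z)| \le G E e^{F(T-t)} =: q_i(s)$, a constant; and using the linear growth of $g$ in $(x,y,z)$ that follows from (C2)--(C3), the finiteness of all moments of $X^{t,x}$, and $\int_0^T g(s,0,0,0)^2\,ds < \infty$, routine estimates give $f(\cdot,0,0) \in \mathbb{H}^4_t(\mathbb{R})$ and $f(\cdot,y,z) \in \mathbb{L}^{1,2}_{a,t}(\mathbb{R})$. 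With these $A_i$ and $q_i$ the threshold in (A3) becomes $Q = \sqrt{n}\, E e^{F(T-t)}\big(A + \tfrac{1-e^{-B(T-t)}}{B}\,G\big) e^{B(T-t)}$, and since $T - t \le T$ one has $Q \le N$; thus (C3), which is assumed on $\{|z|\le N\}$, does imply (A3). Finally, (C4) says that $x \mapsto g(s,x,y,z) - g(s,x,y',z')$ is Lipschitz for the max-norm with constant $H(|y-y'| + |z-z'|)$ when $|z|,|z'| \le N$, i.e.\ $\sum_k |\partial_{x_k} g(s,x,y,z) - \partial_{x_k} g(s,x,y',z')| \le H(|y-y'| + |z-z'|)$; multiplying by $|D^i_r X^{t,x,k}_s|$ and summing over $k$ yields (A4) with $K_{rs} := H E e^{F(T-t)}$, which is indeed bounded.

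With (A1)--(A4) in force, Theorem \ref{thmmain} together with Remark \ref{rmk1} (applicable since $K_{r\cdot}$ is bounded) produces a unique solution $(Y^{t,x}, Z^{t,x})$ in $\mathbb{S}^2_t(\mathbb{R}) \times \mathbb{H}^\infty_t(\mathbb{R}^n)$, and substituting $A_i = A E e^{F(T-t)}$ and $q_i(u) = G E e^{F(T-t)}$ into the $Z$-bound of Theorem \ref{thmmain} gives
$$
|Z^{t,x,i}_s| \le E e^{F(T-t)} \Big(A + \frac{1 - e^{-B(T-s)}}{B}\,G\Big) e^{B(T-s)},
$$
which is the stated estimate. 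The one step requiring genuine care is the differentiability input --- Malliavin differentiability of the SDE solution under a merely Lipschitz drift, and the chain rule for the merely Lipschitz functions $h$ and $g(s,\cdot,y,z)$ --- which is handled by the mollification and weak-limit argument already used in Section \ref{sec:Lipschitz}; the rest of the proof consists of Gronwall's inequality and standard moment bounds for \eqref{sde}.
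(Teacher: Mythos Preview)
Your proof is correct and follows essentially the same route as the paper: set $f(s,y,z)=g(s,X^{t,x}_s,y,z)$, $\xi=h(X^{t,x}_T)$, verify (A1)--(A4) with $A_i=AEe^{F(T-t)}$, $q_i\equiv GEe^{F(T-t)}$ and constant $K_{r\cdot}$ (the bound on $D^i_r X^{t,x}$ being exactly the paper's Lemma \ref{lemmaX}), and then apply Theorem \ref{thmmain} together with Remark \ref{rmk1}. One small slip: your claim that ``routine estimates give $f(\cdot,0,0)\in\mathbb{H}^4_t(\mathbb{R})$'' is not supported by (C3), which only yields $\int_0^T g(s,0,0,0)^2\,ds<\infty$; but this is harmless, since you correctly observe that $K_{r\cdot}$ is bounded and invoke Remark \ref{rmk1}, which is precisely how the paper sidesteps the $\mathbb{H}^4$ requirement.
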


\begin{proof}
If we can show that the BSDE \eqref{markovian} satisfies (A1) with $A_i = A E e^{F(T-t)}$,
(A2), (A3) with $q_i \equiv G Ee^{F(T-t)}$ but without $f(.,0,0) \in \mathbb{H}^4(\mathbb{R})$ and
(A4) with a constant $K$, then the proposition follows from Theorem \ref{thmmain} and Remark \ref{rmk1}.

(A2) is a direct consequence of (C2).
By Lemma \ref{lemmaX} below, $X^{t,x}_s$ is in $(\mathbb{D}^{1,2}_t)^m$ for all $t \le s \le T$ and
$|D^i_r X^{t,x,j}_s| \le E e^{F(T-t)}$ $dr \otimes d\p$-a.e. for all $i$ and $j$.
It follows from the Lipschitz condition (C1) and Proposition 1.2.4 of Nualart \cite{Nualart} that 
$h(X^{t,x}_T)$ is in $\mathbb{D}^{1,2}_t$ and for all $i =1, \dots, n$, there exists an
$m$-dimensional random vector $\Lambda$ satisfying
$$
D^i_r h(X^{t,x}_T) = \sum_{j=1}^m \Lambda^j D^i_r X^{t,x,j}_T \quad \mbox{and} \quad
\sum_{j=1}^m |\Lambda^j| \le A.
$$
This shows that the terminal condition $\xi = h(X^{t,x}_T)$ satisfies (A1) with 
$A_i = AE e^{F(T-t)}$.
Analogously, it follows from (C3) that for every pair $(y,z)$ such that $|z| \le N$, 
$g(.,X^{t,x}_{.}, y,z)$ belongs to $\mathbb{L}^{1,2}_{a,t}$ and 
$|D^i_r g(s,X^{t,x}_s,y,z)| \le G E e^{F(T-t)}$. So (A3) holds with $q_i \equiv G E e^{F(T-t)}$.
The same argument applied to 
$$
\tilde{g}(s,x,y,y',z,z') = g(s,x,y,z) - g(s,x,y',z')
$$
gives $|D^i_r g(s,X^{t,x}_s,y,z) - D^i_r g(s,X^{t,x}_s,y',z')| \le
H E e^{F(T-t)}(|y-y'| + |z-z'|)$ for all $y,y' \in \mathbb{R}$ and 
$z,z' \in \mathbb{R}^n$ with $|z|, |z'| \le N$. This shows that (A4) holds 
with a constant $K$. 
\end{proof}

\begin{lemma}\label{lemmaX}
For all $0 \le t \le s \le T$ and $x \in \mathbb{R}^m$, $X^{t,x}_s$ is in $(\mathbb{D}^{1,2}_t)^m$ and 
$$
|D^i_r X^{t,x,j}_s| \le E e^{F(T-t)} \quad \mbox{$dr \otimes d\p$-a.e.}
\quad \mbox{for all } i=1, \dots, n \mbox{ and } j =1, \dots, m.
$$
\end{lemma}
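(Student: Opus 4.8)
The plan is to apply the standard Malliavin calculus theory for SDEs with Lipschitz coefficients, as developed in Nualart \cite{Nualart}, and then to extract the desired pointwise bound by a Gronwall argument on the $L^\infty$-norm of the Malliavin derivative. First I would recall from Theorem 2.2.1 of Nualart \cite{Nualart} that, since $b$ satisfies the global Lipschitz condition \eqref{bL} and $\sigma$ is bounded and deterministic, the solution $X^{t,x}_s$ of \eqref{sde} belongs to $(\mathbb{D}^{1,2}_t)^m$ for every $s \in [t,T]$, and the Malliavin derivative $D^i_r X^{t,x,j}_s$ (for $r \in [t,s]$, and $=0$ for $r > s$) satisfies the linear SDE
$$
D^i_r X^{t,x,j}_s = \sigma_{ji}(r) + \int_r^s \sum_{k=1}^m \partial_{x_k} b_j(u, X^{t,x}_u)\, D^i_r X^{t,x,k}_u \, du, \qquad r \le s \le T,
$$
where $\partial_{x_k}b_j$ is understood in a suitable a.e.\ sense (one may first assume $b \in C^1$ with bounded derivatives, derive the estimate, and then pass to the general Lipschitz case by a mollification argument exactly as in the proof of Proposition \ref{propsandwich} and in the body of Proposition \ref{propm1}; the bound $E e^{F(T-t)}$ is stable under this passage). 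Note there is no stochastic integral term on the right-hand side because $\sigma$ does not depend on $X$.

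Next I would fix $i$ and $r$, write $\Phi^{j}_s := D^i_r X^{t,x,j}_s$ for $s \ge r$, and estimate componentwise. From the integral equation and \eqref{sigmaL},
$$
|\Phi^j_s| \le E + \int_r^s \sum_{k=1}^m |\partial_{x_k} b_j(u,X^{t,x}_u)| \, |\Phi^k_u| \, du.
$$
The Lipschitz condition \eqref{bL} in the $\max$-norm gives $\sum_{k=1}^m |\partial_{x_k} b_j(u,x)| \le F$ for each $j$ (this is the precise reading of \eqref{bL}: $b_j$ is Lipschitz with constant $F$ in the $\ell^\infty$-norm, hence its gradient has $\ell^1$-norm at most $F$). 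Setting $\psi_s := \max_{j} |\Phi^j_s|$, one obtains $\psi_s \le E + F \int_r^s \psi_u\, du$, and Gronwall's inequality yields $\psi_s \le E e^{F(s-r)} \le E e^{F(T-t)}$ for all $r \le s \le T$, since $t \le r \le s \le T$. As this holds $dr \otimes d\mathbb{P}$-a.e., the claimed bound follows; for $r > s$ the derivative vanishes and the bound is trivial.

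The main obstacle is handling the non-differentiability of $b$ in $x$: condition \eqref{bL} only gives Lipschitz continuity, not $C^1$ regularity, so the linear equation for $D_r X$ must be justified via mollification of $b$, together with a stability argument showing that the mollified solutions and their Malliavin derivatives converge (in $\mathbb{S}^2_t$ and in the weak $L^2$ topology on $\Omega \times [0,T]$, respectively) to those of the original SDE — so that the uniform bound $E e^{F(T-t)}$, which is insensitive to the mollification since it only uses the Lipschitz constant $F$ and the bound $E$ on $\sigma$, passes to the limit. This is entirely parallel to the mollification arguments already used in the proof of Theorem \ref{thmmain} and invoked in Proposition \ref{propm1}, so I would either carry it out briefly or simply cite Theorem 2.2.1 and the surrounding discussion in Nualart \cite{Nualart}, where precisely this Lipschitz case is treated.
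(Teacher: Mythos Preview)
Your proof is correct and follows essentially the same Gronwall-on-the-max argument as the paper. The one difference worth noting is that the paper avoids your mollification step for non-smooth $b$: it applies Proposition~1.2.4 of Nualart~\cite{Nualart} (the Lipschitz chain rule) directly, obtaining a random matrix $\Lambda$ with $D^i b_j(s,X^{t,x}_s)=\sum_l \Lambda^{jl}_s D^i X^{t,x,l}_s$ and $\sum_l |\Lambda^{jl}_s|\le F$, which feeds straight into the Gronwall estimate without any approximation.
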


\begin{proof}
It follows from Theorem 2.2.1 of Nualart \cite{Nualart} that $X^{t,x}_s$ is in $(\mathbb{D}^{1,2}_t)^m$.
Moreover, one obtains from the Lipschitz condition \eqref{bL} and
Proposition 1.2.4 of Nualart \cite{Nualart} that there exists an $\mathbb{R}^{m \times m}$-valued process
$\Lambda$ such that
$$
D^i b_j(s,X^{t,x}_s) = \sum_{l=1}^m \Lambda^{jl}_s D^i X^{t,x,l}_s 
 \quad \mbox{and} \quad \sum_{l=1}^m |\Lambda^{jl}_s| \le F.
$$
It follows that $\int_t^s b_j(u,X^{t,x}_u)du \in \bbD^{1,2}_t$ with
$$
\abs{D_r^i \int_t^s b_j(u,X^{t,x}_u) du} \le \int_t^s \abs{D_r^i b_j(u,X^{t,x}_u)} du
\le F \int_t^s \max_l |D_r^i X^{t,x,l}_u| du.
$$
Moreover, 
$
|D^i \int_t^s \sum_{l=1}^n \sigma_{jl}(u) dW^l_u| = |\sigma_{ji} 1_{[t, s]}| \le E.
$
Therefore,
$$
\max_j |D_r^iX^{t,x,j}_s| \leq E  + F \int_t^s \max_j |D_r^i X^{t,x,j}_u| du,
$$
and one obtains from Gronwall's lemma that
$
|D_r^iX^{t,x,j}_s| \le E e^{F(s-t)}$ $dr \otimes d\p$-a.e.
\end{proof}

If the function $h$ is bounded, one can relax some of the assumptions of Propositon \ref{propm1} on $g$
as follows:

\begin{itemize}
\item [(D1)] The function $h$ satisfies (C1) and is bounded by a constant $C \in \mathbb{R}_+$.

\item[(D2)] There exist constants $B,D \in\bbR_+$ and a nondecreasing
function $\rho : \bbR_+ \to \bbR_+$ such that 
\beas
&&|g(t,x,y,z) - g(t,x,y',z)| \le B|y-y'|\\
&&|g(t,x,y,z) - g(t,x,y,z')| \le \rho(|z| \vee |z'|) |z-z'|\\
&&|g(t,x,y,z)| \le D(1+|y|) + \rho(|z|) |z|
\eeas
for all $t \in[0,T]$, $x \in \mathbb{R}^m$, $y\in\bbR$ with 
$|y|,|y'| \le R := (C+1) e^{DT} - 1$ and all $z,z' \in\bbR^n$. 

\item[(D3)] Condition (C3) holds for all 
for all $t \in [0,T]$, $x,x' \in \mathbb{R}^m$, $y \in \mathbb{R}$ and 
$z \in \mathbb{R}^n$ such that $|y| \le R$ and $|z| \le N$.

\item[(D4)] Condition (C4) holds for all $t \in [0,T]$, $x,x' \in \mathbb{R}^m$, $y,y' \in \mathbb{R}$ and 
$z,z' \in \mathbb{R}^n$ such that $|y|,|y'| \le R$ and $|z|,|z'| \le N$.
\end{itemize}

\begin{proposition} \label{propm2}
Assume {\rm (D1)--(D4)}. Then for all $(t,x) \in [0,T] \times \mathbb{R}^m$,
the Markovian BSDE \eqref{markovian} has a unique solution $(Y^{t,x},Z^{t,x})$ 
in $\mathbb{S}^\infty_t(\mathbb{R}) \times \mathbb{H}^{\infty}_t(\mathbb{R}^n)$, and
\begin{align*}
|Y^{t,x}_s| & \le (C+1) e^{D(T-s)} -1 \quad \text{ for all } s \in[t,T]\\
|Z^{t,x,i}_s| & \le \brak{A + \frac{1-e^{-B(T-s)}}{B}G} E e^{B(T-s)} e^{F(T-t)} 
\quad \mbox{$ds \otimes d\p$-a.e.} \quad \mbox{for all } i = 1, \dots, n.
\end{align*}
\end{proposition}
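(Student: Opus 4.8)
The plan is to reduce the statement to Corollary \ref{cormain} in exactly the way the proof of Proposition \ref{propm1} reduces its statement to Theorem \ref{thmmain}, but now exploiting that the terminal condition is bounded. Fix $(t,x)$ and work on the interval $[t,T]$ with the Brownian motion $W^t$ and the filtration $(\mathcal{F}^t_s)$; all of Section \ref{sec:general}, and Corollary \ref{cormain} in particular, applies verbatim on $[t,T]$ with $W^t$ (the integral $\int_s^T Z^{t,x}_r dW_r$ in \eqref{markovian} makes sense in the $W^t$-framework since $dW^t=dW$ and $Z^{t,x}$ is $(\mathcal{F}^t_s)$-adapted). I would then put $f(s,y,z) := g(s,X^{t,x}_s,y,z)$, $\xi := h(X^{t,x}_T)$ and verify that $(f,\xi)$ satisfies conditions (B1)--(B4) with $A_i := A E e^{F(T-t)}$, the same constants $B$, $D$ and function $\rho$, and $q_i \equiv G E e^{F(T-t)}$.

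For (B1): by Lemma \ref{lemmaX} together with the chain rule (Proposition 1.2.4 of Nualart \cite{Nualart}) applied to the Lipschitz function $h$ --- exactly as in the proof of Proposition \ref{propm1} --- one gets $\xi \in \mathbb{D}^{1,2}_t$ with $|D^i_r \xi| \le A E e^{F(T-t)} = A_i$, and $|\xi| = |h(X^{t,x}_T)| \le C$ by (D1). For (B2): this is immediate from (D2), since the threshold $(C+1)e^{D(T-t)}-1$ that is relevant on $[t,T]$ is dominated by $R = (C+1)e^{DT}-1$, so the inequalities in (D2) hold on the required range of $y$. For (B3): the constant $Q$ of Corollary \ref{cormain}, formed with the $A_i$ and $q_i$ above on $[t,T]$, equals $\sqrt{n}\, E e^{F(T-t)}\big(A + \tfrac{1-e^{-B(T-t)}}{B}G\big)e^{B(T-t)}$, which is $\le N$; hence (D3) and the chain-rule argument of Proposition \ref{propm1} give $f(\cdot,y,z) \in \mathbb{L}^{1,2}_{a,t}$ with $|D^i_r f(s,y,z)| \le q_i$ for $|y|\le R$, $|z|\le Q$, while the growth bound in (D2) yields $|f(s,0,0)| = |g(s,X^{t,x}_s,0,0)| \le D$, so $f(\cdot,0,0) \in \mathbb{H}^\infty_t \subset \mathbb{H}^4_t$. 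For (B4): apply the same chain-rule argument to $\tilde g(s,x,y,y',z,z') := g(s,x,y,z)-g(s,x,y',z')$ and invoke (D4), which, thanks to $Q \le N$, covers all $|z|,|z'|\le Q$.

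Once (B1)--(B4) are established, Corollary \ref{cormain} on $[t,T]$ (with $W^t$) yields the unique solution $(Y^{t,x},Z^{t,x}) \in \mathbb{S}^\infty_t(\mathbb{R}) \times \mathbb{H}^\infty_t(\mathbb{R}^n)$, the bound $|Y^{t,x}_s| \le (C+1)e^{D(T-s)}-1$, and $|Z^{t,x,i}_s| \le \big(A_i + \int_s^T q_i(r)e^{-B(T-r)}dr\big)e^{B(T-s)}$; substituting the chosen $A_i$ and $q_i$ turns the last expression into $\big(A+\tfrac{1-e^{-B(T-s)}}{B}G\big)E e^{B(T-s)}e^{F(T-t)}$, as claimed. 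There is no genuinely difficult step here; the only points deserving care are the bookkeeping that matches the data to the constants of Corollary \ref{cormain} --- above all the inequality $Q \le N$, which is what makes the hypotheses (D2)--(D4), assumed only on the bounded ranges $|y|\le R$, $|z|\le N$, suffice --- and the observation that, $h$ being bounded and $f(\cdot,0,0)$ being bounded, Corollary \ref{cormain} applies directly and delivers $Y^{t,x}\in\mathbb{S}^\infty_t$ with no need to invoke Remark \ref{rmk1}.
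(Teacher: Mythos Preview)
Your proposal is correct and follows exactly the route the paper takes: the paper's proof simply states that (D1)--(D4) imply (B1)--(B4) and then invokes Corollary \ref{cormain} ``like Proposition \ref{propm1} follows from Theorem \ref{thmmain}.'' You have filled in precisely the verification the paper leaves implicit, including the key observation $Q\le N$ and the fact that $|f(s,0,0)|\le D$ from (D2) gives $f(\cdot,0,0)\in\mathbb{H}^4_t$ directly, so Remark \ref{rmk1} is not needed.
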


\begin{proof}
(D1)--(D4) imply (B1)--(B4). Therefore, the proposition follows from  Corollary \ref{cormain}
like Proposition \ref{propm1} follows from Theorem \ref{thmmain}.
\end{proof}

\begin{corollary} \label{corpde} 
If the assumptions of Proposition \ref{propm1} or Proposition
\ref{propm2} hold, then the PDE \eqref{pde} has a viscosity 
solution $u$ such that for all $(t,x) \in [0,T] \times \mathbb{R}^m$,
$u(s,X^{t,x}_s) = Y^{t,x}_s$, $t \le s \le T$, where $X^{t,x}$ and $Y^{t,x}$ are solutions 
of \eqref{sde} and \eqref{markovian}, respectively. 
\end{corollary}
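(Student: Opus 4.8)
The plan is to set $u(t,x) := Y^{t,x}_t$. Since $Y^{t,x}_t$ is measurable with respect to the $\p$-trivial $\sigma$-algebra $\mathcal F^t_t$, it is deterministic, so $u \colon [0,T] \times \mathbb R^m \to \mathbb R$ is a genuine function. The first thing to establish is the flow identity $u(s, X^{t,x}_s) = Y^{t,x}_s$ for all $t \le s \le T$. As in the Lipschitz case of El Karoui et al.\ \cite{ElKaroui}, this follows from the flow property $X^{t,x}_r = X^{s, X^{t,x}_s}_r$ of the SDE \eqref{sde} together with uniqueness of the BSDE solution established in Proposition \ref{propm1}/\ref{propm2}: conditionally on $\mathcal F_s$, the restriction of $(Y^{t,x}, Z^{t,x})$ to $[s,T]$ solves the same BSDE as $(Y^{s, X^{t,x}_s}, Z^{s, X^{t,x}_s})$, so they agree; the (by now standard) measurable-dependence-on-parameter argument then identifies $Y^{s, X^{t,x}_s}_s$ with the deterministic function $u$ evaluated at the random point $X^{t,x}_s$. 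Taking $s = T$ also gives $u(T,\cdot) = h$.

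The core of the proof is a truncation that reduces matters to the classical theory. Let $R,N$ be the constants from Proposition \ref{propm2}, resp.\ \ref{propm1}. Define $\hat g(t,x,y,z) := g(t,x,\tilde y,\tilde z)$, where $\tilde z := z$ if $|z| \le N$ and $\tilde z := Nz/|z|$ otherwise, and (only in the bounded setting (D1)--(D4)) $\tilde y := y$ if $|y| \le R$ and $\tilde y := Ry/|y|$ otherwise, while $\tilde y := y$ under (C1)--(C4), since (C2) already supplies a global Lipschitz constant in $y$. Then (C2)/(D2) makes $\hat g$ globally Lipschitz in $(y,z)$ with constant $\max\{B,\rho(N)\}$, (C3)/(D3) makes it Lipschitz in $x$ uniformly in the other variables, and $\int_0^T \hat g(t,0,0,0)^2 dt = \int_0^T g(t,0,0,0)^2 dt < \infty$; together with the Lipschitz terminal function $h$, the driver $\hat g$ fits the framework of El Karoui et al.\ \cite{ElKaroui}. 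Hence $(t,x) \mapsto \hat u(t,x) := \hat Y^{t,x}_t$ is continuous and is a viscosity solution of the PDE \eqref{pde} with $g$ replaced by $\hat g$. On the other hand, the a priori estimates of Proposition \ref{propm1}/\ref{propm2} give $|Z^{t,x}_s| \le N$ for every $(t,x)$ (and, under (D1)--(D4), also $|Y^{t,x}_s| \le R$), so along the solution $g(s, X^{t,x}_s, Y^{t,x}_s, Z^{t,x}_s) = \hat g(s, X^{t,x}_s, Y^{t,x}_s, Z^{t,x}_s)$; thus $(Y^{t,x}, Z^{t,x})$ also solves the $\hat g$-BSDE, and by uniqueness $u = \hat u$. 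In particular $u$ is continuous and is a viscosity solution of the $\hat g$-PDE, and the flow identity of the previous paragraph holds.

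It remains to upgrade this to a viscosity solution of \eqref{pde} itself. Here one uses that $u$ is in fact Lipschitz in $x$: running the same Gronwall-type BSDE/SDE stability estimates that underlie the $Z$-bound --- now for the spatial difference $Y^{t,x} - Y^{t,x'}$, using Lemma \ref{lemmaX}, (C1) and (C3) --- yields $|u(t,x) - u(t,x')| \le \kappa \max_i|x_i - x'_i|$ with a constant $\kappa$ that, combined with the bound $|\sigma_{ij}| \le E$, satisfies $\sqrt n\,\kappa E \le N$; this is exactly why $N$ was defined with the factors $\sqrt n$, $E$, $e^{(B+F)T}$ and $A + \frac{1-e^{-BT}}{B}G$. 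If now $\phi$ is a smooth test function with $u - \phi$ attaining a local maximum (resp.\ minimum) at $(t_0,x_0)$, then expanding $\phi(t_0,\cdot)$ along the direction $x_0 \mp t\,v$ with $v_i$ the sign of $\partial_{x_i}\phi(t_0,x_0)$ and letting $t \downarrow 0$, the Lipschitz bound forces $\sum_i |\partial_{x_i}\phi(t_0,x_0)| \le \kappa$, hence $|\nabla\phi(t_0,x_0)\sigma(t_0)| \le \sqrt n\,\kappa E \le N$; likewise $|u(t_0,x_0)| \le R$ under (D1)--(D4), and no $y$-truncation was performed under (C1)--(C4). Consequently $g$ and $\hat g$ agree at the argument $(t_0,x_0,u(t_0,x_0),\nabla\phi(t_0,x_0)\sigma(t_0))$, so the viscosity sub-/supersolution inequalities for the $\hat g$-PDE at $(t_0,x_0)$ are precisely those for \eqref{pde}, and the terminal condition $u(T,\cdot) = h$ is already in hand. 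I expect the main obstacle to be exactly this last step: verifying the Lipschitz-in-$x$ estimate on $u$ with a constant sharp enough to match $N$, so that the truncation of $g$ in $z$ is invisible to \emph{every} test function and not merely along the trajectories $s \mapsto X^{t,x}_s$.
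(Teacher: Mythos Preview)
Your truncation-and-reduce-to-Lipschitz strategy is exactly the paper's: replace $g$ by a Lipschitz driver $\tilde g$ agreeing with $g$ on $|z|\le N$ (and $|y|\le R$ under (D)), observe that the a priori bounds of Propositions \ref{propm1}/\ref{propm2} force $(Y^{t,x},Z^{t,x})$ to solve the $\tilde g$-BSDE too, and invoke the classical theory --- the paper cites Theorem~4.3 of Pardoux--Peng \cite{PP92} rather than \cite{ElKaroui} --- for the viscosity property and the flow identity $u(s,X^{t,x}_s)=Y^{t,x}_s$.

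The paper actually stops there: it simply asserts that this yields a viscosity solution of \eqref{pde} and says nothing about the passage from the $\tilde g$-PDE to the $g$-PDE. Your final paragraph makes this step explicit, and your worry about matching the constant is in fact unfounded. Running the comparison argument of Proposition \ref{propsandwich} on the spatial difference $Y^{t,x}-Y^{t,x'}$ --- with terminal bound $A\max_j|X^{t,x,j}_T-X^{t,x',j}_T|$ from (C1), inhomogeneity bounded by $G\max_j|X^{t,x,j}_r-X^{t,x',j}_r|$ from (C3), and the elementary Gronwall estimate $\max_j|X^{t,x,j}_s-X^{t,x',j}_s|\le e^{F(s-t)}\max_j|x_j-x'_j|$ coming from \eqref{bL} (note $\sigma$ is state-independent) --- gives precisely
\[
|u(t,x)-u(t,x')|\;\le\;\Big(A+\frac{1-e^{-B(T-t)}}{B}\,G\Big)e^{(B+F)(T-t)}\,\max_j|x_j-x'_j|,
\]
so that $\sqrt n\,\kappa E\le N$ as you need. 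The reason the constants line up is that (C1), (C3) and \eqref{bL} are all phrased in the $\ell^\infty$ norm on $\mathbb R^m$, and this norm is preserved through both the SDE Gronwall step and the BSDE comparison.
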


\begin{proof}
If the assumptions of Proposition \ref{propm1} hold, 
the BSDE \eqref{markovian} has for all $(t,x) \in [0,T] \times \mathbb{R}^m$
a solution $(Y^{t,x},Z^{t,x})$ such that $Z^{t,x}$ is bounded by $N$.
So $(Y^{t,x},Z^{t,x})$ also solves \eqref{markovian} if $g$ is replaced by a function $\tilde{g}$ that 
agrees with $g$ for $|z| \le N$ and is Lipschitz in $(x,y,z)$. It
follows from Theorem 4.3 of Pardoux and Peng \cite{PP92}
that $u(t,x) := Y^{t,x}_t$ is a viscosity solution of \eqref{pde} such that 
$u(s,X^{t,x}_s) = Y^{t,x}_s$, $t \le s \le T$. 

Under the assumptions of Proposition \ref{propm2}, 
the BSDE \eqref{markovian} has a solution $(Y^{t,x},Z^{t,x})$ such that 
$Y^{t,x}$ is bounded by $(C+1) e^{DT} -1$ and $Z^{t,x}$ by $N$. Then
$(Y^{t,x},Z^{t,x})$ still solves \eqref{markovian} if $g$ is replaced by a function $\tilde{g}$ that 
is Lipschitz in $(x,y,z)$ and agrees with $g$ for $|y| \le (C+1) e^{DT}-1$ and $|z| \le N$.
As above it follows that $u(t,x) := Y^{t,x}_t$ is a viscosity solution of \eqref{pde} such that 
$u(s,X^{t,x}_s) = Y^{t,x}_s$, $t \le s \le T$. 
\end{proof}

\begin{corollary} \label{corunique}
Assume the conditions of Proposition \ref{propm2} hold and set $u(t,x):=Y^{t,x}_t$. If
for every $L\in\bbR_+$, there exists a constant $\gamma_L \in \bbR$ and a continuous
function $\delta_L:\bbR_+\to\bbR_+$ with $\delta_L(0)=0$ such that
\be  \label{uni}
\begin{aligned}
g(t,x,y',v \sigma(t))-g(t,x,y,v \sigma(t)) \ge \gamma_L(y-y')\\ 
|g(t,x,y,v \sigma(t))-g(t,x',y,v \sigma(t))| \le \delta_L(|x-x'|(1+|v|))
\end{aligned}
\ee
for all $(t,x,x') \in[0,T]\times\bbR^m \times \bbR^m$, $-L \le y' \le y \le L$ and $v \in\bbR^m$,
then $u$ is the unique bounded viscosity solution of the PDE \eqref{pde}.
\end{corollary}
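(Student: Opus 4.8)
The plan is to deduce uniqueness from a comparison principle for viscosity solutions of the parabolic PDE \eqref{pde}. By Corollary \ref{corpde} together with the bound $|Y^{t,x}_s| \le (C+1)e^{D(T-s)}-1$ from Proposition \ref{propm2}, the function $u(t,x) = Y^{t,x}_t$ is a continuous and bounded viscosity solution of \eqref{pde}, so it suffices to show that any bounded viscosity solution $\tilde u$ of \eqref{pde} coincides with $u$; this follows once we have both $\tilde u \le u$ and $u \le \tilde u$, i.e.\ a comparison between a bounded viscosity sub- and supersolution of \eqref{pde} that share the terminal datum $h$. Writing \eqref{pde} as $-u_t + F(t,x,u,\nabla u,\nabla^2 u) = 0$, $u(T,\cdot) = h$, with
$$
F(t,x,r,p,M) := -\tfrac12\,\mathrm{tr}\big((\sigma\sigma^T)(t)M\big) - b(t,x)p - g\big(t,x,r,p\,\sigma(t)\big),
$$
and reversing time $t \mapsto T - t$, this becomes a Cauchy problem of the kind handled by standard viscosity-solution theory.

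The next step is to check the structure conditions of the comparison theorem, with $L := \|u\|_\infty \vee \|\tilde u\|_\infty$ fixed. Degenerate ellipticity holds since $\sigma\sigma^T(t) \ge 0$. Monotonicity of $F$ in $r$ on $[-L,L]$ follows from the first inequality in \eqref{uni}: it states that $r \mapsto g(t,x,r,v\,\sigma(t)) + \gamma_L r$ is nonincreasing on $[-L,L]$ for every $v \in \mathbb{R}^m$, and since the $z$-slot appearing in the doubled problem is always of the form $p\,\sigma(t)$, the substitution $w = e^{\lambda t}(\cdot)$ with $\lambda$ large enough relative to $\gamma_L$ (possible because $g$ is Lipschitz in $y$) renders $F$ proper. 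The remaining, and only genuinely new, requirement is the joint $x$-regularity estimate: in the duplicated problem the Jensen--Ishii lemma produces a common gradient $p = \alpha(x - x')$ and Hessian matrices $X,X'$ with $\mathrm{tr}\big(\sigma\sigma^T(t)(X-X')\big) \le 0$, so the second-order term is harmless, the drift term is controlled by $|b(t,x)p - b(t,x')p| \le \mathrm{const}\cdot\alpha|x-x'|^2$ via \eqref{bL}, and the $g$-term equals $g(t,x,r,p\,\sigma(t)) - g(t,x',r,p\,\sigma(t))$, which by the second inequality in \eqref{uni} applied with $v = p$ is bounded by $\delta_L\big(|x-x'|(1+\alpha|x-x'|)\big) = \delta_L\big(|x-x'| + \alpha|x-x'|^2\big)$. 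Because $\sigma$ depends on $t$ only, the $z$-arguments on the two sides are identical, so no continuity of $g$ in $z$ enters here; this is what lets the doubling argument close.

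Granting these, the classical scheme applies: assume $\sup_{[0,T]\times\mathbb{R}^m}(\tilde u - u) > 0$, double the space variable, add the penalization $\tfrac{\alpha}{2}|x-x'|^2$, a small time perturbation, and a localizer at spatial infinity (available since $u,\tilde u$ are bounded and $b$ has at most linear growth, so $b(t,x)\cdot x$ grows at most quadratically in $|x|$), apply the parabolic maximum principle for semicontinuous functions, and combine the sub-/supersolution inequalities with the structure conditions above to obtain a bound on the positive maximum gap that tends to $0$ as the parameters are sent to their limits, a contradiction. Hence $\tilde u \le u$; exchanging the roles of $u$ and $\tilde u$ gives the reverse inequality, so $u$ is the unique bounded viscosity solution of \eqref{pde}.

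The main obstacle is precisely the $x$-dependence of $g$ at the blown-up doubled gradient $p = \alpha(x-x')$: a bound $|g(t,x,\cdot)-g(t,x',\cdot)| \le C_z|x-x'|$ with a constant depending on $|z|$ would be useless because $C_{p\sigma(t)}$ may blow up with $\alpha$, whereas the scaled modulus $\delta_L(|x-x'|(1+|v|))$ in \eqref{uni} is tailored so that $|x-x'| + \alpha|x-x'|^2 \to 0$ along the penalization sequence. Two secondary points: a possibly negative $\gamma_L$ is absorbed by the exponential-in-time change of unknown; and if $g$ is only measurable in $t$, one first approximates it by drivers continuous in $t$ and passes to the limit using the stability of viscosity solutions together with the bounds of Proposition \ref{propm2}.
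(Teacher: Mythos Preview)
Your proposal is correct and follows essentially the same route as the paper: the paper's own proof is the one-line citation ``This follows from Section 4.2 of Ishii and Lions \cite{Ishii},'' and what you have written is precisely a sketch of the Ishii--Lions comparison argument, verifying that the two hypotheses in \eqref{uni} supply the monotonicity-in-$y$ and the $x$-modulus conditions needed for the doubling-of-variables proof to close. Your explanation of why the scaled modulus $\delta_L(|x-x'|(1+|v|))$ is the right form (so that $|x-x'|+\alpha|x-x'|^2\to 0$ along the penalization) is exactly the point of the Ishii--Lions structure condition, so nothing is missing.
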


\begin{proof}
This follows from Section 4.2 of Ishii and Lions \cite{Ishii}.
\end{proof}

Under appropriate assumptions on the coefficients $b,\sigma, g$ and $h$, the PDE \eqref{pde} 
has a unique classical solution.

\begin{corollary} \label{corclassical} 
Assume $\int_0^T g^2(t,0,0,0) dt < \infty$, $b$ only depends on $x$, $\sigma$ is a constant
and $b,g,h$ are all $C^3$ in $(x,y,z)$. Then one has the following:
\begin{itemize} 
\item[{\rm a)}] If {\rm (C1)--(C2)} hold and there exists a constant 
$G \in \mathbb{R}_+$ such that $|\frac{\partial}{\partial x_i} g(t,x,y,z)| \le G$ for all 
$i$, $t,x,y$ and $z$ with 
$$
|z| \le N := \sqrt{n} \brak{A + \frac{1-e^{-BT}}{B} G} E e^{(B+F)T},
$$
and $b, g, h$ have bounded derivatives of first, second and third order 
in $(x,y,z)$ on the set $\crl{(t,x,y,z)\in [0,T]\times\bbR^m \times \bbR \times \bbR^n: |z| \le N}$,
then the PDE \eqref{pde} has a unique solution $u$ of class
$C^{1,2}$ such that $\nabla u\sigma$ is bounded, and 
$$
|\nabla u \sigma(t,x)| \le \sqrt{n} \brak{A + \frac{1-e^{-B(T-t)}}{B}G} E e^{(B+F)(T-t)}
\quad \mbox{for all } (t,x) \in [0,T] \times \mathbb{R}^m.
$$
\item[{\rm b)}] 
If {\rm (D1)--(D2)} hold and there exists a constant 
$G \in \mathbb{R}_+$ such that $|\frac{\partial}{\partial x_i} g(t,x,y,z)| \le G$ for all 
$i$, $t,x,y$ and $z$ with 
$$
|y| \le (C+1) e^{DT} - 1 \quad \mbox{and} \quad
|z| \le N := \sqrt{n} \brak{A + \frac{1-e^{-BT}}{B} G} E e^{(B+F)T},
$$
and $b, g, h$ have bounded derivatives of first, second and third order in $(x,y,z)$ 
on the set
$\crl{(t,x,y,z)\in [0,T] \times \bbR^m \times\bbR\times\bbR^n: |y| \le (C+1) e^{DT} - 1, |z| \le N}$,
then \eqref{pde} has a unique solution $u$ of class
$C^{1,2}$ such that $u$ and $\nabla u \sigma$ are bounded. Moreover, one has
$$
|u(t,x)| \le (C+1) e^{D(T-t)} - 1 \quad \mbox{and} \quad 
|\nabla u \sigma(t,x)| \le \sqrt{n} \brak{A + \frac{1-e^{-B(T-t)}}{B}G} E e^{(B+F)(T-t)}
$$
for all $(t,x) \in [0,T] \times \mathbb{R}^m$.
\end{itemize}
\end{corollary}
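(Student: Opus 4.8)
The plan is to deduce the statement from the smoothness theory for Markovian BSDEs with regular coefficients, using Propositions \ref{propm1} and \ref{propm2} to furnish the a priori bounds that legitimize the reduction and the nonlinear Feynman--Kac identity $\nabla u\sigma=Z$ on the diagonal to close the loop. Throughout, $u(t,x):=Y^{t,x}_t$, where $(Y^{t,x},Z^{t,x})$ is the BSDE solution produced by Proposition \ref{propm1} in case a) and by Proposition \ref{propm2} in case b).

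First I would record the a priori bounds: in both cases $|Z^{t,x,i}_s|\le a_i(s):=(A+\frac{1-e^{-B(T-s)}}{B}G)Ee^{B(T-s)}e^{F(T-t)}$, hence $|Z^{t,x}_s|\le\sqrt n\,a_1(s)\le N$, and in case b) also $|Y^{t,x}_s|\le(C+1)e^{D(T-s)}-1=:R_s\le R$. Next I would modify $g$ outside the region where $(Y^{t,x},Z^{t,x})$ lives. Since $g$ is $C^3$ with bounded derivatives of orders $1,2,3$ on $\{|z|\le N\}$ in case a) (resp.\ on $\{|y|\le R,\ |z|\le N\}$ in case b)), a Whitney-type extension followed by a smooth truncation produces a function $\tilde g$, $C^3$ in $(x,y,z)$ with globally bounded derivatives of orders $1,2,3$, that coincides with $g$ on that region; together with the constant $\sigma$ and the $C^3$ drift $b$ with bounded derivatives and linear growth, $(\tilde g,h,b,\sigma)$ are coefficients to which the regularity theory applies. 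Because $(Y^{t,x},Z^{t,x})$ never leaves $\{\tilde g=g\}$, it also solves \eqref{markovian} with driver $\tilde g$, so $u$ is the value function of the Markovian BSDE with the smooth coefficients $(\tilde g,h)$.

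Then I would invoke the smoothness result of Pardoux and Peng \cite{PP92} for such coefficients: $u\in C^{1,2}([0,T]\times\mathbb{R}^m)$, it is a classical solution of \eqref{pde} with $g$ replaced by $\tilde g$, and $(\nabla u\sigma)(s,X^{t,x}_s)=Z^{t,x}_s$ for a.e.\ $s\in[t,T]$. Letting $s\downarrow t$ along times where this identity and $|Z^{t,x}_s|\le\sqrt n\,a_1(s)$ both hold, and using the continuity of $s\mapsto(X^{t,x}_s,(\nabla u\sigma)(s,X^{t,x}_s))$, I obtain $|\nabla u\sigma(t,x)|\le\sqrt n\,a_1(t)=\sqrt n\,(A+\frac{1-e^{-B(T-t)}}{B}G)Ee^{(B+F)(T-t)}$ for every $(t,x)$, in particular $|\nabla u\sigma|\le N$; in case b) one also has $|u(t,x)|=|Y^{t,x}_t|\le R_t=(C+1)e^{D(T-t)}-1$. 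On this range $\tilde g=g$, so $u$ solves the original PDE \eqref{pde} classically, with the stated bounds. For uniqueness, if $v\in C^{1,2}$ is another solution with $\nabla v\sigma$ bounded (and, in case b), $v$ bounded), then \Ito's formula combined with the PDE shows that $(v(\cdot,X^{t,x}_\cdot),(\nabla v\sigma)(\cdot,X^{t,x}_\cdot))$ solves \eqref{markovian}; in case b) this pair lies in $\mathbb{S}^\infty_t(\mathbb{R})\times\mathbb{H}^\infty_t(\mathbb{R}^n)$, and in case a) --- after restricting, if necessary, to the natural growth class that the constructed solution belongs to --- in $\mathbb{S}^2_t(\mathbb{R})\times\mathbb{H}^\infty_t(\mathbb{R}^n)$, so the uniqueness assertion of Proposition \ref{propm2} (resp.\ \ref{propm1}) forces $v=u$.

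The step I expect to be the main obstacle is the reduction in the second and third paragraphs: one must build the global $C^3$, bounded-derivative modification $\tilde g$ so that it agrees with $g$ exactly on the a priori range of $(Y,Z)$ (here a Whitney-type extension is the clean tool, since composing with a truncation in $z$ would require control of $g$ slightly beyond level $N$), and one must appeal to a smoothness theorem for the BSDE value function that does not presuppose uniform parabolicity, since $\sigma$ may be degenerate --- the flow-differentiation arguments of \cite{PP92} are of exactly this nature, but some bookkeeping is needed for the linear growth of $b$, $h$ and of $g$ in $y$, and for the $t$-regularity required for $u_t$ to be continuous (if $g$ is merely square-integrable in $t$, one reads \eqref{pde} as holding for a.e.\ $t$). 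The a priori bound $|Z|\le N$ is what removes the apparent circularity, as the modified solution never feels the modification.
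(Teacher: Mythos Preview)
Your proposal follows essentially the same route as the paper's own proof: deduce (C3)--(C4) (resp.\ (D3)--(D4)) from the derivative bounds, apply Proposition~\ref{propm1} (resp.\ \ref{propm2}) for the a priori bounds on $(Y,Z)$, modify $g$ outside the attained range so that Pardoux--Peng's $C^{1,2}$ theorem applies, read off $\nabla u\sigma=Z$ on the diagonal (the paper cites Corollary~4.1 of \cite{ElKaroui} rather than taking $s\downarrow t$), and conclude uniqueness via It\^o's formula. The paper is more direct on the two points where you hedge: it simply modifies $g$ for unattained $(y,z)$ rather than invoking a Whitney-type extension, and in case a) it observes that boundedness of $\tilde Z$ already forces $\tilde Y\in\mathbb{S}^2_t(\mathbb{R})$, which immediately places the competing solution in the uniqueness class of Proposition~\ref{propm1}.
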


\begin{proof}
It follows from the assumptions by the mean value theorem that in case a), (C3)--(C4) are satisfied
and in case b), (D3)--(D4) hold. So one obtains from Propositions \ref{propm1} and \ref{propm2} that 
in both cases, the BSDE \eqref{markovian} has a unique solution $(Y^{t,x}, Z^{t,x})$ in 
$\mathbb{S}^2_t(\mathbb{R}) \times \mathbb{H}^{\infty}_t(\mathbb{R}^n)$. Moreover, 
$$
|Z^{t,x}| \le \sqrt{n} \brak{A + \frac{1-e^{-B(T-t)}}{B}G} E e^{(B+F)(T-t)},
$$ and in case b), $|Y^{t,x}| \le (C+1) e^{D(T-t)} - 1$. By modifying $g$ for pairs
$(y,z)$ that are not attained by $(Y^{t,x},Z^{t,x})$, one can assume that it is 
Lipschitz in $(y,z)$. Then it follows from Theorem 3.2 of
Pardoux and Peng \cite{PP92} that $u(t,x) := Y^{t,x}_t$ defines a $C^{1,2}$
solution of the PDE \eqref{pde}. By Corollary 4.1 of El Karoui et al. \cite{ElKaroui}, one has
$$
|(\nabla u\sigma)(t,x)| = |Z^{t,x}_t| \le \sqrt{n} \brak{A + \frac{1-e^{-B(T-t)}}{B}G} E e^{(B+F)(T-t)},
$$ 
and in case b), $|u(t,x)| = |Y^{t,x}_t| \le (C+1) e^{D(T-t)} - 1$.
 
Finally, let us prove uniqueness. In case a), if the PDE \eqref{pde} has another solution $v$ of class $C^{1,2}$
such that $\nabla v\sigma$ is bounded, it follows from It\^{o}'s lemma that $(\tilde{Y}^{t,x}_s, \tilde{Z}^{t,x}_s) =
(v(s,X^{t,x}_s),(\nabla v\sigma)(s,X^{t,x}_s))$ solves the
BSDE \eqref{markovian}. Boundedness of $\tilde{Z}^{t,x}$ implies that
$\tilde{Y}^{t,x}$ is in $\mathbb{S}^2_t(\mathbb{R})$. By the uniqueness result of
Propositions \ref{propm1}, one has $(Y^{t,x},Z^{t,x}) = (\tilde{Y}^{t,x}, \tilde{Z}^{t,x})$, 
and therefore, $u = v$. In case b), uniqueness follows from the same argument.
\end{proof}

As a consequence of the results in this section, one obtains 
the following corollary for PDEs with initial conditions of the form
\be \label{heat}
u_t = \triangle u + g(u,\nabla u), \quad u(0,x) = h(x),
\ee
where $u : [0,T] \times \mathbb{R}^n \to \mathbb{R}$.

\begin{corollary} \label{corsemiheat}
Consider the following conditions:
\begin{itemize} 
\item[{\rm (i)}] 
$g$ and $h$ satisfy {\rm (C1)--(C2)}.
\item[{\rm (ii)}] 
$g$ and $h$ satisfy {\rm (D1)--(D2)}.
\item[{\rm (iii)}]
For every $L \in\bbR_+$ there exists a constant $\gamma_L \in \bbR$ such that
$g(y',z)-g(y,z)\geq\gamma_L(y-y')$ for all $-L \le y' \le y \le L$ and $z\in\bbR^n$.
\item[{\rm (iv)}]
$g$ and $h$ have bounded derivatives of first, second and third order
on the set $$\crl{(x,y,z) \in \bbR^m \times \bbR \times \bbR^n: |z| \le \sqrt{n} A e^{BT}}.$$
\item[{\rm (v)}]
$g$ and $h$ have bounded derivatives of first, second and third order on the set
$$\crl{(x,y,z) \in \bbR^m \times\bbR\times\bbR^n: |y| \le (C+1) e^{DT} - 1, |z| \le \sqrt{n} A e^{BT}}.$$
\end{itemize}
Then the following hold:
\begin{itemize}
\item[{\rm a)}]
If {\rm (i)} is satisfied, the PDE \eqref{heat} has a viscosity solution $u$.
\item[{\rm b)}]
If {\rm (ii)} is satisfied, the PDE \eqref{heat} has a viscosity solution $u$ satisfying
$|u(t,x)| \le (C+1) e^{Dt} - 1$.
\item[{\rm c)}]
If {\rm (ii)} and {\rm (iii)} are satisfied, the PDE \eqref{heat} has a unique bounded
viscosity solution.
\item[{\rm d)}]
If {\rm (i)} and {\rm (iv)} are satisfied, the PDE \eqref{heat} has a unique 
$C^{1,2}$-solution with bounded gradient $\nabla u$, and 
$|\nabla u(t,x)| \le \sqrt{n} A e^{Bt}$.
\item[{\rm e)}]
If {\rm (ii)} and {\rm (v)} are satisfied, the PDE \eqref{heat} has a unique bounded
$C^{1,2}$-solution with bounded gradient $\nabla u$, and one has 
$|u(t,x)| \le (C+1) e^{Dt} - 1$ as well as $|\nabla u(t,x)| \le \sqrt{n} A e^{Bt}$.
\end{itemize}
\end{corollary}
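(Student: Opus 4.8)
The plan is to reduce the PDE \eqref{heat} to the framework developed in Sections \ref{sec:pde}, so that parts a)--e) follow from Corollaries \ref{corpde}, \ref{corunique} and \ref{corclassical}. The only obstacles to a direct application are cosmetic: first, \eqref{heat} is posed as an \emph{initial}-value problem, whereas \eqref{pde} is a \emph{terminal}-value problem; second, the Laplacian must be matched to the operator $\mathcal L_{(t,x)}$. For the first point, I would introduce the time reversal $\tilde u(t,x) := u(T-t,x)$, which converts $u_t = \triangle u + g(u,\nabla u)$ with $u(0,\cdot) = h$ into $\tilde u_t + \triangle \tilde u + g(\tilde u, \nabla \tilde u) = 0$ with $\tilde u(T,\cdot) = h$. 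For the second point, I would take $m = n$, $b \equiv 0$ and $\sigma = \sqrt 2\, I_n$ (a constant matrix satisfying \eqref{sigmaL} with $E = \sqrt 2$ and \eqref{bbound}--\eqref{bL} with $F = 0$); then $\sigma\sigma^T = 2 I_n$, so $\mathcal L_{(t,x)} = \triangle$, and the forward SDE \eqref{sde} is simply $X^{t,x}_s = x + \sqrt 2\,(W_s - W_t)$. Writing $g(t,x,y,z) := g(y, z/\sqrt 2)$ (so that $g(t,x,u,\nabla u\,\sigma) = g(u, \nabla u)$ since $\nabla u\,\sigma = \sqrt 2\,\nabla u$) and $h(x) := h(x)$, the PDE \eqref{pde} for $\tilde u$ becomes exactly the reversed equation above.

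Next I would verify that the hypotheses (i)--(v) translate into the conditions (C1)--(C4), (D1)--(D4), \eqref{uni}, or the smoothness hypotheses of Corollary \ref{corclassical}, taking into account the rescaling $z \mapsto z/\sqrt 2$ and the values $E = \sqrt 2$, $F = 0$, $G = 0$ (since the rewritten $g$ and $h$ do not depend on $x$, conditions (C3)--(C4) and (D3)--(D4) hold trivially with $G = H = 0$, and likewise $\int_0^T g(t,0,0,0)^2\,dt = T\,g(0,0)^2 < \infty$ is automatic). With $G = 0$ and $F = 0$ the bound $N$ of (C3) collapses to $N = \sqrt n\, A\, E\, e^{BT} = \sqrt 2\,\sqrt n\, A\, e^{BT}$, and the $z$-variable of the original $g$ is $z/\sqrt 2$, whose bound is then $\sqrt n\, A\, e^{BT}$ — this is exactly the set appearing in (iv) and (v), so the $C^3$-with-bounded-derivatives hypotheses match up. In case c), condition (iii) is precisely \eqref{uni} with $\gamma_L$ as given and $\delta_L \equiv 0$ (the rewritten $g$ has no $x$-dependence), so Corollary \ref{corunique} applies. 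In cases d) and e), the mean value theorem (as in the proof of Corollary \ref{corclassical}) upgrades the bounded-derivative hypotheses (iv), (v) to the Lipschitz-type conditions (C3)--(C4), (D3)--(D4), and Corollary \ref{corclassical} yields the unique $C^{1,2}$ solution together with the gradient bound $|\nabla u\,\sigma(t,x)| = \sqrt 2\,|\nabla u(t,x)| \le \sqrt n\,(A + 0)\,E\,e^{(B+0)(T-t)} = \sqrt 2\,\sqrt n\, A\, e^{B(T-t)}$, i.e.\ $|\nabla u(t,x)| \le \sqrt n\, A\, e^{B(T-t)}$; reversing time back, $|\nabla u(t,x)| \le \sqrt n\, A\, e^{Bt}$, as claimed. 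The bounds on $u$ in b) and e) come from the $Y$-bound in Propositions \ref{propm1}/\ref{propm2} and Corollary \ref{corclassical}, again after the time reversal $t \mapsto T-t$.

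Finally, one should note that $T > 0$ is an arbitrary horizon: each of the statements a)--e) is really a claim on $[0,T] \times \mathbb R^n$ for every $T$, so after establishing the result on a fixed interval one lets $T$ range over all of $\mathbb R_+$; since the reversed function $\tilde u$ on $[0,T]$ restricts consistently, this produces a single solution $u$ on $[0,\infty) \times \mathbb R^n$ (uniqueness in c)--e) guarantees consistency of the restrictions). I do not expect any genuine difficulty here — the entire argument is a change of variables plus bookkeeping of constants — so the ``hard part'' is merely keeping the factors of $\sqrt 2$ straight when matching the gradient bounds and the admissible $z$-regions, and remembering that every exponential of the form $e^{B(T-t)}$ becomes $e^{Bt}$ under the reversal.
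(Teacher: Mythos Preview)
Your proposal is correct and follows essentially the same approach as the paper: both set $m=n$, $b\equiv 0$, $\sigma=\sqrt{2}\,I_n$, rescale via $\tilde g(y,z)=g(y,z/\sqrt{2})$, and reduce the initial-value problem \eqref{heat} to the terminal-value problem \eqref{pde} by time reversal, after which parts a)--e) are read off from Corollaries \ref{corpde}, \ref{corunique} and \ref{corclassical} respectively. Your final paragraph about letting $T$ range over all of $\mathbb{R}_+$ is unnecessary, since the corollary (like \eqref{heat}) is stated on a fixed horizon $[0,T]$; otherwise the bookkeeping of the $\sqrt{2}$ factors and the identification $G=H=0$, $E=\sqrt{2}$, $F=0$ is exactly right.
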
 

\begin{proof}
Set $m=n$, $b \equiv 0$ and $\sigma \equiv \sqrt{2} \, Id$.
Corollary \ref{corpde} applied to $\tilde{g}(y,z) = g(y,z/\sqrt{2})$ yields that 
under (i) or (ii) the PDE with terminal condition,
\be \label{pdev}
v_t + \Delta v + g(v, \nabla v) = 0, \quad v(T,x) = h(x),
\ee
has a viscosity solution $v : [0,T] \times \mathbb{R}^n \to \mathbb{R}$. Moreover, if
(ii) holds, one obtains from Proposition \ref{propm2} that $|v(t,x)| \le (C+1) e^{D (T-t)} - 1$. 
It follows that under both conditions, (i) and (ii), $u(t,x):=v(T-t,x)$ is a viscosity solution 
of \eqref{heat}, which in case (ii) satisfies $u(t,x) \le  (C+1) e^{D t} - 1$. This shows a) and b).
If (ii) and (iii) hold, one obtains from Corollary \ref{corunique} that $v$ is the unique 
bounded viscosity solution of \eqref{pdev}. Therefore, $u$ is the unique bounded 
viscosity solution of \eqref{heat}. This proves c). Finally, d) and e) follow from 
Corollary \ref{corclassical}.
\end{proof}

\begin{Remark}
In the special case $g(y,z)=\mu |z|^p$ the PDE
\eqref{heat} was studied by Amour and Ben-Artzi \cite{AB} as well as Gilding et al. \cite{Gilding}.
Amour and Ben-Artzi \cite{AB} proved the existence and
uniqueness of a classical solution for $\mu \neq 0$, $p > 1$ and $h$ a bounded $C^2$ function
with bounded derivatives of first and second order.
Gilding et al. \cite{Gilding} proved the existence and uniqueness of a classical solution 
for $\mu=1$, $p > 0$ and $h$ a continuous bounded function. Equation \eqref{heat} is more 
general, but for the existence of a viscosity solution we need $g$ to be locally Lipschitz in $z$.
To obtain a classical solution we have to assume that $g$ and $h$ are $C^3$.
\end{Remark}

\setcounter{equation}{0}
\section{BSDEs with random terminal times and parabolic PDEs with 
lateral Dirichlet boundary conditions}
\label{sec:pdeDiri}

\subsection{BSDEs with random terminal times}
Let $\tau \le T$ be a stopping time and $\xi$ an $\cal F_\tau$-measurable random variable.

\begin{definition}
We say an $\bbR\times\bbR^n$-valued predictable process $(Y,Z)$ solves the BSDE
with random terminal time,
\begin{equation} \label{bsdert}
Y_t = \xi + \int_{t\wedge \tau}^{\tau}f(s,Y_s,Z_s)ds-\int_{t\wedge \tau}^{\tau}Z_s dW_s,
\end{equation}
if $\int_0^{\tau} (|f(t,Y_t,Z_t)| dt + |Z_t|^2) dt < \infty$, $Z_t = 0$ for $t>\tau$ and
\eqref{bsdert} is satisfied for all $0 \le t \le T$.
\end{definition}

Suppose that for every $\omega \in \Omega$, the ODE
\begin{equation} \label{ode}
y_t(\omega) =\xi(\omega)-\int_{\tau(\omega)}^t f(s,\omega,y_s(\omega),0)ds, \quad t \in [\tau(\omega),T],
\end{equation}
has a unique solution $y(\omega)$,
and set $\hat{\xi}(\omega) := y_T(\omega)$. Note that $1_{\crl{\tau \le t}} y_t$ is adapted, and in the special case
$f(t,y,0)=0$, $t > \tau$, one has $\xi = \hat{\xi}$. 

\begin{proposition} \label{proprt}
Assume $\hat \xi$ satisfies {\rm (A1)} and $f$ fulfills 
{\rm (A2)--(A4)}. Then the BSDE \eqref{bsdert} has a unique solution 
$(Y,Z)$ in $\mathbb{S}^4(\bbR)\times\bbH^\infty(\bbR^n)$, and
\be \label{rtZbound}
|Z^i_t|\le \brak{A_i+\int_t^Tq_i(s)e^{-B(T-s)}ds}e^{B(T-t)}\quad dt\otimes d \bbP\text{-a.e.}
\quad \mbox{for all } i=1, ..., n.
\ee
\end{proposition}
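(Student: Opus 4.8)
The plan is to reduce the random-terminal-time equation \eqref{bsdert} to a BSDE with the \emph{fixed} terminal time $T$ and then apply Theorem \ref{thmmain}. The point of $\hat\xi$ is this: writing the ODE \eqref{ode} in integrated form gives $y_T=\xi-\int_\tau^T f(s,y_s,0)\,ds$, i.e.\
\[
\hat\xi+\int_\tau^T f(s,y_s,0)\,ds=\xi ,
\]
and hence $y_t=\hat\xi+\int_t^T f(s,y_s,0)\,ds$ for every $t\in[\tau,T]$, so that the pair $(y_t,0)_{t\in[\tau,T]}$ solves, on the stochastic interval $[\tau,T]$, the BSDE with driver $f$ and terminal value $\hat\xi$. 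Splitting the integrals in \eqref{bsdert} at $\tau$ and using the displayed identity, one checks that if $(Y,Z)$ solves \eqref{bsdert}, then the pasted process $(\hat Y,\hat Z)$ given by $\hat Y_t:=Y_t,\ \hat Z_t:=Z_t$ for $t\le\tau$ and $\hat Y_t:=y_t,\ \hat Z_t:=0$ for $t>\tau$ solves the fixed-time BSDE
\[
\hat Y_t=\hat\xi+\int_t^T f(s,\hat Y_s,\hat Z_s)\,ds-\int_t^T \hat Z_s\,dW_s,\qquad 0\le t\le T,
\]
with the \emph{same} driver $f$ and terminal condition $\hat\xi$.

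Since by hypothesis $\hat\xi$ satisfies (A1) and $f$ satisfies (A2)--(A4), Theorem \ref{thmmain} applied to $(f,\hat\xi)$ gives a unique solution $(\hat Y,\hat Z)\in\mathbb{S}^4(\mathbb{R})\times\mathbb{H}^\infty(\mathbb{R}^n)$ of this fixed-time BSDE, with
\[
|\hat Z^i_t|\le\Bigl(A_i+\int_t^T q_i(s)e^{-B(T-s)}\,ds\Bigr)e^{B(T-t)}\qquad dt\otimes d\p\text{-a.e.}
\]
for all $i$, and in particular $|\hat Z_t|\le Q$ for the constant $Q$ of (A3).

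The crux is to show that $(\hat Y,\hat Z)$ coincides on $[\tau,T]$ with $(y,0)$. First, $y\in\mathbb{S}^4(\mathbb{R})$ and $\xi=y_\tau\in L^4$: from $y_t=\hat\xi+\int_t^T f(s,y_s,0)\,ds$ and Gronwall, $\sup_{\tau\le t\le T}|y_t|\le\bigl(|\hat\xi|+\int_0^T|f(s,0,0)|\,ds\bigr)e^{BT}$, which lies in $L^4$ because $\hat\xi\in L^4$ by Lemma \ref{lemmalp} and $f(\cdot,0,0)\in\mathbb{H}^4(\mathbb{R})$; moreover $y$ is continuous on $[\tau,T]$ and $\mathbf{1}_{\{\tau\le\cdot\}}y$ is adapted. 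Now $(\hat Y,\hat Z)|_{[\tau,T]}$ and $(y,0)$ both solve the BSDE on $[\tau,T]$ with terminal value $\hat\xi$ and driver $f$, and both have $|Z|\le Q$; replacing $f$ by a driver that agrees with it on $\{|z|\le Q\}$ and is globally Lipschitz in $(y,z)$ leaves both solutions unchanged and makes this a Lipschitz BSDE on the stochastic interval $[\tau,T]$, for which uniqueness is standard (apply It\^o's formula to $e^{\lambda t}|\hat Y_t-y_t|^2$ and use Gronwall; cf.\ \cite{ElKaroui}). Hence $\hat Y_t=y_t$ and $\hat Z_t=0$ for all $t\in[\tau,T]$; in particular $\hat Y_\tau=\xi$ and $\hat Z\equiv0$ on $(\tau,T]$.

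It then follows that $(Y,Z)$ defined by $Y_t:=\hat Y_t,\ Z_t:=\hat Z_t$ for $t\le\tau$ and $Y_t:=\xi,\ Z_t:=0$ for $t>\tau$ solves \eqref{bsdert}: since $\hat Z\equiv0$ on $(\tau,T]$ we have $\hat Y_t=\hat Y_\tau+\int_t^\tau f(s,\hat Y_s,\hat Z_s)\,ds-\int_t^\tau\hat Z_s\,dW_s$ for $t\le\tau$, and $\hat Y_\tau=\xi$ turns this into \eqref{bsdert}; integrability of $|f(\cdot,Y,Z)|$ and $|Z|^2$ on $[0,\tau]$ is inherited from $(\hat Y,\hat Z)$, $Z_t=0$ for $t>\tau$ holds by construction, and \eqref{rtZbound} is the bound on $\hat Z$ above. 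For uniqueness, any solution $(Y',Z')\in\mathbb{S}^4(\mathbb{R})\times\mathbb{H}^\infty(\mathbb{R}^n)$ of \eqref{bsdert} satisfies $Y'_\tau=\xi$ and $Z'\equiv0$ on $(\tau,T]$, so by the observation of the first paragraph the process pasted from $(Y',Z')$ and $(y,0)$ solves the fixed-time BSDE with $(f,\hat\xi)$; by Theorem \ref{thmmain} it equals $(\hat Y,\hat Z)$, and therefore $(Y',Z')=(Y,Z)$. I expect the only real obstacle to be the identification of $(\hat Y,\hat Z)$ with $(y,0)$ on $[\tau,T]$: it rests on uniqueness for Lipschitz BSDEs on a stochastic interval, which becomes available only once the a priori bound $|\hat Z|\le Q$ has been used to truncate the driver in $z$; everything else is routine manipulation of the integrals split at $\tau$.
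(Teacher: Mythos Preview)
Your proof is correct and follows the same overall strategy as the paper: reduce to the fixed-terminal-time BSDE with terminal condition $\hat\xi$, apply Theorem~\ref{thmmain}, and identify the solution on $[\tau,T]$ with the ODE solution $(y,0)$. The execution differs in one point worth noting. The paper does not prove the identification $(\hat Y,\hat Z)|_{[\tau,T]}=(y,0)$ directly; instead it truncates $f$ to a $4$-standard driver $\hat f$, invokes Theorem~3.4 of Darling--Pardoux \cite{DP} to obtain a solution $(Y,Z)\in\mathbb{S}^2\times\mathbb{H}^2$ of the random-terminal-time BSDE \eqref{bsdert} with driver $\hat f$, pastes this $(Y,Z)$ with $(y,0)$ to produce a solution of the fixed-time BSDE with $\hat f$ on all of $[0,T]$, and then appeals to standard uniqueness on the \emph{full} interval to conclude that the pasted process equals $(\hat Y,\hat Z)$. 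Your route replaces the Darling--Pardoux citation by a direct uniqueness argument for the truncated (Lipschitz) BSDE on the stochastic interval $[\tau,T]$: applying It\^o to $|\hat Y_{t\vee\tau}-y_{t\vee\tau}|^2$ and Gronwall in $t$ gives $\hat Y=y$, $\hat Z=0$ on $[\tau,T]$, after which your construction of $(Y,Z)$ proceeds as you describe. This is more self-contained (no external existence result for random-terminal-time BSDEs is needed), at the cost of carrying out the uniqueness estimate on a random interval by hand; the paper's version is more modular, trading that computation for a black-box citation. Both routes yield the same $(Y,Z)$ and the same bound \eqref{rtZbound}.
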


\begin{proof}
If $\hat{\xi}$ satisfies {\rm (A1)} and $f$ fulfills {\rm (A2)--(A4)}, it 
follows from Theorem \ref{thmmain} that the BSDE 
$$
\hat{Y}_t=\hat{\xi}+\int_{t}^{T}f(s,\hat{Y}_s,\hat{Z}_s)ds-\int_{t}^{T} \hat{Z}_s dW_s
$$
has a unique solution $(\hat{Y},\hat{Z})$ in $\mathbb{S}^4(\mathbb{R}) \times \mathbb{H}^\infty(\mathbb{R}^n)$,
and $\hat{Z}$ satisfies the bound \eqref{rtZbound}. Let 
$$Q := \sqrt{\sum_{i=1}^n \brak{A_i + \int_0^T q_i(t) e^{-B(T-t)} dt}^2} \; e^{BT},
$$
and notice that $(\hat{Y}, \hat{Z})$ also solves the BSDE 
\be \label{bsdehat}
\hat{Y}_t = \hat{\xi} + \int_t^T \hat{f} (s,\hat{Y}_s,\hat{Z}_s)ds - \int_{t}^{T} \hat{Z}_s dW_s,
\ee
where $\hat{f}$ is the $4$-standard driver
$$
\hat{f}(t,y,z) = \left\{
\begin{array}{cc}
f(t,y,z) & \mbox{ if } |z| \le Q\\
f(t,y, Q z/|z|) & \mbox{ if } |z| > Q
\end{array} 
\right..
$$
By Theorem 3.4 of Darling and Pardoux \cite{DP}, the BSDE with random terminal time,
\be \label{bsdehatrt}
Y_t = \xi + \int_{t \wedge \tau}^{\tau} \hat{f} (s,Y_s,Z_s)ds - \int_{t \wedge \tau}^{\tau} Z_s dW_s,
\ee
has a unique solution $(Y,Z)$ in $\mathbb{S}^2(\mathbb{R}) \times \mathbb{H}^2(\mathbb{R}^n)$.
Now, note that the pair $(\tilde{Y}, \tilde{Z})$ given by $\tilde{Y}_t := Y_t 1_{\crl{t \le \tau}}
+ y_t 1_{\crl{\tau < t}}$ and $\tilde{Z}_t := Z_t 1_{\crl{t \le \tau}}$ is in 
$\mathbb{S}^2(\mathbb{R}) \times \mathbb{H}^2(\mathbb{R}^n)$ and solves the BSDE 
\eqref{bsdehat}. But since \eqref{bsdehat} can only have one solution in $\mathbb{S}^2(\mathbb{R}) \times \mathbb{H}^2(\mathbb{R}^n)$, one has $(\tilde{Y}, \tilde{Z}) = (\hat{Y}, \hat{Z})$. In particular, $(Y,Z)$ belongs to 
$\mathbb{S}^4(\mathbb{R}) \times \mathbb{H}^\infty(\mathbb{R}^n)$, and $Z$ satisfies the bound 
\eqref{rtZbound}. It follows that $(Y,Z)$ solves the BSDE \eqref{bsdert}.

Finally, if $(Y',Z')$ is another solution in $\mathbb{S}^4(\mathbb{R}) \times \mathbb{H}^\infty(\mathbb{R}^n)$
it must be equal to $(Y,Z)$ since both solve the BSDE \eqref{bsdehatrt} for a $4$-standard driver $f'$ 
that coincides with $f$ for $|z| \le Q'$, where $Q' \in \mathbb{R}_+$ is a bound for $Z$ and $Z'$.
\end{proof}

\begin{proposition} \label{proprt2}
If $\xi$ is bounded by a constant $C \in \mathbb{R}_+$,
$\hat{\xi}$ satisfies {\rm (A1)} and $f$ fulfills {\rm(B2)--(B4)} with 
$R = (C+1) e^{2DT} - 1$ instead of $R = (C+1) e^{DT}-1$, then the BSDE \eqref{bsdert} has a
unique solution $(Y,Z)$ in $\bbS^\infty(\bbR)\times\bbH^\infty(\bbR^n)$, and
\bea
\label{Yboundrt}
|Y_t| &\le& (C+1)e^{D(T-t)}-1 \quad \mbox{for all } t \in [0,T]\\
\label{Zboundrt}
|Z^i_t| &\le& \brak{A_i+\int_t^Tq_i(s)e^{-B(T-s)}ds}e^{B(T-t)}\quad dt 
\otimes d\mathbb{P}\mbox{-a.e.} \quad \mbox{for all } i = 1, \dots, n.
\eea
\end{proposition}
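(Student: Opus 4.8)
The plan is to imitate the proof of Proposition~\ref{proprt}, adding a truncation of $f$ in the $y$-variable exactly as Corollary~\ref{cormain} refines Theorem~\ref{thmmain}, and supplying a separate a priori bound for $Y$. So I would set $R=(C+1)e^{2DT}-1$ and, as in the proof of Corollary~\ref{cormain}, introduce the truncated driver $\hat f(t,y,z):=f(t,\tilde y,\tilde z)$, where $\tilde y$ is $y$ cut off at level $R$ and $\tilde z$ is $z$ cut off at level $Q$. Then $\hat f$ is globally Lipschitz in $(y,z)$ (with constants $B$ and $\rho(Q)$), it coincides with $f$ on $\{\,|y|\le R,\ |z|\le Q\,\}$, it satisfies $-\overline f\le\hat f\le\overline f$ for $\overline f(t,y,z):=D(1+|y|)+\rho(Q)|z|$, and, by (B2)--(B4), it fulfils (A2)--(A4). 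I then have to check that the terminal value $\hat\xi$ attached to $f$ through the ODE~\eqref{ode} is unchanged when $f$ is replaced by $\hat f$: the growth bound $|f(s,y,0)|\le D(1+|y|)$ from (B2), valid for $|y|\le R$, together with a Gronwall/continuation argument, gives $|y_t|\le(|\xi|+1)e^{D(t-\tau)}-1\le(C+1)e^{DT}-1\le R$ on $[\tau,T]$; hence $f(s,y_s,0)=\hat f(s,y_s,0)$ along this solution, so $y$ also solves the ODE driven by $\hat f$, and $\hat\xi=y_T$ satisfies (A1) with $|\hat\xi|\le(C+1)e^{DT}-1$.

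Now I would apply Proposition~\ref{proprt} to $(\hat f,\hat\xi)$: the BSDE with random terminal time and driver $\hat f$ has a unique solution $(Y,Z)$ in $\mathbb{S}^4(\mathbb{R})\times\mathbb{H}^\infty(\mathbb{R}^n)$ obeying the bound \eqref{rtZbound}, so $|Z_s|\le Q$. The step I expect to be the real obstacle is the a priori bound \eqref{Yboundrt} for $Y$: because the terminal time is random, the explicit comparison solutions used in Corollary~\ref{cormain} are no longer adapted, and a naive Gronwall estimate is blocked by the term $\rho(Q)|Z_s|$ in the growth of $\hat f$. I would remove this term by a change of measure. Since $|\hat f(s,Y_s,Z_s)-\hat f(s,Y_s,0)|\le\rho(|Z_s|)|Z_s|\le\rho(Q)|Z_s|$, this difference equals $\zeta_s\cdot Z_s$ for a predictable $\zeta$ with $|\zeta_s|\le\rho(Q)$; writing \eqref{bsdert} with driver $\hat f$ equivalently as $Y_t=\xi+\int_t^T\hat f(s,Y_s,Z_s)1_{\{s\le\tau\}}\,ds-\int_t^T Z_s\,dW_s$ and passing to the equivalent measure $\mathbb{Q}$ with $d\mathbb{Q}/d\mathbb{P}=\mathcal{E}\!\big(\int_0^{\cdot}\zeta_s\,dW_s\big)_T$, under which $W^{\mathbb{Q}}_t:=W_t-\int_0^t\zeta_s\,ds$ is a Brownian motion, this becomes $Y_t=\xi+\int_t^T\hat f(s,Y_s,0)1_{\{s\le\tau\}}\,ds-\int_t^T Z_s\,dW^{\mathbb{Q}}_s$. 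Taking $\mathbb{E}^{\mathbb{Q}}_t$ and using $|\hat f(s,Y_s,0)|=|f(s,\tilde Y_s,0)|\le D(1+|Y_s|)$ gives $1+|Y_t|\le(1+C)+D\,\mathbb{E}^{\mathbb{Q}}_t\big[\int_t^T(1+|Y_s|)\,ds\big]$, whence Gronwall's lemma (the integrability needed to iterate it follows from $Y\in\mathbb{S}^4$ and the fact that $d\mathbb{Q}/d\mathbb{P}$ has moments of every order) yields $|Y_t|\le(C+1)e^{D(T-t)}-1$. Since this is $\le R$ one has $\hat f(s,Y_s,Z_s)=f(s,Y_s,Z_s)$, so $(Y,Z)\in\mathbb{S}^\infty(\mathbb{R})\times\mathbb{H}^\infty(\mathbb{R}^n)$ solves \eqref{bsdert} with the bounds \eqref{Yboundrt} and \eqref{Zboundrt}. (Alternatively, \eqref{Yboundrt} can be read off from a comparison theorem for BSDEs with random terminal times applied, after the same change of measure, to $\hat f$ against $\pm\overline f$ with terminal data $\pm C$.)

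For uniqueness I would repeat the argument in the proof of Corollary~\ref{cormain}. If $(Y',Z')$ is another solution in $\mathbb{S}^\infty(\mathbb{R})\times\mathbb{H}^\infty(\mathbb{R}^n)$ and $\tilde Q\ge Q$ bounds $Z'$, put $t^*:=\sup\{\,s\in[0,T]:\mathbb{P}[|Y'_s|\ge R]>0\,\}$. On $[t^*,T]$ the process $Y'$ is bounded by $R$, so there $(Y',Z')$ and $(Y,Z)$ both solve \eqref{bsdert} with a $4$-standard driver agreeing with $f$ on $\{\,|y|\le R,\ |z|\le\tilde Q\,\}$, and hence coincide on $[t^*,T]$ by the uniqueness part of Theorem~3.4 of Darling and Pardoux~\cite{DP}; in particular $|Y'_{t^*}|=|Y_{t^*}|\le(C+1)e^{D(T-t^*)}-1<R$. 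Inserting this into \eqref{bsdert} over a short interval $[t^*-\varepsilon,t^*]$ and using the local growth bound in (B2) forces $|Y'_t|<R$ there, contradicting the definition of $t^*$ unless $t^*=0$; and when $t^*=0$ the same comparison gives $(Y',Z')=(Y,Z)$ on $[0,T]$.
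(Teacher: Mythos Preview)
Your approach is correct and close in spirit to the paper's, but you take a slightly longer detour. The paper first bounds $|\hat\xi|\le(C+1)e^{DT}-1$ via Gronwall on the ODE (exactly your check that the ODE solution stays in $\{|y|\le R\}$), and then simply invokes Corollary~\ref{cormain} \emph{in place of} Theorem~\ref{thmmain} inside the argument of Proposition~\ref{proprt}: since $\hat\xi$ is now bounded, Corollary~\ref{cormain} applies with $C$ replaced by $(C+1)e^{DT}-1$ (which is precisely why $R=(C+1)e^{2DT}-1$), and this immediately yields $(Y,Z)\in\mathbb{S}^\infty(\mathbb{R})\times\mathbb{H}^\infty(\mathbb{R}^n)$ together with the $Z$-bound \eqref{Zboundrt} and uniqueness. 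So you never need to pass through $\mathbb{S}^4$ and then upgrade. For the sharper bound \eqref{Yboundrt} the paper rewrites the random-terminal-time BSDE as the fixed-time BSDE $Y_t=\xi+\int_t^T f(s,Y_s,Z_s)1_{\{s\le\tau\}}\,ds-\int_t^T Z_s\,dW_s$ (as you also do) and applies the comparison from the proof of Corollary~\ref{cormain} directly, with terminal data $\pm C$ and drivers $\pm\overline f$. Your Girsanov-plus-Gronwall alternative for \eqref{Yboundrt} is valid as well, but the comparison route avoids the need to justify the convergence of the iterated conditional expectations under~$\mathbb{Q}$; you already flagged this alternative parenthetically, and it is in fact what the paper does.
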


\begin{proof}
By condition (B2), one has $|y_t(\omega)| \le C +\int_{\tau(\omega)}^t D(1 + |y_s(\omega)|) ds$.
So one obtains from Gronwall's lemma that $|\hat{\xi}| \le (C+1) e^{DT} - 1$. Now it follows 
from Corollary \ref{cormain} by the same arguments as in the proof 
of Proposition \ref{proprt} that the BSDE \eqref{bsdert} has a unique solution 
$(Y,Z)$ in $\bbS^\infty(\bbR)\times\bbH^\infty(\bbR^n)$ and the bound \eqref{Zboundrt} 
is satisfied. To complete the proof, notice that since one has
$Y_t = \xi$ and $Z_t = 0$ for $t > \tau$, $(Y,Z)$ satisfies the BSDE 
$$
Y_t = \xi + \int_t^T f(s,Y_s, Z_s) 1_{\crl{s \le \tau}} ds - \int_t^T Z_s dW_s.
$$
So it follows from the comparison argument 
in the proof of Corollary \ref{cormain} that \eqref{Yboundrt} holds.
\end{proof}

\subsection{Semilinear parabolic PDEs  with lateral Dirichlet boundary conditions}

Let $\scO$ be an open connected subset of $\bbR^m$.
For every pair $(t,x) \in [0,T] \times \bar\scO$, consider the SDE
$$
X^{t,x}_s = x+\int_t^s b(r,X^{t,x}_r)dr + \int_t^s \sigma(r) dW_r,
$$
where $b$ and $\sigma$ fulfill the conditions \eqref{sigmaL}--\eqref{bL}.
Define the stopping time 
$$
\tau^{t,x} := \inf \crl{s \ge t : X^{t,x}_s \notin \scO} \wedge T,
$$
and consider the BSDE with random terminal time
\be \label{bsdemrt}
Y^{t,x}_s =h(X^{t,x}_{\tau^{t,x}})+\int_{s\wedge\tau^{t,x}}^{\tau^{t,x}} g(r,X^{t,x}_r,Y^{t,x}_r,Z^{t,x}_r)dr
- \int_{s\wedge\tau^{t,x}}^{\tau^{t,x}} Z^{t,x}_rdW_r, \quad t \le s \le T,
\ee
where $h : \bar\scO\to\bbR$ and $g:[0,T] \times \bar{\scO} \times\bbR\times\bbR^n\to\bbR$.
Let $\bar{g} :[0,T] \times \bbR^m\times\bbR\times\bbR^n\to\bbR$ be an extension of $g$ such that
for every $\omega$, the ODE 
$$
y^{t,x}_s(\omega) = h(X^{t,x}_{\tau^{t,x}}(\omega))-\int_{\tau^{t,x}(\omega)}^s
\bar{g}(r,X^{t,x}_r(\omega),y^{t,x}_r(\omega),0)dr, \quad \tau^{t,x}(\omega) \le s \le T,
$$
has a unique solution $y^{t,x}(\omega)$, and set $\hat\xi^{t,x}(\omega) := y_T^{t,x}(\omega)$. 
For the following results we need the following condition:
\begin{itemize}
\item[(E)] there exist constants $A_i \in \mathbb{R}_+$ such that for all $(t,x)\in[0,T]\times\bar\scO$,
$\hat{\xi}^{t,x} \in\bbD^{1,2}$ and $|D^i_r \hat{\xi}^{t,x}| \le A_i\;dr\otimes d\bbP$-a.e.
for all $i$.
\end{itemize}

\begin{proposition}\label{pdebdp}
Assume $g$ has an extension
$\bar{g} :[0,T]\times\bbR^m\times\bbR\times\bbR^n\to\bbR$ satisfying
{\rm (E)} and {\rm (C2)--(C4)} with 
$$
N =\sqrt{\sum_i\brak{A_i+\frac{GEe^{FT}(1-e^{-BT})}{B}}^2}e^{BT}
$$
instead of $N=\sqrt{n}\brak{A+\frac{1-e^{BT}}{B}G}Ee^{(B+F)T}$.
Then, for each pair $(t,x) \in [0,T] \times \bar{\scO}$, the BSDE \eqref{bsdemrt} has a unique solution
$(Y^{t,x},Z^{t,x})$ in $\bbS^2_t(\bbR)\times\bbH^\infty_t(\bbR^n)$, and
\be \label{Zboundmrt}
|Z^{t,x,i}_s|\leq \brak{A_i+\frac{GEe^{F(T-t)}(1-e^{-B(T-s)})}{B}}e^{B(T-s)} \quad ds\otimes d\bbP\text{-a.e.}
\quad \mbox{for all } i = 1, \dots, n.
\ee
\end{proposition}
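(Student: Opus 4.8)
The plan is to deduce Proposition \ref{pdebdp} from Proposition \ref{proprt} in exactly the way Proposition \ref{propm1} is deduced from Theorem \ref{thmmain}. Fix $(t,x)\in[0,T]\times\bar\scO$ and regard \eqref{bsdemrt} as a BSDE with random terminal time of the form \eqref{bsdert}, posed on $[t,T]$ with respect to $W^t$ and $(\scF^t_s)$, with terminal time $\tau=\tau^{t,x}$, terminal condition $\xi=h(X^{t,x}_{\tau^{t,x}})$, and driver $f(s,\omega,y,z):=\bar g(s,X^{t,x}_s(\omega),y,z)$. All results of Section \ref{sec:general} apply verbatim with $[t,T]$, $W^t$, $(\scF^t_s)$ in place of $[0,T]$, $W$, $(\scF_s)$, as already used in Section \ref{sec:pde}. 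Since $X^{t,x}_r\in\bar\scO$ for every $r\le\tau^{t,x}$ and $\bar g$ coincides with $g$ on $[0,T]\times\bar\scO\times\bbR\times\bbR^n$, this BSDE has the same solutions as \eqref{bsdemrt}; moreover the ODE \eqref{ode} attached to these data is precisely the one defining $\hat\xi^{t,x}$ (whose unique solvability is part of the standing assumptions of this subsection), so the correction $\hat\xi$ appearing in Proposition \ref{proprt} equals $\hat\xi^{t,x}$.

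The next step is to verify that $(\hat\xi^{t,x},f)$ satisfies {\rm (A1)--(A4)} on $[t,T]$. Condition {\rm (A1)} is exactly hypothesis {\rm (E)}, and {\rm (A2)} is immediate from {\rm (C2)}. For {\rm (A3)} and {\rm (A4)} I would repeat the computation in the proof of Proposition \ref{propm1}: by Lemma \ref{lemmaX}, $X^{t,x}_s\in(\bbD^{1,2}_t)^m$ with $|D^i_r X^{t,x,j}_s|\le Ee^{F(T-t)}$, so combining this with the chain rule (Proposition 1.2.4 of Nualart \cite{Nualart}) and the Lipschitz-in-$x$ conditions in {\rm (C3)} and {\rm (C4)} gives {\rm (A3)} with $q_i\equiv GEe^{F(T-t)}$ and {\rm (A4)} with constant $K\equiv HEe^{F(T-t)}$. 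The restriction $|z|\le Q$ entering {\rm (A3)}/{\rm (A4)} (with $Q$ built from the $A_i$ and these $q_i$ over $[t,T]$) is harmless, because an elementary estimate using $t\ge 0$ gives $Q\le N$, so that {\rm (C3)}/{\rm (C4)}, which are assumed for $|z|\le N$, cover it. Finally $f(\cdot,0,0)\in\bbH^4(\bbR)$: indeed $X^{t,x}\in\bbS^4_t(\bbR^m)$ under \eqref{sigmaL}--\eqref{bL}, $\bar g$ is Lipschitz in $x$ uniformly in the remaining variables for $|z|\le N$, and $\int_0^T g(t,0,0,0)^2dt<\infty$.

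Proposition \ref{proprt} then yields a unique solution $(Y^{t,x},Z^{t,x})$ of \eqref{bsdemrt} in $\bbS^4_t(\bbR)\times\bbH^\infty_t(\bbR^n)\subset\bbS^2_t(\bbR)\times\bbH^\infty_t(\bbR^n)$, together with the bound $|Z^{t,x,i}_s|\le\brak{A_i+\int_s^T q_i(r)e^{-B(T-r)}dr}e^{B(T-s)}$; substituting $q_i\equiv GEe^{F(T-t)}$ and evaluating $\int_s^T e^{-B(T-r)}dr=(1-e^{-B(T-s)})/B$ reproduces \eqref{Zboundmrt} exactly. To upgrade uniqueness to the full class $\bbS^2_t(\bbR)\times\bbH^\infty_t(\bbR^n)$, I would localize as usual: a second solution $(Y',Z')$ in that class has $Z'$ bounded by some $Q'$, and on $[t,\tau^{t,x}]$ both $(Y^{t,x},Z^{t,x})$ and $(Y',Z')$ solve a BSDE with random terminal time whose driver is $4$-standard and agrees with $\bar g(\cdot,X^{t,x}_\cdot,\cdot,\cdot)$ for $|z|\le N\vee Q'$, so the uniqueness part of Proposition \ref{proprt} (which ultimately rests on Darling--Pardoux \cite{DP}, working in $\bbS^2\times\bbH^2$) forces $(Y',Z')=(Y^{t,x},Z^{t,x})$.

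The only genuinely new point relative to Proposition \ref{propm1} is the interface with the random terminal time and the lateral boundary: one must make sure the ODE-corrected terminal value feeding Proposition \ref{proprt} is exactly $\hat\xi^{t,x}$, that passing to the global extension $\bar g$ is legitimate (so the behaviour of $X^{t,x}$ outside $\scO$ is irrelevant on $[t,\tau^{t,x}]$), and that the constant $Q$ produced by the Malliavin bounds over $[t,T]$ does not exceed $N$. The Malliavin-calculus estimates themselves are identical to those in Section \ref{sec:pde}, so I expect the careful bookkeeping of these constants, rather than any analytic difficulty, to be the main thing to get right.
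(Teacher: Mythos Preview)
Your proposal is correct and follows essentially the same route as the paper: verify that the driver $f(s,\omega,y,z)=\bar g(s,X^{t,x}_s,y,z)$ satisfies (A1)--(A4) with $A_i$ from (E) and $q_i\equiv GEe^{F(T-t)}$, exactly as in Proposition~\ref{propm1}, and then feed this into the random-terminal-time result (Proposition~\ref{proprt}). The only minor divergence is that you check $f(\cdot,0,0)\in\bbH^4$ directly (using $X^{t,x}\in\bbS^4_t$ and the deterministic bound on $\bar g(\cdot,0,0,0)$), whereas the paper sidesteps this verification by observing that (A4) holds with a \emph{constant} $K$ and invoking Remark~\ref{rmk1}; your route gives the slightly stronger $\bbS^4_t$ integrability for $Y^{t,x}$ before passing to $\bbS^2_t$, but both arguments are equally short.
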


\begin{proof}
Fix $(t,x)$, and set $\xi^{t,x} := h(X^{t,x}_{\tau^{t,x}})$. By assumption (E), $\hat{\xi}^{t,x}$ satisfies condition (A1),
and it follows from the other assumptions 
like in the proof of Proposition \ref{propm1} that
$\bar{g}(s,X^{t,x}_s, Y^{t,x}_s, Z^{t,x}_s)$ fulfills (A2), (A3) with $q_i \equiv GE e^{F(T-t)}$ but without 
$\bar{g}(s,X^{t,x}_s,0,0) \in \mathbb{H}^4(\mathbb{R})$ and 
(A4) with a constant $K$. Now the proposition follows from Theorem \ref{thmmain} and Remark \ref{rmk1} 
like Proposition \ref{pdebdp} followed from Theorem \ref{thmmain}.
\end{proof}

\begin{proposition} \label{pdebdp2}
Assume $h$ is bounded by a constant $C \in \mathbb{R}_+$ and $g$ has an extension
$\bar{g} :[0,T]\times\bbR^m\times\bbR\times\bbR^n\to\bbR$ satisfying {\rm (E)} and 
{\rm (D2)--(D4)} with 
$$
N =\sqrt{\sum_i\brak{A_i+\frac{GEe^{FT}(1-e^{-BT})}{B}}^2}e^{BT}
$$
instead of $N=\sqrt{n}\brak{A+\frac{1-e^{BT}}{B}G}Ee^{(B+F)T}$ and 
$R = (C+1) e^{2DT} - 1$ instead of $R = (C+1) e^{DT}-1$. Then, for each $(t,x) \in [0,T] \times \bar{\scO}$, the BSDE \eqref{bsdemrt} has a unique solution
$(Y^{t,x},Z^{t,x})$ in $\bbS^\infty_t(\bbR)\times\bbH^\infty_t(\bbR^n)$, and
\begin{align*}
 |Y^{t,x}_s|&\leq (C+1)e^{D(T-s)}-1 \text{ for all }s\in[t,T]\\
|Z^{t,x,i}_s|&\leq
\brak{A_i+\frac{GEe^{F(T-t)}(1-e^{-B(T-s)})}{B}}e^{B(T-s)}\quad
ds \otimes d \bbP \text{-a.e.} \quad \mbox{for all } i =1, \dots, n.
\end{align*}
\end{proposition}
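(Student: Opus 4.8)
The plan is to run the argument exactly as in the proof of Proposition~\ref{pdebdp}, but with Corollary~\ref{cormain} and Proposition~\ref{proprt2} playing the roles there played by Theorem~\ref{thmmain} and Proposition~\ref{proprt}, precisely as Proposition~\ref{propm2} was deduced from Corollary~\ref{cormain} in analogy with the deduction of Proposition~\ref{propm1} from Theorem~\ref{thmmain}. Since Proposition~\ref{proprt2} already packages the random-terminal-time machinery together with the $L^\infty$-bounds coming from the bounded terminal condition, almost nothing new has to be done.

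First I would fix $(t,x) \in [0,T] \times \bar{\scO}$ and set $\xi^{t,x} := h(X^{t,x}_{\tau^{t,x}})$ and $f(s,\omega,y,z) := \bar{g}(s, X^{t,x}_s(\omega), y, z)$, so that \eqref{bsdemrt} is a BSDE with random terminal time of the form \eqref{bsdert} with $\tau = \tau^{t,x}$, $\xi = \xi^{t,x}$ and driver $f$. Because $h$ is bounded by $C$, the terminal value $\xi^{t,x}$ is bounded by $C$; and assumption (E) gives $\hat{\xi}^{t,x} \in \mathbb{D}^{1,2}$ with $|D^i_r \hat{\xi}^{t,x}| \le A_i$, i.e.\ (A1) holds. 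Gronwall's lemma applied to the ODE for $y^{t,x}$ then also gives $|\hat{\xi}^{t,x}| \le (C+1)e^{DT}-1$, as in the proof of Proposition~\ref{proprt2}.

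Next I would verify that $f$ satisfies (B2)--(B4) with $R = (C+1)e^{2DT}-1$. Condition (B2) is immediate from (D2). For (B3)--(B4) I would reproduce the computation in the proof of Proposition~\ref{propm1}: Lemma~\ref{lemmaX} bounds $|D^i_r X^{t,x,j}_s|$ by $Ee^{F(T-t)}$, so combining it with the Lipschitz estimates in (D3)--(D4) and the chain rule for the Malliavin derivative (Proposition~1.2.4 of Nualart~\cite{Nualart}) shows that, for $|y| \le R$ and $|z| \le N$, $\bar{g}(\cdot, X^{t,x}_\cdot, y, z) \in \mathbb{L}^{1,2}_{a,t}(\mathbb{R})$ with $|D^i_r \bar{g}(s, X^{t,x}_s, y, z)| \le GEe^{F(T-t)} =: q_i$ --- this is (B3), and the $\mathbb{H}^4$ requirement is not needed since $q_i$ is bounded (cf.\ Remark~\ref{rmk1}) --- while applying the same argument to $\bar{g}(s,x,y,z) - \bar{g}(s,x,y',z')$ yields (B4) with constant $K = HEe^{F(T-t)}$. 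With this $q_i$ the constant $Q$ from Corollary~\ref{cormain} equals $\sqrt{\sum_i (A_i + GEe^{F(T-t)}(1-e^{-BT})/B)^2}\,e^{BT}$, which is $\le N$ because $e^{F(T-t)} \le e^{FT}$; hence requiring (D3)--(D4) only for $|z| \le N$ is enough.

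With (A1) and (B2)--(B4) established, Proposition~\ref{proprt2} applies and yields a unique solution $(Y^{t,x}, Z^{t,x})$ of \eqref{bsdemrt} in $\mathbb{S}^\infty_t(\mathbb{R}) \times \mathbb{H}^\infty_t(\mathbb{R}^n)$ with $|Y^{t,x}_s| \le (C+1)e^{D(T-s)}-1$ on $[t,T]$ and, since $\int_s^T q_i e^{-B(T-u)}\,du = GEe^{F(T-t)}(1-e^{-B(T-s)})/B$,
$$
|Z^{t,x,i}_s| \le \Bigl(A_i + \tfrac{GEe^{F(T-t)}(1-e^{-B(T-s)})}{B}\Bigr)e^{B(T-s)} \quad ds \otimes d\mathbb{P}\text{-a.e.},
$$
which is the asserted estimate. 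The only step involving actual work is the transfer of the Malliavin-differentiability bounds (B3)--(B4) to $\bar{g}(\cdot, X^{t,x}_\cdot, \cdot, \cdot)$ via Lemma~\ref{lemmaX} and the chain rule; everything else is bookkeeping of constants, and nothing genuinely new arises beyond what was already dealt with in Propositions~\ref{propm1} and~\ref{proprt2}.
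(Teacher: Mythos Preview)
Your proposal is correct and follows the same approach as the paper. The paper's proof is a one-line remark that the result follows from Corollary~\ref{cormain} in the same way that Proposition~\ref{pdebdp} followed from Theorem~\ref{thmmain} and Remark~\ref{rmk1}; you have simply unpacked this, correctly invoking Proposition~\ref{proprt2} (the bounded-terminal, random-terminal-time analogue of Proposition~\ref{proprt}) and verifying its hypotheses via Lemma~\ref{lemmaX} and the Malliavin chain rule exactly as in the proof of Proposition~\ref{propm1}.
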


\begin{proof}
The result follows from Corollary \ref{cormain} like Proposition \ref{pdebdp} follows from
Theorem \ref{thmmain} and Remark \ref{rmk1}.
\end{proof}

Under appropriate assumptions, a solution to the BSDE \eqref{bsdemrt} yields a
solution to the following parabolic PDE with Dirichlet boundary conditions:
\be \label{pdeDiri}
\begin{aligned}
&u_t (t,x)+ \scL_{(t,x)}u(t,x)+g(t,x,u(t,x),(\nabla u\sigma)(t,x)) = 0 \quad \mbox{for } (t,x) \in [0,T) \times \scO\\
&u(t,x) = h(x) \quad \mbox{for } (t,x)\in[0,T] \times \partial\scO \mbox{ and } (t,x) \in \crl{T} \times \scO,
\end{aligned}
\ee
where
$$
\scL_{(t,x)} :=\frac{1}{2}\sum_{i,j}(\sigma \sigma^T)_{ij}(t)
\partial_{x_i}\partial_{x_j}+\sum_i b_i(t,x)\partial_{x_i}.
$$

The next result is a consequence of Theorem 2.2, Lemma 3.1 and Theorem 3.2 of Peng \cite{Peng91}.

\begin{theorem} {\bf (Peng, 1991)} \label{pth}
Assume the following conditions hold:
\begin{itemize}
\item[{\rm(F1)}]
$b:[0,T]\times\bbR^m\to\bbR^m$ is in $ C^{1,2}([0,T]\times\bar\scO)$,
$\sigma:[0,T]\to\bbR^{m\times n}$ is in $ C^{1}[0,T]$, and there exists a constant $\varepsilon > 0$ such that 
$\sum_{i,j}\brak{\sigma\sigma^T}_{ij}(t)v_iv_j\ge \varepsilon |v|^2$ for all $(t,v)\in[0,T]\times\bbR^m$
\item[{\rm(F2)}] $\scO$ is bounded and $\partial\scO$ is $ C^3$
\item[{\rm(F3)}] $h$ is $C^3$ and $\scL_{(t,x)}h(x)+g(T,x,h(x),\nabla h(x)\sigma(T))=0$ for $x\in\partial\scO$
\item[{\rm(F4)}] $g(t,x,y,z)$ is continuously differentiable in $(t,x,y,z) \in [0,T] \times \bar{\scO} \times 
\mathbb{R} \times \mathbb{R}^n$ with bounded derivatives.
\end{itemize} 
Then the BSDE \eqref{bsdemrt} has a unique solution $(Y^{t,x}, Z^{t,x})$ in
$\mathbb{S}^2(\mathbb{R}) \times \mathbb{H}^2(\mathbb{R}^n)$ and 
$u(t,x):=Y^{t,x}_t$ is the unique $C^{1,2}$-solution of the PDE \eqref{pdeDiri}.
\end{theorem}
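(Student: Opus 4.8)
The plan is to deduce the statement directly from the cited results of Peng \cite{Peng91}, so the real work is to check that (F1)--(F4) supply exactly the hypotheses used there. First I would unpack these conditions. By (F1) the diffusion is uniformly elliptic with $C^{1,2}$ drift and $C^1$ volatility, so for each $(t,x)\in[0,T]\times\bar\scO$ the forward SDE has a unique strong solution $X^{t,x}$ depending smoothly on $(t,x)$; by (F2) the domain $\scO$ is bounded with $C^3$ boundary, so the exit time $\tau^{t,x}=\inf\crl{s\ge t:X^{t,x}_s\notin\scO}\wedge T$ is well behaved and $X^{t,x}_{\tau^{t,x}}\in\partial\scO$ on $\crl{\tau^{t,x}<T}$; by (F4), $g$ is globally Lipschitz in $(y,z)$ with at most linear growth, uniformly in $(t,x)$; and by (F3), $h$ is $C^3$, hence bounded on $\bar\scO$. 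These are precisely the standing assumptions in Sections 2--3 of Peng \cite{Peng91}.

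Next I would invoke Theorem 2.2 of Peng \cite{Peng91} to obtain, for every $(t,x)$, a unique solution $(Y^{t,x},Z^{t,x})$ of the BSDE with random terminal time \eqref{bsdemrt} in $\bbS^2(\bbR)\times\bbH^2(\bbR^n)$. Setting $u(t,x):=Y^{t,x}_t$, Lemma 3.1 of Peng \cite{Peng91} gives the flow identity $Y^{t,x}_s=u(s,X^{t,x}_s)$ for $t\le s\le\tau^{t,x}$ together with continuity of $u$ on $[0,T]\times\bar\scO$; the prescribed boundary values $u=h$ on $([0,T]\times\partial\scO)\cup(\crl{T}\times\scO)$ then follow by evaluating this identity at $s=\tau^{t,x}$, where $X^{t,x}_{\tau^{t,x}}$ is either on $\partial\scO$ or equal to $x$ at time $T$. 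Finally, using the smoothness of $b,\sigma,g,h$ and the compatibility relation in (F3), I would apply Theorem 3.2 of Peng \cite{Peng91} to conclude that $u$ is of class $C^{1,2}$ on $[0,T]\times\bar\scO$, that it solves \eqref{pdeDiri}, and that $Z^{t,x}_s=(\nabla u\,\sigma)(s,X^{t,x}_s)$.

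For uniqueness among $C^{1,2}$-solutions I would argue in reverse: if $v$ is any $C^{1,2}$-solution of \eqref{pdeDiri}, then applying \Ito's formula to $v(s,X^{t,x}_s)$ from $s\wedge\tau^{t,x}$ to $\tau^{t,x}$ and using the PDE on $[0,T)\times\scO$ together with the boundary condition $v=h$ on $\partial\scO$ and at time $T$ shows that $(v(s,X^{t,x}_s),(\nabla v\,\sigma)(s,X^{t,x}_s))$ solves \eqref{bsdemrt}; since $\scO$ is bounded this pair lies in $\bbS^2(\bbR)\times\bbH^2(\bbR^n)$, so the uniqueness part of Theorem 2.2 of Peng forces $v(s,X^{t,x}_s)=Y^{t,x}_s$, and in particular $v(t,x)=u(t,x)$ for all $(t,x)$. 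The step I expect to be the main obstacle is verifying that (F2)--(F3) furnish exactly the domain regularity and the corner-compatibility identity $\scL_{(T,x)}h(x)+g(T,x,h(x),\nabla h(x)\sigma(T))=0$ on $\partial\scO$ that Theorem 3.2 of Peng \cite{Peng91} requires in order to upgrade the probabilistic candidate $u$ from a merely continuous function to a genuine $C^{1,2}$-solution up to the parabolic boundary (this is also what makes the \Ito argument in the uniqueness step legitimate near the corner $\crl{T}\times\partial\scO$); everything else is a direct transcription of the cited statements.
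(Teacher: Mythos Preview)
Your proposal is correct and matches the paper's approach exactly: the paper gives no proof of its own but simply states that the result ``is a consequence of Theorem 2.2, Lemma 3.1 and Theorem 3.2 of Peng \cite{Peng91}.'' Your write-up is a faithful elaboration of that citation, including the \Ito-formula verification argument for uniqueness, which the paper also uses (in the proof of the subsequent Corollary \ref{pdebdt}).
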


By applying Proposition \ref{pdebdp2}, one can weaken condition (F4) in Theorem \ref{pth}.

\begin{corollary} \label{pdebdt}
Assume {\rm (F1)--(F3)} are satisfied, $g$ is continuously differentiable
in $(t,x,y,z) \in [0,T] \times \bar{\scO} \times \mathbb{R} \times \mathbb{R}^n$ 
and the assumptions of Proposition \ref{pdebdp2} hold. Let $(Y^{t,x},Z^{t,x})$ be the 
unique solution of the BSDE \eqref{bsdemrt} in 
$\mathbb{S}^{\infty}_t(\mathbb{R}) \times \mathbb{H}^{\infty}_t(\mathbb{R}^n)$.
Then $u(t,x):=Y^{t,x}_t$ is the unique $C^{1,2}$-solution of the PDE \eqref{pdeDiri}, and one has
\be \label{boundsunablau}
|u(t,x)| \le (C+1) e^{D(T-t)} -1, \quad
|\nabla u(t,x)| \le \frac{1}{\sqrt{\varepsilon}} \sqrt{\sum_i\brak{A_i+\frac{GEe^{F(T-t)}(1-e^{-B(T-t)})}{B}}^2}e^{B(T-t)}.
\ee
\end{corollary}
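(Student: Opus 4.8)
The plan is to combine Proposition~\ref{pdebdp2} with Peng's Theorem~\ref{pth} through a truncation of the driver, in the same spirit as the proofs of Corollaries~\ref{corpde} and~\ref{corclassical}. By the assumptions of Proposition~\ref{pdebdp2}, for every $(t,x)\in[0,T]\times\bar\scO$ the BSDE \eqref{bsdemrt} has a unique solution $(Y^{t,x},Z^{t,x})$ in $\bbS^\infty_t(\bbR)\times\bbH^\infty_t(\bbR^n)$, and this solution satisfies $|Y^{t,x}_s|\le (C+1)e^{D(T-s)}-1\le R_0:=(C+1)e^{DT}-1$ as well as $|Z^{t,x,i}_s|\le\brak{A_i+\frac{GEe^{F(T-t)}(1-e^{-B(T-s)})}{B}}e^{B(T-s)}$ componentwise, so in particular $|Z^{t,x}_s|\le N$. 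Since $h\in C^3$ by (F3) and $\scO$ is bounded, $\nabla h$ is bounded on $\bar\scO$; together with \eqref{sigmaL} this gives a constant $M\in\bbR_+$ with $|\nabla h(x)\sigma(T)|\le M$ for all $x\in\bar\scO$, and clearly $|h(x)|\le C\le R_0$.

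First I would replace $g$ by a truncated driver. Choose $\chi\in C^\infty(\bbR\times\bbR^n;[0,1])$ with $\chi\equiv 1$ on $\crl{(y,z):|y|\le R_0,\ |z|\le N\vee M}$ and $\chi\equiv 0$ outside a larger compact set, and set $\tilde g:=\chi g$ on $[0,T]\times\bar\scO\times\bbR\times\bbR^n$. Because $g$ is continuously differentiable and the support of $\chi$ is compact, $\tilde g$ is continuously differentiable with bounded derivatives on $[0,T]\times\bar\scO\times\bbR\times\bbR^n$, so (F4) holds with $\tilde g$ in place of $g$. Moreover $\tilde g=g$ on $[0,T]\times\bar\scO\times\crl{|y|\le R_0,\ |z|\le N\vee M}$; in particular $\tilde g(T,x,h(x),\nabla h(x)\sigma(T))=g(T,x,h(x),\nabla h(x)\sigma(T))$ for $x\in\partial\scO$, so (F3) also holds with $\tilde g$ in place of $g$, and (F1)--(F2) do not involve the driver. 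Hence Theorem~\ref{pth} applies to $\tilde g$: the BSDE \eqref{bsdemrt} with driver $\tilde g$ has a unique solution $(\tilde Y^{t,x},\tilde Z^{t,x})$ in $\bbS^2_t(\bbR)\times\bbH^2_t(\bbR^n)$, and $\tilde u(t,x):=\tilde Y^{t,x}_t$ is the unique $C^{1,2}$-solution of the PDE \eqref{pdeDiri} with $g$ replaced by $\tilde g$.

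Next I would identify the two solutions and read off the conclusions. The solution $(Y^{t,x},Z^{t,x})$ takes values in $\crl{|y|\le R_0,\ |z|\le N}$, where $\tilde g=g$, so it also solves \eqref{bsdemrt} with driver $\tilde g$; since $\bbS^\infty_t\times\bbH^\infty_t\subset\bbS^2_t\times\bbH^2_t$, uniqueness forces $(Y^{t,x},Z^{t,x})=(\tilde Y^{t,x},\tilde Z^{t,x})$, and hence $u=\tilde u$ is of class $C^{1,2}$. Applying It\^{o}'s formula to $u(s\wedge\tau^{t,x},X^{t,x}_{s\wedge\tau^{t,x}})$, and using that $u$ solves the PDE with driver $\tilde g$ on $[0,T)\times\scO$ and that $u=h$ on the parabolic boundary, shows that $\brak{u(s\wedge\tau^{t,x},X^{t,x}_{s\wedge\tau^{t,x}}),\ (\nabla u\,\sigma)(s,X^{t,x}_s)1_{\crl{s\le\tau^{t,x}}}}$ solves \eqref{bsdemrt} with driver $\tilde g$, hence equals $(Y^{t,x},Z^{t,x})$; in particular $(\nabla u\,\sigma)(s,X^{t,x}_s)=Z^{t,x}_s$, so letting $s\downarrow t$ in the $\bbH^\infty$-bound above and using continuity of $\nabla u$, $X^{t,x}$ and $\sigma$ gives $|(\nabla u\,\sigma)(t,x)|\le\sqrt{\sum_i\brak{A_i+\frac{GEe^{F(T-t)}(1-e^{-B(T-t)})}{B}}^2}\,e^{B(T-t)}$. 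Combining this with the ellipticity estimate $|(\nabla u\,\sigma)(t,x)|^2=\sum_{i,j}(\sigma\sigma^T)_{ij}(t)\,\partial_{x_i}u(t,x)\,\partial_{x_j}u(t,x)\ge\varepsilon\,|\nabla u(t,x)|^2$ from (F1) yields the bound on $|\nabla u|$ in \eqref{boundsunablau}, while $|u(t,x)|=|Y^{t,x}_t|\le(C+1)e^{D(T-t)}-1$ is immediate. Since $(u,\nabla u\,\sigma)(t,x)$ lies in $\crl{|y|\le R_0,\ |z|\le N}$ for every $(t,x)\in[0,T)\times\scO$, where $\tilde g=g$, the function $u$ actually solves the original PDE \eqref{pdeDiri}. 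For uniqueness, let $v$ be any $C^{1,2}$-solution of \eqref{pdeDiri}; as $\bar\scO$ is compact, $v$ and $\nabla v$ are bounded, so It\^{o}'s formula shows $\brak{v(s\wedge\tau^{t,x},X^{t,x}_{s\wedge\tau^{t,x}}),\ (\nabla v\,\sigma)(s,X^{t,x}_s)1_{\crl{s\le\tau^{t,x}}}}$ is a solution of \eqref{bsdemrt} in $\bbS^\infty_t(\bbR)\times\bbH^\infty_t(\bbR^n)$, which by Proposition~\ref{pdebdp2} must coincide with $(Y^{t,x},Z^{t,x})$; evaluating at $s=t$ gives $v(t,x)=Y^{t,x}_t=u(t,x)$.

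I expect the only delicate point to be the choice of the truncation region: it must be large enough to contain simultaneously the range of $(Y^{t,x},Z^{t,x})$ (so that the truncated and untruncated BSDEs share the same solution), the boundary data $(h(x),\nabla h(x)\sigma(T))$ over $x\in\partial\scO$ (so that the compatibility condition (F3) survives the truncation and Peng's theorem can be invoked), and the range of $(u,\nabla u\,\sigma)$ on $[0,T)\times\scO$ (so that the $C^{1,2}$-function produced by Peng's theorem solves the original equation). The a priori bounds of Proposition~\ref{pdebdp2}, together with the boundedness of $\nabla h$ on the compact set $\bar\scO$, are precisely what make such a choice possible; the remaining steps are the routine combination of It\^{o}'s formula with the uniqueness statements already available.
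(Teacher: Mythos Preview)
Your argument is correct and follows essentially the same line as the paper's proof: truncate $g$ outside the range of $(Y^{t,x},Z^{t,x})$ so that (F4) holds, apply Peng's theorem to the truncated problem, identify the resulting $C^{1,2}$-solution with $u(t,x)=Y^{t,x}_t$, read off the bounds from Proposition~\ref{pdebdp2} and the ellipticity in (F1), and prove uniqueness via It\^{o}'s formula and the BSDE uniqueness. The paper is terser---it simply says ``by modifying $g$ for pairs $(y,z)$ that are not attained by $(Y^{t,x},Z^{t,x})$, one can assume that it has bounded derivatives'' and invokes the identity $Z^{t,x}_t=\nabla u(t,x)\sigma(t)$ directly from Peng's proof---whereas you are more explicit about two points: you enlarge the truncation region to include the boundary values $(h(x),\nabla h(x)\sigma(T))$ so that the compatibility condition (F3) survives, and you recover $(\nabla u\,\sigma)(t,x)=Z^{t,x}_t$ by the It\^{o} representation plus a continuity limit rather than by citing Peng. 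Both differences are matters of presentation; the underlying strategy is the same.
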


\begin{proof}
It follows from Proposition \ref{pdebdp2} that the BSDE \eqref{bsdemrt} has a unique solution $(Y^{t,x},Z^{t,x})$ 
in $\mathbb{S}^{\infty}_t(\mathbb{R}) \times \mathbb{H}^{\infty}(\mathbb{R}^n)$ with
$|Y^{t,x}_s| \le (C+1)e^{D(T-s)}-1$ and
$$
|Z^{t,x}_s| \le 
\sqrt{\sum_i \brak{A_i + \frac{G Ee^{F(T-t)}(1-e^{-B(T-t)})}{B}}^2}e^{B(T-t)} \quad ds\otimes d\bbP\text{-a.e.}
$$
By modifying $g$ for pairs $(y,z)$ that are not attained by $(Y^{t,x}, Z^{t,x})$, one can assume that 
it has bounded derivatives. Then one obtains from Theorem \ref{pth} that 
$u(t,x):=Y^{t,x}_t $ is a $C^{1,2}$-solution of the PDE \eqref{pdeDiri}. 
It can be seen in the proof of Theorem 3.2 of Peng \cite{Peng91} that 
$Z^{t,x}_t = \nabla u(t,x) \sigma(t)$. So the bounds \eqref{boundsunablau} follow from condition (F1).

If $v$ is another $C^{1,2}$-solution of \eqref{pdeDiri}, $v$ and $\nabla v$ are bounded. Moreover, 
it follows from It\^{o}'s formula that $\tilde{Y}^{t,x}_s := v(s \wedge \tau^{t,x},X^{t,x}_{s\wedge\tau^{t,x}})$,
$\tilde{Z}^{t,x}_s := \nabla v(s,X^{t,x}_{s \wedge\tau^{t,x}})\sigma(s)1_{\crl{s \le \tau}}$ solve  
the BSDE \eqref{bsdemrt}. So one obtains from the
uniqueness result of Proposition \ref{pdebdp2} that $u(t,x) = v(t,x)$.
\end{proof}

An important assumption of Propositions \ref{pdebdp} and \ref{pdebdp2} as well as 
Corollary \ref{pdebdt} is that $g$ has an extension $\bar{g}$ such that condition ${\rm(E)}$ holds. 
$X^{t,x}_{\tau^{t,x}}$ is typically not Malliavin differentiable. For instance, if 
$\tau$ is a stopping time such that $W_{\tau} \in \bbD^{1,2}$, then $\tau$ must be a constant.
Indeed, for $W_{\tau} = \int_0^{\infty} 1_{\crl{s < \tau}} dW_s \in \bbD^{1,2}$, one obtains from 
Proposition 5.3 of El Karoui et al. (1997) that $1_{\crl{s < \tau}} \in \bbD^{1,2}$ for almost 
all $s$, and therefore, by Proposition 1.2.6 of Nualart \cite{Nualart}, $\bbP[s < \tau] = 0$ or $1$. 
So to ensure that condition (E) holds, one has to require the functions 
$g$ and $h$ to be regular enough. The following lemma gives sufficient conditions
for (E).

\begin{lemma} \label{lemmacondE}
Assume $g$ has an extension
$\bar{g} :[0,T]\times\bbR^m\times\bbR\times\bbR^n\to\bbR$ satisfying $\bar{g}(t,x,y,0) = 0$
for all $t$, $x$ and $y$. Then {\rm (E)} holds if $h : \bar{\scO} \to \mathbb{R}$ is $C^2$ 
with bounded gradient and there exist measurable functions 
$\alpha: [0,T] \times h(\bar{\scO}) \to \mathbb{R}$ and
$\beta: h(\bar{\scO}) \to \mathbb{R}^n$ such that 
\begin{itemize}
\item[{\rm (i)}]
$\scL_{(t,x)} h(x) = \alpha(t,h(x))$, $(\nabla h \sigma)(x) = \beta(h(x))$ for $(t,x) \in [0,T] \times \scO$
\item[{\rm (ii)}]
$\alpha(t,h(x)) = 0$ and $\beta(h(x)) = 0$ for $x \in \partial \scO$,
\item[{\rm (iii)}]
$|\alpha(t,x) - \alpha(t,y)| \le \kappa |x-y|$, $\alpha(t,0) \le \kappa$ and $|\beta(x) - \beta(y)| \le \kappa |x-y|$
for some constant $\kappa \in \mathbb{R}_+$
\item[{\rm (iv)}]
there exist countably many values $h_1, h_2, \dots$ in $\mathbb{R}$ such that 
$\partial \scO = \bigcup_i \crl{x \in \partial \scO : h(x) = h_i}$.
\end{itemize}
\end{lemma}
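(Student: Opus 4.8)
The plan is to use the hypothesis $\bar g(t,x,y,0)=0$, which trivialises the ODE defining $\hat\xi^{t,x}$, so that $\hat\xi^{t,x}=h(X^{t,x}_{\tau^{t,x}})$ for every $(t,x)$, and then to recognise the stopped functional $h(X^{t,x}_{s\wedge\tau^{t,x}})$ as the terminal value of an ordinary SDE that no longer refers to the exit time. Set $N^{t,x}_s:=h(X^{t,x}_{s\wedge\tau^{t,x}})$ for $s\in[t,T]$, so $\hat\xi^{t,x}=N^{t,x}_T$, and fix Lipschitz extensions $\tilde\alpha:[0,T]\times\bbR\to\bbR$ of $\alpha$ and $\tilde\beta:\bbR\to\bbR^n$ of $\beta$ with Lipschitz constant $\kappa$ (possible by (iii)); by (iii) these also have at most linear growth.

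First I would show that $N^{t,x}$ solves $dN_s=\tilde\alpha(s,N_s)\,ds+\tilde\beta(N_s)\,dW_s$ on $[t,T]$ with $N_t=h(x)$. A localisation of It\^o's formula (using that $X^{t,x}_s\in\scO$ for $s<\tau^{t,x}$) together with (i) gives
$$
N^{t,x}_s=h(x)+\int_t^{s\wedge\tau^{t,x}}\tilde\alpha(u,N^{t,x}_u)\,du+\int_t^{s\wedge\tau^{t,x}}\tilde\beta(N^{t,x}_u)\,dW_u .
$$
On $\{\tau^{t,x}<T\}$, continuity of paths and openness of $\scO$ force $X^{t,x}_{\tau^{t,x}}\in\partial\scO$, so that for $s>\tau^{t,x}$ the process $N^{t,x}$ is frozen at $h(X^{t,x}_{\tau^{t,x}})$ and, by (ii), both $\tilde\alpha(u,N^{t,x}_u)$ and $\tilde\beta(N^{t,x}_u)$ vanish for $u\in(\tau^{t,x},T]$. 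Hence the two integrals above may be taken over $[t,s]$, which is the asserted SDE. By pathwise uniqueness $N^{t,x}$ is its unique strong solution, so Theorem 2.2.1 of Nualart \cite{Nualart} yields $\hat\xi^{t,x}=N^{t,x}_T\in\bbD^{1,2}_t$.

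Next I would bound the Malliavin derivative, splitting according to $\{\tau^{t,x}<T\}$ versus $\{\tau^{t,x}=T\}$. On $\{\tau^{t,x}<T\}$ one has $X^{t,x}_{\tau^{t,x}}\in\partial\scO$, hence by (iv) $\hat\xi^{t,x}\in\{h_1,h_2,\dots\}$, a countable (Lebesgue-null) set; since the Malliavin derivative of a $\bbD^{1,2}$-variable vanishes $dr\otimes d\bbP$-a.e.\ on each of its level sets (a standard consequence of the chain rule for Lipschitz functions, cf.\ Nualart \cite{Nualart}), $D^i_r\hat\xi^{t,x}=0$ $dr\otimes d\bbP$-a.e.\ on $\{\tau^{t,x}<T\}$ for every $i$. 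On $\{\tau^{t,x}=T\}$ one has $X^{t,x}_T\in\bar\scO$ and $\hat\xi^{t,x}=h(X^{t,x}_T)$; fixing a Lipschitz extension $\bar h:\bbR^m\to\bbR$ of $h$ with the same (bounded) Lipschitz constant and setting $R:=\hat\xi^{t,x}-\bar h(X^{t,x}_T)\in\bbD^{1,2}_t$, one has $\{\tau^{t,x}=T\}\subseteq\{R=0\}$, so $DR=0$ a.e.\ on $\{\tau^{t,x}=T\}$. Hence, a.e.\ on $\{\tau^{t,x}=T\}$, $D^i_r\hat\xi^{t,x}=D^i_r\bar h(X^{t,x}_T)$, and by Lemma \ref{lemmaX} together with Proposition 1.2.4 of Nualart \cite{Nualart} the right-hand side is bounded in absolute value by a constant $A_i$ depending only on $\|\,|\nabla h|\,\|_\infty$, $E$, $F$ and $T$, hence independent of $(t,x)$. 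Combining the two cases gives $|D^i_r\hat\xi^{t,x}|\le A_i$ $dr\otimes d\bbP$-a.e., which is (E).

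The step I expect to be the main obstacle is the reformulation in the first display: $\hat\xi^{t,x}=h(X^{t,x}_{\tau^{t,x}})$ is the value of a diffusion stopped at an exit time, and such random variables are in general \emph{not} Malliavin differentiable (see the remark preceding the lemma), so the whole argument hinges on rewriting it as the terminal value of an autonomous SDE. Conditions (i) and (ii) are exactly tailored to this — (i) lets It\^o's formula close the equation on the set $\{(s,\omega):t\le s\le\tau^{t,x}(\omega)\}$, and (ii), together with the identification $X^{t,x}_{\tau^{t,x}}\in\partial\scO$, forces the coefficients to die after $\tau^{t,x}$ so that the stopping time drops out of the equation — and verifying this vanishing is the one point that is not completely routine. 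Once the SDE representation is available, Malliavin differentiability is classical and the uniform bound is purely a matter of the level-set argument, for which condition (iv) is indispensable.
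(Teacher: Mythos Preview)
Your proof is correct and follows essentially the same route as the paper's: reduce $\hat\xi^{t,x}$ to $h(X^{t,x}_{\tau^{t,x}})$ via $\bar g(\cdot,\cdot,\cdot,0)=0$, use It\^o's formula together with conditions (i)--(iii) to exhibit $h(X^{t,x}_{s\wedge\tau^{t,x}})$ as the solution of a Lipschitz SDE (so that Theorem~2.2.1 of Nualart gives membership in $\bbD^{1,2}$), and then split on $\{\tau^{t,x}<T\}$ versus $\{\tau^{t,x}=T\}$ using locality of the Malliavin derivative, condition (iv), and Lemma~\ref{lemmaX}. The only cosmetic differences are that the paper invokes Proposition~1.3.16 of Nualart directly to write $D\hat\xi^{t,x}=1_{\{\tau^{t,x}=T\}}Dh(X^{t,x}_T)$ in one line, whereas you reach the same conclusion via the auxiliary variable $R=\hat\xi^{t,x}-\bar h(X^{t,x}_T)$ and the level-set vanishing property; and you are more explicit about the Lipschitz extensions of $\alpha$, $\beta$, and $h$, which the paper leaves implicit.
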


\begin{proof}
Fix $(t,x) \in [0,T] \times \scO$. Since $g(s,x,y,0) = 0$, one has $\hat{\xi}^{t,x} = h(X^{t,x}_{\tau^{t,x}})$,
and by It\^{o}'s Lemma, 
\beas
h(X^{t,x}_{s \wedge \tau^{t,x}}) &=& h(x) + \int_t^{s \wedge \tau^{t,x}} \scL_{(t,x)} h(X^{t,x}_r) dr 
+ \int_t^{s \wedge \tau^{t,x}} (\nabla h \sigma)(X^{t,x}_r) dW_r\\
&=& h(x) + \int_t^s \alpha(r,h(X^{t,x}_{r \wedge \tau^{t,x}})) dr 
+ \int_t^s \beta(h(X^{t,x}_{r \wedge \tau^{t,x}})) dW_r.
\eeas
It follows from Theorem 2.2.1 of Nualart \cite{Nualart} that $h(X^{t,x}_{\tau^{t,x}})$ belongs to $\bbD^{1,2}$. 
Moreover, since the Malliavin derivative is local (see Proposition 1.3.16 in \cite{Nualart}), one has
$$
D \hat{\xi}^{t,x} = D h(X^{t,x}_{\tau^{t,x}}) =1_{\crl{\tau^{t,x} = T}} D h(X^{t,x}_T) 
+ \sum_i 1_{\crl{\tau^{t,x} < T, \, h(X^{t,x}_{\tau^{t,x}}) = h_i}} Dh_i = 1_{\crl{\tau^{t,x} = T}} D h(X^{t,x}_T),
$$
and by the chain rule, $D h(X^{t,x}_T) = \sum_j \partial_j h(X^{t,x}_T) D X^{t,x,j}_T$. So since 
$\nabla h$ is bounded, it follows from Lemma \ref{lemmaX} that $D \hat{\xi}^{t,x}$ is uniformly bounded
in $t$ and $x$.
\end{proof}

The following example describes a more concrete situation in which condition (E) holds.

\begin{Example}
Assume $X = W$ for a one-dimensional Brownian motion $W$ and $\bar{\scO} = [a,b]$.
If $g$ has an extension $\bar{g} : [0,T] \times \mathbb{R}^3 \to \mathbb{R}$ such that 
$g(t,x,y,0) = 0$ for all $t$, $x$, $y$, and $h : [a,b] \to \mathbb{R}$ is $C^2$ such that
$h'(x) = \beta(h(x))$ for a $C^2$-function $\beta :h[a,b] \to \mathbb{R}$ satisfying 
$\beta(a) = \beta(b) = 0$, then $\scL_{(t,x)} h(x) = h''(x)/2 = G'(h(x))G(h(x))/2$. So the 
conditions of Lemma \ref{lemmacondE} are fulfilled, and (E) holds.
\end{Example}

\setcounter{equation}{0}
\section{Markovian BSDEs based on reflected SDEs and parabolic PDEs with lateral
Neumann boundary conditions}
\label{sec:pdeNeu}

In this whole section, $\scO \subset\bbR^n$ is an open connected domain and 
$b: \bar{\scO} \to\bbR^n$, $\sigma: \bar{\scO} \to\bbR^{n\times n}$ are bounded Lipschitz functions. 
We assume that $\scO$ satisfies the 
uniform exterior sphere condition and uniform interior cone condition 
introduced by Saisho \cite{Saisho}. They are defined as follows:
For $y \in \partial \scO$ and $r>0$, define $\scN_{y,r}:=\crl{ v\in\bbR^n:|v|=1,
 B_r(y-rv) \cap \scO = \emptyset}$ and
$\scN_y:=\cup_{r>0} \scN_{y,r}$ where $B_r(y)$ denotes the open ball around $y$ with radius $r$.
\begin{itemize}
\item[] {\bf Uniform exterior sphere condition}\\ There exists a constant $r_0>0$ such that
$\scN_{y}=\scN_{y,r_0} \neq \emptyset$ for all $y\in\partial\scO$.
\item[] {\bf Uniform interior cone condition}\\
There exist constants $\delta>0$ and $\varepsilon\in [0,1)$ with the following property:
for every $y\in\partial\scO$, there exists a unit vector $v \in \mathbb{R}^n$ such that
$$
\crl{z\in B_{\delta}(y) :\ang{z-x,v}\ge \varepsilon|z-x|} \subset\bar\scO
\quad \mbox{for all $x\in B_\delta(y)\cap\partial \scO$.}
$$
\end{itemize}

\subsection{Reflected SDEs and Markovian BSDEs}

For every pair $(t,x) \in [0,T] \times \bar{\scO}$ we define a diffusion $X^{t,x}$ that is reflected at the 
boundary of $\scO$. Let $v(y)\in\scN_y$ be a vector field on $\partial\scO$. 
Note that if $\partial\scO$ is smooth, then $v(y)$ is the unit inward normal vector at $y$.
It is shown in Saisho \cite{Saisho} that for all $(t,x)$, there exists a unique pair 
$(X^{t,x}, L^{t,x})$ of continuous adapted processes with values in 
$\bar{\scO} \times \bbR_+$ such that for all $s \in [t,T]$,
\be \label{rfsde}
\begin{aligned}
& X^{t,x}_{s}  = x+\int_{t}^{s}b(X^{t,x}_{r})dr+\int_{t}^{s}\sigma(X^{t,x}_r) dW_{r}+\int_t^{s}v(X_r^{t,x})dL^{t,x}_r\\
& L^{t,x}_s = \int_t^{s}1_{\{X_r^{t,x}\in \partial \scO\}} dL_r^{t,x}
\quad \mbox{and} \quad L^{t,x} \text{ is nondecreasing.}
\end{aligned}
\ee
Let $g : [0,T] \times \bar\scO \times \bbR \times \bbR^n \to \bbR$ and 
$h: \bar\scO \to\bbR$ be measurable functions and consider the BSDE
\be \label{nbsde}
Y^{t,x}_{s} =h(X^{t,x}_{T})+\int_{s}^{T}g(r,X^{t,x}_{r},Y^{t,x}_{r},Z^{t,x}_{r})dr-\int_{s}^{T}Z^{t,x}_{r}dW_{r},
\quad t \le s \le T.
\ee

\begin{proposition} \label{propnbsde}
Assume there exists a constant $M \in \mathbb{R}_+$ such that for all $0 \le t \le s \le T$ and $x \in \bar{\scO}$,
\be \label{condrefl}
X_s^{t,x}\in\bbD^{1,2} \quad \mbox{and} \quad |D_r X^{t,x}_s| \le M \quad dr\otimes d\p\mbox{-a.e.}
\ee
If $g$ and $h$ satisfy {\rm (C1)--(C4)} with 
$$N =\sqrt{n}\brak{A+\frac{1-e^{BT}}{B}G}Me^{BT} \quad \mbox{instead of} \quad
N = \sqrt{n}\brak{A+\frac{1-e^{BT}}{B}G}Ee^{(B+F)T},$$ then \eqref{nbsde} has for all 
$(t,x)\in[0,T]\times \bar{\scO}$ a unique solution $(Y^{t,x},Z^{t,x})\in\bbS^2_t(\bbR)
\times\bbH^\infty_t(\bbR^n)$, and
$$
|Z^{t,x,i}_s| \le \brak{A + \frac{1-e^{-B(T-s)}}{B}G} M e^{B(T-s)} \quad \mbox{$ds \otimes d\p$-a.e.}
\quad \mbox{for all } i = 1, \dots, n.
$$ 
If $g$ and $h$ satisfy {\rm (D1)--(D4)}  with 
$$N =\sqrt{n}\brak{A+\frac{1-e^{BT}}{B}G}Me^{BT} \quad \mbox{instead of} \quad
N = \sqrt{n}\brak{A+\frac{1-e^{BT}}{B}G}Ee^{(B+F)T},$$
then \eqref{nbsde} has a unique solution 
$(Y^{t,x},Z^{t,x}) \in \bbS^\infty_t(\bbR)\times\bbH^\infty_t(\bbR^n)$, and
\beas
|Y^{t,x}_s|&\le& (C+1) e^{D(T-s)}-1\quad \text{for all }s\in[t,T]\;\; a.s.\\
|Z^{t,x,i}_s|&\le& \brak{A + \frac{1-e^{-B(T-s)}}{B}G} M e^{B(T-s)}
\quad ds \otimes d\mathbb{P}\mbox{-a.e.} \quad \mbox{for all } i = 1, \dots, n.
\eeas
\end{proposition}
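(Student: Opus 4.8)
The plan is to reduce everything to Theorem \ref{thmmain} and Corollary \ref{cormain}, exactly along the lines by which Proposition \ref{propm1} was deduced from Theorem \ref{thmmain} and Proposition \ref{propm2} from Corollary \ref{cormain}. Fix $(t,x) \in [0,T] \times \bar{\scO}$ and regard \eqref{nbsde} as a BSDE on $[t,T]$ driven by $W^t$ with terminal condition $\xi := h(X^{t,x}_T)$ and driver $f(s,y,z) := g(s,X^{t,x}_s,y,z)$, all Malliavin-calculus statements being read with respect to $W^t$ and the spaces $\mathbb{D}^{1,2}_t$, $\mathbb{L}^{1,2}_{a,t}$. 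The only structural difference from Proposition \ref{propm1} is that the role played there by Lemma \ref{lemmaX}, which supplied the bound $|D^i_r X^{t,x,j}_s| \le E e^{F(T-t)}$, is now played by the standing assumption \eqref{condrefl}, which supplies $|D_r X^{t,x}_s| \le M$.

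First I would check hypotheses (A1)--(A4) (with $f(\cdot,0,0) \in \mathbb{H}^4(\mathbb{R})$ dropped from (A3), which Remark \ref{rmk1} allows). For (A1): by \eqref{condrefl}, $X^{t,x}_T \in (\mathbb{D}^{1,2}_t)^n$ with $|D_r X^{t,x}_T| \le M$, so combining this with the Lipschitz condition (C1) and Proposition 1.2.4 of Nualart \cite{Nualart} yields, for each $i$, a random vector $\Lambda \in \mathbb{R}^n$ with $\sum_j |\Lambda^j| \le A$ and $D^i_r h(X^{t,x}_T) = \sum_j \Lambda^j D^i_r X^{t,x,j}_T$, hence $|D^i_r h(X^{t,x}_T)| \le AM$; thus (A1) holds with $A_i := AM$. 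Condition (A2) is immediate from (C2). For (A3) the same application of Proposition 1.2.4 to $g(\cdot,X^{t,x}_\cdot,y,z)$, using (C3) and \eqref{condrefl}, gives $g(\cdot,X^{t,x}_\cdot,y,z) \in \mathbb{L}^{1,2}_{a,t}$ with $|D^i_r g(s,X^{t,x}_s,y,z)| \le GM$ whenever $|z| \le N$, so (A3) holds with $q_i \equiv GM$; one then checks that with $A_i = AM$ and $q_i \equiv GM$ the threshold $Q$ of Theorem \ref{thmmain} is precisely the constant $N$ appearing in the statement. For (A4), applying the same reasoning to $(s,x,y,y',z,z') \mapsto g(s,x,y,z) - g(s,x,y',z')$ together with (C4) gives $|D^i_r g(s,X^{t,x}_s,y,z) - D^i_r g(s,X^{t,x}_s,y',z')| \le HM(|y-y'| + |z-z'|)$ for $|z|,|z'| \le N$, that is, (A4) with $K = HM$. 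Theorem \ref{thmmain} and Remark \ref{rmk1} then yield the unique solution $(Y^{t,x},Z^{t,x}) \in \mathbb{S}^2_t(\mathbb{R}) \times \mathbb{H}^\infty_t(\mathbb{R}^n)$, and since $A_i + \int_s^T q_i(r) e^{-B(T-r)} dr = M(A + \frac{1-e^{-B(T-s)}}{B}G)$, the bound on $Z^{t,x}$ takes the asserted form.

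For the second part I would note that, exactly as (D1)--(D4) imply (B1)--(B4) in the proof of Proposition \ref{propm2}, the assumptions here imply that the data $(\xi,f)$ satisfy (B1)--(B4) with $R = (C+1)e^{DT}-1$, $A_i = AM$, $q_i \equiv GM$, and threshold $N$. Corollary \ref{cormain} then gives the unique solution in $\mathbb{S}^\infty_t(\mathbb{R}) \times \mathbb{H}^\infty_t(\mathbb{R}^n)$ together with the stated bounds $|Y^{t,x}_s| \le (C+1)e^{D(T-s)}-1$ and $|Z^{t,x,i}_s| \le (A + \frac{1-e^{-B(T-s)}}{B}G) M e^{B(T-s)}$.

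I do not expect a serious obstacle, since \eqref{condrefl} is assumed rather than derived: the work is entirely in the verification of (A1)--(A4) / (B1)--(B4), which is routine once Lemma \ref{lemmaX} is replaced by \eqref{condrefl}. The only points requiring care are the bookkeeping that identifies the threshold $Q$ of Theorem \ref{thmmain} with the $N$ in (C3)/(D3), and the observation that, just as in Proposition \ref{propm1}, all the Malliavin-calculus arguments are to be carried out with respect to the shifted Brownian motion $W^t$ on $[t,T]$.
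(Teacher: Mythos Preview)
Your proposal is correct and matches the paper's approach exactly: the paper's own proof is a single sentence stating that the result follows like Proposition \ref{propm1} under (C1)--(C4) and like Proposition \ref{propm2} under (D1)--(D4). You have simply spelled out the verification of (A1)--(A4) and (B1)--(B4) in detail, correctly replacing the role of Lemma \ref{lemmaX} by the hypothesis \eqref{condrefl} and identifying the constants $A_i = AM$, $q_i \equiv GM$.
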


\begin{proof}
If $g$ and $h$ satisfy {\rm (C1)--(C4)}, the proposition follows like Proposition \ref{propm1},
and if $g$ and $h$ fulfill {\rm (D1)--(D4)}, it follows like Proposition \ref{propm2}.
\end{proof}

\eqref{condrefl} is a crucial assumption of Proposition \ref{propnbsde}.
The following lemma gives a sufficient condition for it.



\begin{lemma} \label{lemmarefl}
Assume that $\scO$ is a convex polyhedron with nonempty interior in $\bbR^n$,
$b=0$, and $\sigma= c \,Id$ for a constant $c \in\bbR_+$. Then condition \eqref{condrefl} holds.
\end{lemma}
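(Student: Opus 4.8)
The plan is to approximate the reflected diffusion $X^{t,x}$ by penalized, non-reflected diffusions, whose Malliavin differentiability is classical, and to exploit the convexity of $\scO$ to bound the Malliavin derivatives of the approximations uniformly along the approximation; closedness of the Malliavin derivative operator will then transfer the bound to $X^{t,x}$ itself.

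First, since $b=0$ and $\sigma=c\,Id$, the process $X^{t,x}$ is just the solution of the Skorokhod problem in $\bar\scO$ associated with the path $s\mapsto x+c(W_s-W_t)$. I would set $\phi:=\half\,d(\cdot,\bar\scO)^2$, where $d(\cdot,\bar\scO)$ is Euclidean distance to $\bar\scO$. Since $\bar\scO$ is convex, $\phi$ is convex and of class $C^{1,1}$ with $\nabla\phi$ being $1$-Lipschitz, because $\nabla\phi(x)=x-\pi_{\bar\scO}(x)$ and the projection $\pi_{\bar\scO}$ onto a convex set is firmly nonexpansive. Mollifying, $\phi_k:=\phi\ast\eta_{1/k}$ is smooth and convex, $\nabla\phi_k$ is $1$-Lipschitz, and $D^2\phi_k\succeq 0$, uniformly in $k$. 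I would then consider, for each $k$, the SDE
\[
X^k_s = x - k\int_t^s \nabla\phi_k(X^k_r)\,dr + c\,(W_s-W_t), \qquad s\in[t,T],
\]
which has a unique strong solution, and recall the classical fact that $X^k\to X^{t,x}$ in $\bbS^2_t(\bbR^n)$ as $k\to\infty$ (penalization of reflected SDEs in convex domains).

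Next I would fix $0\le t\le s\le T$ and $x\in\bar\scO$ and differentiate. Since the drift of $X^k$ is smooth with bounded derivatives and its diffusion coefficient is the constant $c\,Id$, Theorem 2.2.1 of Nualart \cite{Nualart} gives $X^k_s\in(\bbD^{1,2}_t)^n$, and for $t\le r\le s$ the matrix-valued process $\Psi^r_\cdot:=D_r X^k_\cdot$ solves the linear equation $\Psi^r_s = c\,Id - k\int_r^s D^2\phi_k(X^k_u)\,\Psi^r_u\,du$ (and $D_r X^k_s=0$ for $r>s$). Here convexity enters decisively: for any $v\in\bbR^n$, $u\mapsto\abs{\Psi^r_u v}^2$ has derivative $-2k\ang{D^2\phi_k(X^k_u)\Psi^r_u v,\,\Psi^r_u v}\le 0$ because $D^2\phi_k\succeq 0$, so $\abs{\Psi^r_s v}\le\abs{\Psi^r_r v}=c\abs{v}$, and hence $\abs{D_r X^k_s}\le c\sqrt{n}=:M$ for every $k$, $t$, $x$, $r$ and $s$.

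Finally I would pass to the limit: $X^k_s\to X^{t,x}_s$ in $L^2(\p)$ while $\norm{DX^k_s}_{L^2(\Omega\times[t,T];\bbR^{n\times n})}\le M\sqrt{T}$ uniformly in $k$, so closedness of the Malliavin derivative (Lemma 1.2.3 of Nualart \cite{Nualart}) yields $X^{t,x}_s\in\bbD^{1,2}_t$ together with weak $L^2$-convergence of $DX^k_s$ to $DX^{t,x}_s$ along a subsequence. The set $\crl{G:\abs{G_r(\omega)}\le M\ dr\otimes d\p\text{-a.e.}}$ is convex and closed, hence weakly closed, and contains every $DX^k_s$, so it contains $DX^{t,x}_s$; this is exactly $\abs{D_r X^{t,x}_s}\le M$ $dr\otimes d\p$-a.e., with $M=c\sqrt{n}$ independent of $(t,s,x)$, so \eqref{condrefl} holds. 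I expect the only genuinely non-routine point to be isolating the role of convexity, namely the sign condition $D^2\phi_k\succeq 0$, which makes the variational equation for $D_r X^k$ non-expansive and thus produces the bound uniform in $k$; the penalization convergence and the closedness argument are standard, and the mild nuisance that $d(\cdot,\bar\scO)^2$ is only $C^{1,1}$ for a polyhedron is handled by the mollification. Alternatively, one could quote the Lipschitz continuity of the Skorokhod map on a convex domain to write $X^{t,x}_s=\varphi(W^t)$ with $\varphi$ Lipschitz in the supremum norm and then invoke Proposition \ref{lipschitzprop} directly.
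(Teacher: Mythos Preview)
Your proposal is correct. The paper's own proof is precisely your closing one-sentence alternative: it quotes Theorems~2.1 and~2.2 of Dupuis--Ishii~\cite{DI} to conclude that $X^{t,x}_s$ is Lipschitz in $W$ with constants independent of $(t,s,x)$, and then applies Proposition~\ref{lipschitzprop}.

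Your main argument, via penalization, takes a genuinely different route. It is more self-contained---no external Skorokhod-map Lipschitz result is needed---and it makes the mechanism transparent: convexity of $\scO$ gives $D^2\phi_k\succeq 0$, which renders the linear variational equation for $D_rX^k$ non-expansive and produces the uniform bound $M=c\sqrt{n}$ directly, without appealing to the polyhedral structure. The cost is that you must supply the penalization convergence; since you combine the smoothing and penalty indices by setting $\phi_k=\phi\ast\eta_{1/k}$, this is slightly non-standard, and a sentence noting that the spurious drift $k\nabla\phi_k$ inside $\bar\scO$ stays $O(1)$ uniformly in $k$ (because $\abs{\nabla\phi_k(x)}\le d(x,\bar\scO)+1/k$) would round out the argument. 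The paper's route is much shorter but leans entirely on the cited Dupuis--Ishii result, whereas your penalization argument would go through for any convex $\scO$ satisfying Saisho's conditions.
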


\begin{proof}
It follows from Theorems 2.1 and 2.2 in Dupuis and Ishii \cite{DI}
that $X^{t,x}_s$ is Lipschitz continuous in $W$ with constants $A_1, \dots, A_n$ 
independent of $t,s$ and $x$. So the statement follows from
Proposition \ref{lipschitzprop}.
\end{proof}

\subsection{Semilinear parabolic PDEs with lateral Neumann boundary conditions}

Assume that $\scO \subset \mathbb{R}^n$ is bounded and there exists a function 
$w \in C^2(\bbR^n)$ with bounded derivatives of first and second order
such that $\scO=\{w >0\}, \partial \scO=\{w=0\}, \bbR^n \setminus \bar\scO=\crl{w <0}$, 
and $|\nabla w(x)|=1$ for $x\in\partial \scO$. Then $\scO$ satisfies the uniform exterior sphere 
condition and uniform interior cone condition. So for all 
$(t,x) \in [0,T] \times \bar\scO$, there exists a unique pair of continuous adapted processes 
$(X^{t,x},L^{t,x})$ with values in $\bar \scO \times \bbR_+$ such that
\be 
\begin{aligned}
X^{t,x}_{s}& =x+\int_{t}^{s}b(X^{t,x}_{r})dr+\int_{t}^{s}\sigma(X^{t,x}_r) dW_{r}
+\int_t^{s}\nabla w(X_r^{t,x})dL^{t,x}_r\\
L^{t,x}_s&=\int_t^{s}1_{\{X_r^{t,x}\in \partial \scO\}}dL_r^{t,x} 
\quad \mbox{and} \quad L^{t,x} \text{ is nondecreasing.}\notag
\end{aligned}
\ee
If the forward process is of this form, the Markovian BSDE \eqref{nbsde}
is related to the following parabolic PDE with lateral Neumann boundary conditions:
\be \label{npde}
\begin{aligned}
& u_t(t,x)+\scL_{x}u(t,x)+g\left(t,x,u(t,x),(\nabla u\sigma)(t,x)\right)=0\quad \text{ for } (t,x)\in(0,T)\times \scO\\
& \frac{\partial u}{\partial {n}}(t,x)=0 \quad \text{ for } (t,x)\in(0,T)\times\partial \scO\quad\text{ and }\quad
u(T,x)=h(x)\quad \text{ for } x \in \bar \scO,
\end{aligned} 
\ee where 
$$
\frac{\partial}{\partial {n}} :=\sum_{i=1}^n\frac{\partial w}{\partial x_i}(x)\frac{\partial}{\partial x_i},\quad\text{ and }\quad
\scL_{x} :=\frac{1}{2}\sum_{i,j}(\sigma \sigma^{T})_{ij}(x)\partial_{x_{i}}\partial_{x_{j}}+\sum_{i}b_{i}(x)\partial_{x_{i}}.
$$

\begin{proposition} \label{nbsdepde}
Assume condition \eqref{condrefl} holds and $g, h$ satisfy {\rm (D1)--(D4)} with 
$$N =\sqrt{n}\brak{A+\frac{1-e^{BT}}{B}G}Me^{BT} \quad \mbox{instead of} \quad
N = \sqrt{n}\brak{A+\frac{1-e^{BT}}{B}G}Ee^{(B+F)T}.$$ Let
$(Y^{t,x},Z^{t,x})$ be the solution of the BSDE \eqref{nbsde}.
Then, $u(t,x):=Y^{t,x}_t$ is a viscosity solution of the PDE \eqref{npde} satisfying
$|u(t,x)| \le (C+1) e^{D(T-t)} - 1$ for all $(t,x) \in [0,T] \times \bar{\scO}$.
\end{proposition}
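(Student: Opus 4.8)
The plan is to follow the pattern of Corollary~\ref{corpde}, replacing the Pardoux--Peng representation for non-reflected diffusions by its analog for normally reflected diffusions. First I would produce the bounded solution: under \eqref{condrefl} and (D1)--(D4) with $N$ as redefined in the statement, Proposition~\ref{propnbsde} (in its second form) gives, for each $(t,x)\in[0,T]\times\bar\scO$, a unique $(Y^{t,x},Z^{t,x})\in\bbS^\infty_t(\bbR)\times\bbH^\infty_t(\bbR^n)$ solving \eqref{nbsde} with
$$
|Y^{t,x}_s|\le (C+1)e^{D(T-s)}-1\le R:=(C+1)e^{DT}-1,\qquad |Z^{t,x}_s|\le N\quad ds\otimes d\p\text{-a.e.}
$$
Taking $s=t$ in the first bound already yields the estimate on $u(t,x)=Y^{t,x}_t$ claimed in the proposition, and shows that $(Y^{t,x},Z^{t,x})$ stays, uniformly in $(t,x)$, inside the region $\{|y|\le R,\ |z|\le N\}$ on which $g$ obeys (D2)--(D4).

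Second I would truncate the driver. Composing $g$ with the radial cut-offs $y\mapsto\tilde y$ at level $R$ and $z\mapsto\tilde z$ at level $N$ used in the proof of Corollary~\ref{cormain}, and adjusting outside if needed, one obtains $\tilde g:[0,T]\times\bar\scO\times\bbR\times\bbR^n\to\bbR$ that agrees with $g$ on $\{|y|\le R,\ |z|\le N\}$ and is globally Lipschitz in $(x,y,z)$; by (D1) and (C1), $h$ is already Lipschitz. Since the integrand $g(r,X^{t,x}_r,Y^{t,x}_r,Z^{t,x}_r)$ is unchanged when $g$ is replaced by $\tilde g$, the pair $(Y^{t,x},Z^{t,x})$ also solves \eqref{nbsde} with driver $\tilde g$.

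Third I would invoke the known correspondence between Markovian BSDEs driven by normally reflected diffusions and viscosity solutions of semilinear parabolic PDEs with Neumann boundary conditions --- the reflected counterpart, due to Pardoux and Zhang, of Theorem~4.3 of Pardoux and Peng~\cite{PP92}. Its hypotheses hold here: $\scO$ is bounded with boundary $\{w=0\}$, $w\in C^2$ and $|\nabla w|=1$ on $\partial\scO$, so that the reflection field $\nabla w$ is exactly the inward normal appearing in $\partial/\partial n$; $b$ and $\sigma$ are bounded and Lipschitz; and $\tilde g, h$ are Lipschitz. This makes $(t,x)\mapsto Y^{t,x}_t$ continuous and a viscosity solution of \eqref{npde} with $g$ replaced by $\tilde g$. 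Finally, since $u$ takes values in $[-R,R]$ and, in the viscosity formulation, the bound $|Z^{t,x}|\le N$ forces $|\nabla u\sigma|\le N$ at points where test functions touch $u$, the nonlinearities $\tilde g$ and $g$ cannot be told apart, so $u$ is a viscosity solution of \eqref{npde} itself; combined with the first step this completes the proof.

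The step I expect to be the main obstacle is the third one. Quoting the reflected BSDE--PDE correspondence precisely requires the regularity of $\partial\scO$ and of the reflection direction (furnished by the standing assumptions of this section, via Saisho's uniform exterior sphere and interior cone conditions), the continuity of $(t,x)\mapsto(X^{t,x},Y^{t,x})$, and the fact that the Neumann condition is attained in the viscosity sense --- none of which is automatic. The identification at the end, between the PDE with driver $\tilde g$ and the one with driver $g$, also needs care: unlike the classical-solution arguments of Corollary~\ref{corclassical}, where $|\nabla u\sigma|=|Z|\le N$ is a genuine pointwise bound, in the viscosity setting one relies on the weaker statement that $Z$ represents $\nabla u\sigma$, so a short additional argument (or a direct appeal to the version of the correspondence theorem that allows drivers locally Lipschitz in $z$ with bounded $Z$) is required. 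Since no extra smoothness of $g$ is assumed, one should not expect a classical solution or an easy uniqueness statement here, in contrast with the Dirichlet case treated in Theorem~\ref{pth} and Corollary~\ref{pdebdt}.
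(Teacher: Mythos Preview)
Your approach is essentially the same as the paper's: use Proposition~\ref{propnbsde} to obtain the bounds $|Y^{t,x}|\le R$ and $|Z^{t,x}|\le N$, replace $g$ by a Lipschitz truncation $\tilde g$ agreeing with $g$ on $\{|y|\le R,\ |z|\le N\}$, and then invoke the Pardoux--Zhang correspondence \cite{PZ} between BSDEs driven by normally reflected diffusions and viscosity solutions of Neumann problems. The paper's proof is in fact just a terse version of your three steps.

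The concern you raise in your last paragraph --- passing from ``$u$ is a viscosity solution of \eqref{npde} with driver $\tilde g$'' to ``$u$ is a viscosity solution of \eqref{npde} with driver $g$'' --- is legitimate and is glossed over in the paper's proof as well (as it already is in Corollary~\ref{corpde}). The bound $|u|\le R$ handles the $y$-argument, but controlling $|\nabla\phi\,\sigma|$ at viscosity test points does not follow directly from $|Z^{t,x}|\le N$. In practice one either regards the statement as asserting a viscosity solution for the PDE with any Lipschitz extension of $g|_{\{|y|\le R,\,|z|\le N\}}$, or one appeals to the fact that the Pardoux--Zhang proof of the viscosity property proceeds through the BSDE solution itself (so the driver is only ever evaluated along $(Y^{t,x},Z^{t,x})$, where $\tilde g=g$). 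Either reading is consistent with how the paper uses the result downstream (Propositions~\ref{unbsdepde} and~\ref{npdeclassical}).
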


\begin{proof}
One can assume that $g$ is Lipschitz
in $(x,y,z)$ by modifying it for large $(x,y,z)$. Then the results of Pardoux and Zhang 
\cite{PZ} apply, and one obtains that $u(t,x):=Y^{t,x}_t$ is a viscosity solution of the PDE \eqref{npde}.
By Proposition \ref{propnbsde}, it is bounded by $(C+1) e^{D(T-t)}-1$.
\end{proof}

If one makes stronger assumptions on $\scO,b,\sigma$ and $g$, 
the viscosity solution $u$ of Proposition \ref{nbsdepde} is unique.
We denote by ${\cal S}^n$ the set of all symmetric $n \times n$-matrices
and define the function $F : [0,T]\times\bbR^n\times\bbR\times\bbR^n\times\scS^n \to \mathbb{R}$ by 
$$
F(t,x,y,v,S) := 
-\half \sum_{i,j}(\sigma\sigma^T)_{ij}(x) S_{ij} - \sum_i b_i(x)v_i -g(T-t,x,y,v\sigma(x)).
$$

\begin{proposition} \label{unbsdepde}
Assume the boundary function $w$ is $C^3$ with bounded derivatives of first, second and 
third order, $g$ is continuous in $(t,x,y,z)$ and the conditions of Proposition \ref{nbsdepde} hold. 
Moreover, suppose that
for all $L, L'\in\bbR_+$, there exist a constant $\gamma_L \in \bbR$ and a function 
$\delta_{L,L'} : \mathbb{R}_+ \to \mathbb{R}_+$ satisfying $\lim_{x \downarrow 0} 
\delta_{L,L'}(x) = 0$ such that the following two conditions hold:
\begin{itemize}
\item[{\rm (i)}]
$g(t,x,y',v\sigma(x))-g(t,x,y,v \sigma(x)) \ge \gamma_L(y-y')$
for all $(t,x)\in[0,T]\times\bar\scO$,  $-L \le y'\leq y \le L$ and $v\in\bbR^n$.
\item[{\rm (ii)}]
$$
F(t,x',y,v',S')-F(t,x,y,v,S) \le \delta_{L,L'}\brak{\eta+|x-x'|(1+|v| \vee |v'|)+\frac{|x-x'|^2}{\veps^2}}
$$
for all $\eta, \varepsilon \in (0,1]$, $t \in[0,T]$, $x,x' \in \bar\scO$, $|y| \le L$, 
$v,v'\in\bbR^n$ and $S,S'\in\scS^n$ satisfying the following three properties:
\beas
&& -\frac{L'}{\veps^2}Id \le \left( \begin{array}{ccc}
S & 0\\
0 & -S' \end{array} \right)\leq\frac{L'}{\veps^2}\left( \begin{array}{ccc}
Id & -Id\\
-Id & Id \end{array} \right)+ L'\eta Id\\
&& |v-v'|\leq L' \eta\veps(1+|v| \wedge |v'|)\\
&& |x-x'| \leq L' \eta \varepsilon.
\eeas
\end{itemize}
Let  $(Y^{t,x}, Z^{t,x})$ be the solution of the BSDE \eqref{nbsde}. Then
$u(t,x):=Y^{t,x}_t$ is the unique viscosity solution of the PDE 
\eqref{npde}.
\end{proposition}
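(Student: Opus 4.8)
The plan is to deduce the statement from a \emph{comparison principle} for bounded viscosity solutions of \eqref{npde}. Since $\bar\scO$ is compact, every viscosity solution of \eqref{npde} is continuous on $[0,T]\times\bar\scO$ and hence bounded, so it suffices to prove: whenever $u_1$ is a bounded viscosity subsolution and $u_2$ a bounded viscosity supersolution of \eqref{npde} with $u_1(T,\cdot)\le u_2(T,\cdot)$ on $\bar\scO$, one has $u_1\le u_2$ on $[0,T]\times\bar\scO$. Applying this once with $(u_1,u_2)=(u,v)$ and once with $(u_1,u_2)=(v,u)$, where $u(t,x)=Y^{t,x}_t$ is the viscosity solution furnished by Proposition \ref{nbsdepde} and $v$ is an arbitrary viscosity solution of \eqref{npde}, then gives $u=v$. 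Reversing time via $\tilde u_i(t,x):=u_i(T-t,x)$ turns \eqref{npde} into the parabolic Neumann problem
\[
\partial_t\tilde u(t,x)+F\big(t,x,\tilde u(t,x),\nabla\tilde u(t,x),D^2\tilde u(t,x)\big)=0 \quad\text{in }(0,T)\times\scO,\qquad \partial\tilde u/\partial n=0 \text{ on }(0,T)\times\partial\scO,
\]
with $\tilde u_1(0,\cdot)\le\tilde u_2(0,\cdot)$ and $F$ the function defined just before the proposition; conditions {\rm (i)} and {\rm (ii)} are exactly the structural hypotheses under which the standard comparison theorem for such problems applies, namely the local linear monotonicity of $F$ in the $y$-slot and a joint modulus of continuity of $F$ in $(x,v,S)$ matched to the matrices produced by the parabolic theorem on sums, with the penalization parameters $\eta$ and $\veps$ built in.

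Concretely, I would argue by doubling of variables. After the usual reduction to the proper case (replacing $\tilde u_i$ by $e^{-\Gamma t}\tilde u_i$, with $\Gamma$ chosen from $\gamma_L$ and the common bound $L:=\|u_1\|_\infty\vee\|u_2\|_\infty$), fix $\theta>0$ and work with the strict subsolution $\tilde u_1^\theta(t,x):=\tilde u_1(t,x)-\theta/(T-t)$, which tends to $-\infty$ as $t\uparrow T$. Then $\sup(\tilde u_1^\theta-\tilde u_2)$ is attained at some $(\hat t,\hat x)$ with $\hat t<T$; if $\hat t=0$ we are done by the initial inequality, so assume for contradiction that this supremum is positive. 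For $\eta,\veps\in(0,1]$ I would maximize over $[0,T]^2\times\bar\scO^2$ the function
\[
\Psi(t,x,s,y):=\tilde u_1^\theta(t,x)-\tilde u_2(s,y)-\frac{|t-s|^2}{2\eta}-\Phi_\veps(x,y),
\]
where $\Phi_\veps$ is the boundary-adapted penalization used for Neumann problems (a term of size $|x-y|^2/\veps^2$ corrected by a multiple of $w(x)+w(y)$, or with $|x-y|^2$ replaced by $|x-y-\lambda(\nabla w(x)-\nabla w(y))|^2$, exploiting that $w\in C^3$ has bounded derivatives and $|\nabla w|=1$ on $\partial\scO$). This correction is designed so that at any maximizer $(\bar t,\bar x,\bar s,\bar y)$ the $x$-gradient of $\Phi_\veps$ points out of $\scO$ when $\bar x\in\partial\scO$ and its $(-y)$-gradient points out of $\scO$ when $\bar y\in\partial\scO$; the Neumann conditions $\partial\tilde u_i/\partial n=0$ then exclude those boundary alternatives, leaving only the interior equations to be tested.

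Applying the parabolic theorem on sums at $(\bar t,\bar x,\bar s,\bar y)$ produces the usual ingredients: times $a_1,a_2$ with $a_1-a_2=(\bar t-\bar s)/\eta$, spatial gradients $v=D_x\Phi_\veps(\bar x,\bar y)$ and $v'=-D_y\Phi_\veps(\bar x,\bar y)$, and matrices $S,S'\in\scS^n$ satisfying the matrix inequality written in {\rm (ii)} for a universal $L'$ depending only on the $C^2$-norm of $\Phi_\veps$. Subtracting the supersolution inequality for $\tilde u_2$ from the strict subsolution inequality for $\tilde u_1^\theta$, dropping the boundary terms by the construction of $\Phi_\veps$, and using {\rm (i)} to absorb the gap in the $y$-slot, one arrives at $c\,\theta\le F(\bar s,\bar y,r,v',S')-F(\bar t,\bar x,r,v,S)+o_{\eta,\veps}(1)$ for some $c>0$ depending on $T$ and a common value $r$. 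The classical estimates for the penalized maximum give $|\bar t-\bar s|\to0$, $|\bar x-\bar y|\to0$ and $|\bar x-\bar y|^2/\veps^2\to0$ along a suitable joint limit $\eta,\veps\to0$, and along it $(v,v',S,S',\bar x,\bar y)$ satisfies the three bulleted requirements of {\rm (ii)}; hence {\rm (ii)} bounds $F(\bar s,\bar y,r,v',S')-F(\bar t,\bar x,r,v,S)$ by $\delta_{L,L'}$ of a quantity tending to $0$, forcing $c\theta\le0$, a contradiction. Letting $\theta\downarrow0$ yields $u_1\le u_2$, which proves the comparison principle and hence the uniqueness claim.

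The step I expect to be the main obstacle is the treatment of the Neumann boundary: constructing the penalization $\Phi_\veps$ so that the doubled maximizers can never be pinned to $\partial\scO$ in the direction that would obstruct the argument, which is precisely what allows $\partial u/\partial n=0$ to be invoked and what dictates the auxiliary parameters $\eta,\veps$ appearing in {\rm (i)}--{\rm (ii)}. Verifying the quantitative hypotheses of {\rm (ii)} for the specific $v,v',S,S'$ coming from the theorem on sums, and organizing the joint limit in $\eta$ and $\veps$, is routine but is where the bookkeeping concentrates; the continuity of $g$ is used only to make the viscosity formulation and these stability estimates meaningful.
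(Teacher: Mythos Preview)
Your sketch is essentially a reconstruction of the comparison principle for nonlinear Neumann problems, and the structure you describe (exponential reduction to a proper equation, the $\theta/(T-t)$ strict subsolution, doubling of variables with a boundary-adapted test function built from $w$, the parabolic theorem on sums, then feeding the output into hypotheses {\rm (i)}--{\rm (ii)}) is the right one. However, this is not the route the paper takes. The paper's proof is two lines: existence of the viscosity solution $u(t,x)=Y^{t,x}_t$ is inherited from Proposition~\ref{nbsdepde}, and uniqueness is obtained by a direct citation of Theorem~3.1 in Barles~\cite{Barles}. Conditions {\rm (i)} and {\rm (ii)} in the statement are precisely the hypotheses of that theorem (monotonicity in $y$ and the structure condition on $F$ matched to the matrices from Ishii's lemma), and the $C^3$ assumption on $w$ is what Barles requires for the boundary test-function construction you describe.

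So your approach is not wrong, but it reproves what the paper outsources. The upside of your version is that it is self-contained and makes transparent exactly where each hypothesis enters: the $C^3$ regularity of $w$ in the construction of $\Phi_\veps$, {\rm (i)} in absorbing the $y$-gap after the exponential change of unknown, and {\rm (ii)} in the final estimate after the theorem on sums. The paper's version is much shorter and avoids the delicate bookkeeping you flag---in particular, verifying that the specific $(v,v',S,S')$ produced by the theorem on sums actually satisfy the three auxiliary constraints in {\rm (ii)} with the parameters $\eta,\veps$ as written, and organizing the joint limit, is nontrivial and is exactly what Barles' paper carries out in detail. If you intend your write-up to stand alone, you should either commit to filling in those details or, as the paper does, cite Barles directly.
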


\begin{proof}
By Proposition \ref{nbsdepde}, $u(t,x) := Y^{t,x}_t$ is a viscosity solution of \eqref{npde}.
Uniqueness follows from Theorem 3.1 of Barles \cite{Barles}.
\end{proof}

The unique viscosity solution of Proposition \ref{unbsdepde} is
actually of class $C^{1,2}$ if one strengthens the assumptions.

\begin{proposition} \label{npdeclassical}
Assume the conditions of Proposition 6.4 are satisfied and the following hold:
\begin{itemize}
\item[{\rm (i)}]
$\sigma$ is $C^{2}(\bar\scO)$ with bounded derivatives of first and second order
and there exists a constant $\varepsilon > 0$ such that
$\sum_{i,j}\brak{\sigma\sigma^T}_{ij}(x)v_iv_j\ge \varepsilon |v|^2$ for all $x,v\in\bbR^n$.
\item[{\rm (ii)}]
$b(x)v +g(t,x,y,v \sigma(x))$ is continuously differentiable in $(t,x,y,v)$
\item[{\rm (iii)}]
$h \equiv 0$.
\end{itemize}
Then the PDE \eqref{npde} has a unique $C^{1,2}$-solution $u$, and
\bea
\label{ueps}
|u(t,x)|&\le& e^{D(T-t)}-1\\
\label{nablaueps}
|\nabla u(t,x)|&\le& \sqrt{\frac{n}{\veps}}\brak{A + \frac{1-e^{-B(T-t)}}{B}G} M e^{B(T-t)}.
\eea
\end{proposition}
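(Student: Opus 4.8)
The plan is to imitate the passage from BSDE solutions to classical PDE solutions carried out in Corollaries \ref{corclassical} and \ref{pdebdt}: first produce a bounded solution of the reflected Markovian BSDE \eqref{nbsde} with bounded control process, then truncate the driver outside the region actually reached by $(Y,Z)$ so that it becomes smooth with bounded derivatives, and finally invoke a classical regularity theorem for the semilinear Neumann problem.

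Since the conditions of Proposition \ref{unbsdepde}, and hence those of Propositions \ref{nbsdepde} and \ref{propnbsde}, are assumed, the BSDE \eqref{nbsde} has for every $(t,x)\in[0,T]\times\bar\scO$ a unique solution $(Y^{t,x},Z^{t,x})\in\mathbb{S}^\infty_t(\mathbb{R})\times\mathbb{H}^\infty_t(\mathbb{R}^n)$, and $u(t,x):=Y^{t,x}_t$ is the unique bounded viscosity solution of \eqref{npde}. Because $h\equiv 0$ we may take $C=0$ in (D1), so Proposition \ref{propnbsde} already gives $|u(t,x)|=|Y^{t,x}_t|\le e^{D(T-t)}-1$, which is \eqref{ueps}, together with $|Z^{t,x,i}_s|\le\brak{A+\frac{1-e^{-B(T-s)}}{B}G}Me^{B(T-s)}$, hence $|Z^{t,x}_s|\le N:=\sqrt{n}\brak{A+\frac{1-e^{-BT}}{B}G}Me^{BT}$ for all $s$.

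Next I would replace $g$ outside the compact set $\crl{(y,z)\in\mathbb{R}\times\mathbb{R}^n:\ |y|\le e^{DT}-1,\ |z|\le N}$ by a driver $\tilde g$ that agrees with $g$ on it and is smooth in $(t,x,y,z)$ with globally bounded derivatives, preserving the ellipticity in (i) and the $C^1$-structure in (ii); by construction $(Y^{t,x},Z^{t,x})$ still solves \eqref{nbsde} with $\tilde g$ in place of $g$. Because $w$ is $C^3$ with bounded derivatives up to third order, $\sigma$ is $C^2$ with bounded derivatives and uniformly elliptic, and the terminal datum $h\equiv 0$ satisfies the compatibility condition $\partial h/\partial n=0$ on $\partial\scO$ trivially, one can then appeal to the classical regularity theory for the oblique-derivative parabolic problem — the Neumann counterpart of the result of Peng \cite{Peng91} used in Corollary \ref{pdebdt}, in the reflected-diffusion setting of Pardoux--Zhang \cite{PZ} — to conclude that $u\in C^{1,2}([0,T]\times\bar\scO)$ and $Z^{t,x}_t=(\nabla u\,\sigma)(t,x)$. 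Since $|Z^{t,x}_t|^2=\nabla u(t,x)(\sigma\sigma^T)(x)\nabla u(t,x)^\top\ge\varepsilon\,|\nabla u(t,x)|^2$ by (i), the $Z$-bound at $s=t$ yields $|\nabla u(t,x)|\le\varepsilon^{-1/2}|Z^{t,x}_t|\le\sqrt{\tfrac{n}{\veps}}\brak{A+\frac{1-e^{-B(T-t)}}{B}G}Me^{B(T-t)}$, which is \eqref{nablaueps}.

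For uniqueness, let $v$ be any $C^{1,2}$-solution of \eqref{npde}; since $\bar\scO$ is compact, $v$ and $\nabla v$ are bounded on $[0,T]\times\bar\scO$. Applying It\^o's formula to $v(s,X^{t,x}_s)$ and using that the reflection contributes $\frac{\partial v}{\partial n}(s,X^{t,x}_s)\,dL^{t,x}_s$, which vanishes because $\partial v/\partial n=0$ on $\partial\scO$ and $dL^{t,x}$ is carried by $\crl{X^{t,x}\in\partial\scO}$ (while the reflected diffusion spends zero Lebesgue time on $\partial\scO$), one sees that $\brak{v(s,X^{t,x}_s),(\nabla v\,\sigma)(s,X^{t,x}_s)}$ is a solution of \eqref{nbsde} in $\mathbb{S}^\infty_t(\mathbb{R})\times\mathbb{H}^\infty_t(\mathbb{R}^n)$; by the uniqueness part of Proposition \ref{propnbsde} it equals $(Y^{t,x},Z^{t,x})$, and setting $s=t$ gives $v=u$. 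I expect the main obstacle to be the classical regularity step: one has to verify that after the truncation the modified equation genuinely falls under the hypotheses of an available interior-and-boundary $C^{1,2}$ regularity theorem for semilinear oblique-derivative problems — in particular that the boundary operator induced by $\nabla w$ is smooth and non-tangential and that the corner compatibility conditions along $\crl{T}\times\partial\scO$ hold — and then to read off from it the identification $Z^{t,x}_t=(\nabla u\,\sigma)(t,x)$ that links the BSDE bounds to the gradient bound \eqref{nablaueps}.
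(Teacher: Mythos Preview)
Your outline is correct and matches the paper's approach in spirit; two differences are worth noting. First, for the classical regularity step you flag as the main obstacle, the paper does not look for a probabilistic Neumann analogue of Peng's result: it truncates $g$ to make it Lipschitz and then invokes directly Theorem~V.7.4 of Lady\v{z}enskaja--Solonnikov--Ural'ceva \cite{Lady} for the quasilinear oblique-derivative parabolic problem, after which viscosity uniqueness (Proposition~\ref{unbsdepde}) forces that classical solution to coincide with $u(t,x)=Y^{t,x}_t$; Pardoux--Zhang \cite{PZ} is used only for the identification $Y^{t,x}_s=u(s,X^{t,x}_s)$. Second, for the gradient bound the paper does not use your global It\^o argument relying on the Neumann condition to kill the $dL$-term. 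Instead it fixes $(t,x)\in(0,T)\times\scO$, chooses $\alpha>0$ with $\{y:|y-x|\le\alpha\}\subset\scO$, stops $X^{t,x}$ at $\tau^{t,x}:=\inf\{s\ge t:|X^{t,x}_s-x|\ge\alpha\}\wedge(t+\alpha)$, and checks that both $\big(Y^{t,x}_{s\wedge\tau^{t,x}},\,Z^{t,x}_s1_{\{s\le\tau^{t,x}\}}\big)$ and $\big(u(s\wedge\tau^{t,x},X^{t,x}_{s\wedge\tau^{t,x}}),\,(\nabla u\,\sigma)(s,X^{t,x}_s)1_{\{s\le\tau^{t,x}\}}\big)$ solve the same standard Lipschitz BSDE on $[t,t+\alpha]$ with terminal value $u(\tau^{t,x},X^{t,x}_{\tau^{t,x}})$; this localization yields $(\nabla u\,\sigma)(t,x)=Z^{t,x}_t$ without ever touching the boundary. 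Your global route works as well, but bear in mind that it gives $Z^{t,x}_s=(\nabla u\,\sigma)(s,X^{t,x}_s)$ only $ds\otimes d\mathbb{P}$-a.e., so reading off the bound at $s=t$ still requires the continuity of $\nabla u$ that comes from $u\in C^{1,2}$.
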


\begin{proof}
We can assume that $g$ is Lipschitz in $(x,y,z)$ by modifying it for large $(x,y,x)$.
Then it follows from Theorem V.7.4 of Ladyzenskaja et al. \cite{Lady} that there exists a 
$C^{1,2}$ solution. So the unique viscosity solution $u$ of Proposition \ref{unbsdepde} is $C^{1,2}$.
From Pardoux and Zhang \cite{PZ}, we know that $Y^{t,x}_s=u(s,X^{t,x}_s)$. 
Since $h \equiv 0$, one obtains from Proposition \ref{nbsdepde} that $u$ satisfies \eqref{ueps}.
Now fix $(t,x) \in (0,T) \times \scO$ and let $\alpha > 0$ be a constant such that $\crl{y \in \mathbb{R}: |y-x|\leq\alpha} \subset \scO$.
Define the stopping time
$\tau^{t,x}:=\inf\crl{s\geq t: |X^{t,x}_s-x| \ge \alpha} \wedge (t+\alpha)$.
Then $(Y^{t,x}_{s \wedge \tau^{t,x}}, Z^{t,x}_s 1_{\crl{s \le \tau^{t,x}}})$ 
and $(u(s\wedge\tau^{t,x},X^{t,x}_{s \wedge\tau^{t,x}}), (\nabla
u\sigma) (s,X^{t,x}_s)1_{\crl{s \le \tau^{t,x}}})$ are bounded solutions of the BSDE
\be \label{bsdestopped}
\tilde{Y}_s=u(\tau^{t,x},X^{t,x}_{\tau^{t,x}}) + \int_s^{t+\alpha} g(r,X^{t,x}_r,
\tilde{Y}^{t,x}_r, \tilde{Z}^{t,x}_r) 1_{\crl{s \le \tau^{t,x}}} dr-\int_s^{t+\alpha} \tilde{Z}^{t,x}_r dW_r,
\ee
on $[t,t+\alpha]$. By modifying $g$ for large $(x,y,z)$, one can assume that it is Lipschitz in 
$(x,y,z)$. Then \eqref{bsdestopped} is a standard BSDE and has a unique solution. 
Therefore, one obtains from Proposition \ref{propnbsde} that
$$
|(\nabla u \sigma)(s,X^{t,x}_s) 1_{\crl{s \le \tau^{t,x}}}| =
|Z^{t,x}_s 1_{\crl{s \le \tau^{t,x}}}| \le \sqrt{n}\brak{A + \frac{1-e^{-B(T-s)}}{B}G}
M e^{B(T-s)}\quad ds\otimes d\bbP\text{-a.e.}
$$ on $[t,t+\alpha]$, and in particular,
$$
|(\nabla
u\sigma)(t,x)|\leq \sqrt{n}\brak{A + \frac{1-e^{-B(T-t)}}{B}G}
M e^{B(T-t)},
$$ 
which by condition (i), gives the bound \eqref{nablaueps}.
\end{proof}

As a consequence of Propositions \ref{propnbsde}, \ref{nbsdepde}, 
\ref{unbsdepde} and \ref{npdeclassical} one obtains the following result for PDEs of the form:
\be \label{1dpde}
\begin{aligned}
& u_t = u_{xx} +g(u,u_x)\quad \text{on } [0,T] \times (c,d)\\
& u_x = 0 \quad \text{on } \bbR_+ \times \crl{c,d}
\quad\text{and} \quad u(0,x)=h(x) \quad \text{for } x \in  (c,d),
\end{aligned}
\ee
where $u: [0,T] \times [c,d] \to\bbR$.

\begin{corollary}
Assume $h$ satisfies {\rm(C1)} and $g$ fulfills {\rm(D2)}. 
Then \eqref{1dpde} has a viscosity solution $u$ satisfying
$$
|u(t,x)| \le \brak{\sup_{c < x < d} |h(x)|+1} e^{Dt}-1, \quad (t,x) \in [0,T] \times [c,d].
$$
Moreover, if $g$ is continuous in $y$, for every $L \in \bbR_+$, 
there exists a constant $\gamma_L \in \bbR$ such that
for all $-L \le y' \le y \le L$ and $z \in \bbR^n$, one has
\begin{equation}\label{addu}
g(y',z)-g(y,z) \ge \gamma_L(y-y')
\end{equation}
and $F(t,x,y,v,S)= \sum_{i,j} S_{ij} -g(y,v)$ satisfies condition {\rm (ii)} of Proposition \ref{unbsdepde},
then $u$ is the unique viscosity solution.
If in addition, $h \equiv 0$ and $g$ is $C^1$, then $u$ is $C^{1,2}$ and satisfies
$$
|u_x (t,x)| \le 3A e^{Bt} \quad \mbox{for all } (t,x) \in [0,T] \times [c,d].
$$
\end{corollary}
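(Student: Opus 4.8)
The plan is to specialize the Neumann results of this section exactly as Corollary \ref{corsemiheat} specializes those of Section \ref{sec:pde}: take $m=n=1$, $\scO=(c,d)$, $b\equiv 0$ and $\sigma\equiv\sqrt2$, so that $\scL_x=\partial_{xx}$ and $\nabla u\,\sigma=\sqrt2\,u_x$, and push the factor $\sqrt2$ into the driver by working with $\tilde g(y,z):=g(y,z/\sqrt2)$. A time reversal $u(t,x)=v(T-t,x)$ then turns the terminal-value PDE \eqref{npde} for $v$ into the initial-value PDE \eqref{1dpde} for $u$; and, choosing a $C^\infty$ boundary function $w$ that equals $x-c$ near $c$ and $d-x$ near $d$, one has $\nabla w(c)=1$, $\nabla w(d)=-1$, so the lateral Neumann condition $\partial v/\partial n=0$ becomes $v_x=0$ at $c$ and $d$, as required in \eqref{1dpde}. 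The forward process is then $\sqrt2$-scaled Brownian motion on $(c,d)$ with normal reflection, and since $(c,d)$ is a convex polyhedron with $b=0$ and $\sigma$ a constant multiple of the identity, Lemma \ref{lemmarefl} applies and \eqref{condrefl} holds with some constant $M$.

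For part a) I would first check that $(\tilde g,h)$ satisfy (D1)--(D4): $\scO$ is a bounded interval and $h$ is Lipschitz by (C1), hence bounded by $C:=\sup_{c<x<d}|h(x)|$, giving (D1); (D2) passes from $g$ to $\tilde g$ because $\rho$ is nondecreasing and $1/\sqrt2\le 1$; and (D3)--(D4) hold trivially with $G=H=0$ since $\tilde g$ does not depend on $x$ (and $\int_0^T \tilde g(t,0,0,0)^2\,dt=T\,g(0,0)^2<\infty$). Proposition \ref{nbsdepde} then gives a viscosity solution $v$ of \eqref{npde} with $|v(t,x)|\le(C+1)e^{D(T-t)}-1$, and reversing time yields a) with its bound. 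For the uniqueness assertion, the extra hypotheses are precisely conditions (i)--(ii) of Proposition \ref{unbsdepde} rewritten for this data: since $\tilde g(y,v\sigma(x))=g(y,v)$, condition (i) there is exactly \eqref{addu}, condition (ii) is the stated hypothesis on $F$, and $w\in C^3$ with $g$ continuous (which follows from (D2)); so Proposition \ref{unbsdepde} gives that $v$ is the unique viscosity solution of \eqref{npde}, hence $u$ is the unique viscosity solution of \eqref{1dpde}. For the final assertion, $h\equiv 0$ and $g\in C^1$ are hypotheses (ii)--(iii) of Proposition \ref{npdeclassical}, and its hypothesis (i) holds with $\varepsilon=2$ since $\sigma\sigma^T\equiv 2$; thus $v$ is $C^{1,2}$, and \eqref{nablaueps} (with $n=1$, $\varepsilon=2$, $G=0$) gives $|\nabla v(t,x)|\le \tfrac{1}{\sqrt2}\,A\,M\,e^{B(T-t)}$, so after time reversal $|u_x(t,x)|\le \tfrac{1}{\sqrt2}\,A\,M\,e^{Bt}$.

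The only step that is not pure bookkeeping is this last constant. Writing $M=\sqrt2\,\kappa$, where $\kappa$ is the uniform-norm Lipschitz constant of the Skorokhod reflection map on the interval $(c,d)$ that underlies the Dupuis--Ishii bound in Lemma \ref{lemmarefl}, the $\sqrt2$'s cancel and one gets $|u_x(t,x)|\le A\,\kappa\,e^{Bt}$; so the displayed bound $3A$ amounts to the claim that this reflection map is $3$-Lipschitz in the uniform norm. Pinning this constant down (and checking the arithmetic of the $\sqrt2$'s and the $\sqrt{n/\varepsilon}$ factor against the displayed ``$3$'') is the one point requiring a little care; everything else is a routine transcription of Propositions \ref{propnbsde}, \ref{nbsdepde}, \ref{unbsdepde} and \ref{npdeclassical} together with the time reversal, parallel to the proof of Corollary \ref{corsemiheat}.
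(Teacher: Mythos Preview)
Your proposal is correct and follows essentially the same approach as the paper: set $b\equiv 0$, $\sigma\equiv\sqrt2$, $\tilde g(y,z)=g(y,z/\sqrt2)$, verify (D1)--(D4) with $C=\sup|h|$ and $G=H=0$, invoke Lemma \ref{lemmarefl} for \eqref{condrefl}, then apply Propositions \ref{nbsdepde}, \ref{unbsdepde}, \ref{npdeclassical} in turn and reverse time. The paper resolves the one point you flag as needing care by citing Theorems 2.1 and 2.2 of Dupuis--Ishii \cite{DI} (together with Proposition \ref{lipschitzprop}) to obtain $M=3\sqrt2$, i.e.\ $\kappa=3$ in your notation, after which your arithmetic $\tfrac{1}{\sqrt2}\,A\,M\,e^{Bt}=3A\,e^{Bt}$ gives the stated gradient bound.
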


\begin{proof}
Set $b \equiv 0$, $\sigma \equiv \sqrt{2} \, Id$ and
$\tilde{g}(y,z) := g(y,z/\sqrt{2})$. Since $h$ is Lipschitz continuous and $[c,d]$ is compact, 
$h$ is bounded. Therefore, $\tilde{g}$ and $h$ satisfy {\rm (D1)--(D4)}
with $C = \sup_{c < x < d} |h(x)|$ and $G =H= 0$. So one obtains from
Proposition \ref{propnbsde} and Lemma \ref{lemmarefl} that the BSDE \eqref{nbsde} has a unique solution 
$(Y^{t,x},Z^{t,x})$ in $\bbS^{\infty}_t(\bbR) \times\bbH^\infty_t(\bbR^n)$ with
$|Y^{t,x}_s| \le (C+1) e^{D(T-s)}-1$. It can be seen from Theorems 2.1 and 2.2 of
Dupuis and Ishii \cite{DI} together with Proposition \ref{lipschitzprop} that condition \eqref{condrefl} is satisfied with 
$M = 3 \sqrt{2}$. Therefore, Proposition \ref{propnbsde} yields $|Z^{t,x}_s| \le 3 \sqrt{2} Ae^{B(T-s)}$.
By Proposition \ref{nbsdepde}, $v(t,x) := Y^{t,x}_t$ is a viscosity solution of the PDE
\beas
&& v_t + v_{xx}+g(v,v_x)=0 \quad \text{on } [0,T] \times (c,d)\\
&& v_x = 0 \quad \text{on } [0,T] \times \crl{c,d} 
\quad\text{and} \quad v(T,x)=h(x) \quad \text{for } x \in (c,d),
\eeas
satisfying $|v(t,x)| \le (C+1) e^{D(T-t)}-1$. So $u(t,x) := v(T-t,x)$ is a viscosity solution of \eqref{1dpde} 
with $|u(t,x)| \le (C+1) e^{Dt}-1$.
If $g$ is continuous in $y$, \eqref{addu} holds and
$F(t,x,y,v,S)= \sum_{i,j} S_{ij} -g(y,v)$ fulfills condition (ii) of Proposition \ref{unbsdepde},
then the conditions of Proposition \ref{unbsdepde} are satisfied. So $u$ is the unique viscosity solution.
If in addition, $h \equiv 0$ and $g$ is of class $C^1$, one obtains from Proposition \ref{npdeclassical}
that $u$ is of class $C^{1,2}$ and $|u_x(t,x)| \le 3A e^{Bt}$. 
\end{proof}


\begin{thebibliography}{25}
\bibitem{AB}{Amour, L. and Ben-Artzi, M. (1998). Global existence and
   decay for viscous Hamilton Jacobi equations. Nonlinear Analysis,
    Theory, Methods \& Applications 31(5-6), 621--628.} 

\bibitem{Barles}{Barles, G. (1999). Nonlinear Neumann boundary conditions for quasilinear 
degenerate elliptic equations and applications. J. Diff. Eq. 154(1), 191--224.}

\bibitem{BE}{Briand, P. and Elie, R. (2013). A simple constructive approach to quadratic BSDEs
with or without delay.
Stoch. Proc. Appl. 123, 2921--2939.}

\bibitem{BH}{Briand, P. and Hu, Y. (2006). BSDEs with quadratic growth and unbounded terminal value. 
Prob. Th. Rel. Fields 136(4), 604--618.}

\bibitem{BH2}{Briand, P. and Hu, Y. (2008). Quadratic BSDEs with convex generators and unbounded terminal conditions. Prob. Th. Rel. Fields 141(3-4), 543--567.}

\bibitem{CS}{Cheridito, P. and Stadje, M. (2012). Existence, minimality and approximation of solutions to BSDEs with convex drivers. Stoch. Proc. Appl. 122(4), 1540--1565.}

\bibitem{DP}{Darling, R.W.R. and Pardoux, E. (1997). Backwards SDEs with random terminal time and applications to semilinear elliptic PDE. Ann. Prob. 25(3), 1135--1159.}

\bibitem{DHB}{Delbaen, F., Hu, Y., and Bao, X. (2010). Backward SDEs with superquadratic growth. 
Prob. Th. Rel. Fields 150, 145--192.}

\bibitem{DHR}{Delbaen, F., Hu, Y., and Richou, A. (2011). On the uniqueness of solutions to quadratic BSDEs with convex generators and unbounded terminal conditions. 
Ann. Inst. Henri Poincar{\'e} Prob. et Stat. 47(2), 559--574.} 

\bibitem{DI}{Dupuis, P. and Ishii, H. (1991). On Lipschitz
continuity of the solution mapping to the Skorokhod problem, with
applications. Stochastics and Stochastic Reports, 35(1), 31--62.}

 \bibitem{ElKaroui}{El Karoui N., Peng S., and Quenez, M.C. (1997). 
Backward stochastic differential equations in finance. Math. Finance 7(1), 1--71.}

\bibitem{Gilding}{Gilding, B., Guedda, M., and Kersner, R. (2003). 
The Cauchy problem for $u_t=\triangle u + |\nabla u|^q$. 
J. Math. Analysis Appl. 284, 733--755.}

\bibitem{HNS}{Hu, Y., Nualart, D. and Song, X. (2011). Malliavin calculus for backward
stochastic differential equations and application to numerical solutions. Ann.
Appl. Prob. 21(6), 2379--2423.}

\bibitem{Ishii}{Ishii, H. and Lions, P.L. (1990) Viscosity solutions of fully
nonlinear second-order elliptic partial differential
equations. J. Diff. Eq. 83, 26--78.}

\bibitem{KS}{Karatzas, I. and Shreve, S.E. (1991). Brownian Motion and
Calculus. 2nd ed. Springer, New York.}

 \bibitem{Kobylanski}{Kobylanski, M. (2000). Backward stochastic
differential equations and partial differential equations with quadratic growth. 
Ann. Prob. 28(2), 558--602.}

\bibitem{Lady}{Lady{\v{z}}enskaja, O. A., Solonnikov, V. A., and Ural$'$ceva, N. N. (1968). Linear and Quasi-linear Equations of Parabolic Type. Translations of Mathematical Monographs, 23, 
American Mathematical Society, Providence, R.I.}



\bibitem{Nualart}{Nualart, D. (2006). The Malliavin Calculus and Related Topics. 2nd ed. Springer, Berlin.}


\bibitem{PP90}{Pardoux, E., Peng, S.(1990). Adapted solution of a backward stochastic differential equation. 
Syst. Control Lett. 14(1), 55--61.}

\bibitem{PP92}{Pardoux, E. and Peng, S. (1992). Backward stochastic
differential equations and quasilinear parabolic partial
differential equations. Lect. Notes Control Information Science 176, 200--217, Springer, Berlin.}

\bibitem{PZ}{Pardoux, E. and Zhang, S. (1998). Generalized BSDEs and nonlinear Neumann boundary value problems. Prob. Th. Rel. Fields 110, 535--558.}

\bibitem{Peng91}{Peng, S. (1991). Probabilistic interpretation for
systems of quasilinear parabolic partial differential
equations. Stochastics and Stochastics Reports 37, 61--74.}

\bibitem{Richou}{Richou, A. (2012). Markovian quadratic and superquadratic BSDEs with an unbounded terminal condition. Stoch. Proc. Appl. 122(9), 3173--3208.}

\bibitem{Saisho}{Saisho, Y. (1987). Stochastic differential equations for
multi-dimensional domain with reflecting boundary. Prob. Th. Rel. Fields, 74(3), 455--477.}
\end{thebibliography}
\end{document}